\definecolor{MyLinkColor}{rgb}{0,0,0.4}
\newcommand{\R}{{\mathbb R}}
\newcommand{\bA}{{\mathbb A}}
\newcommand{\bB}{\mathbb{B}}
\newcommand{\Z}{{\mathbb Z}}
\newcommand{\N}{{\mathbb N}}
\newcommand{\kH}{\mathcal{H}}
\newcommand{\kL}{\mathcal{L}}
\newcommand{\wt}{\widetilde}
\newcommand{\oo}{\overline\omega}
\newcommand{\re}{\mathop{\rm Re}\nolimits}
\newcommand{\PV}{\mathop{\rm PV}\nolimits}
\newcommand{\ov}{\overline}
\newcommand{\p}{\partial}
\newcommand{\e}{\varepsilon}
\newcommand{\id}{\mathop{\rm id}\nolimits}
\newcommand{\supp}{\mathop{\rm supp}\nolimits}
\newtheorem{thm}{Theorem}[section]
\newtheorem{lemma}[thm]{Lemma}
\theoremstyle{remark}
\numberwithin{equation}{section} 
\title[The Muskat problem with surface tension is subcritical  $L_p$-Sobolev spaces]{The Muskat problem with surface tension and equal viscosities in subcritical  $L_p$-Sobolev spaces}
\author{Anca-Voichita Matioc}
\address{Fakult\"at f\"ur Mathematik, Universit\"at Regensburg,   93040 Regensburg, Deutschland.}
\email{anca.matioc@ur.de}
\email{bogdan.matioc@ur.de}
\author{Bogdan--Vasile Matioc}
\subjclass[2010]{35R37; 76D27; 35K59}
\keywords{Muskat problem; Quasilinear parabolic evolution equation; Surface tension; Singular integral.}
\begin{document}

\begin{abstract}
In this paper we establish the well-posedness of the Muskat problem with surface tension and equal viscosities in the subcritical  Sobolev spaces $W^s_p(\mathbb{R})$, where ${p\in(1,2]}$ and~${s\in(1+1/p,2)}$.
This is achieved by showing that the mathematical model can be formulated as a quasilinear parabolic evolution problem  in  $W^{\overline{s}-2}_p(\mathbb{R})$, where~${\overline{s}\in(1+1/p,s)}$.  
Moreover, we prove that the solutions become instantly smooth and we provide a criterion for the global existence of solutions.
\end{abstract}
\maketitle

\section{Introduction}\label {Sec:0}

In this paper we study the evolution equation
\begin{subequations}\label{P}
\begin{equation}\label{P:1}
\p_tf(t,x)=\displaystyle\frac{k}{2\pi\mu}\PV\int_\R\frac{y+\p_xf(t,x)\delta_{[x,y]}f(t)}{ y^2+(\delta_{[x,y]}f(t))^2 }\p_x(\sigma\kappa(f)-\Delta_\rho f)(t,x-y)\, dy,
\end{equation}
which is defined for $ t>0$ and  $x\in\R$.
The function  $f$  is assumed to be known at time $t=0$, that is 
\begin{equation}\label{P:2}
f(0,\cdot)= f_0.
\end{equation}
\end{subequations}
The evolution problem \eqref{P} is the contour integral formulation of the Muskat problem with surface tension and with/without gravity effects, see \cite{MBV19, MBV18} for an equivalence proof
of~\eqref{P} to the classical formulation of the Muskat problem \cite{Mu34}.
The problem~\eqref{P} describes the two-dimensional motion of two fluids with equal viscosities $\mu_-=\mu_+=\mu$ and general densities $\rho_-$ and $\rho_+$ in a vertical/horizontal 
homogeneous porous medium which is identified with $\R^2$. 
The  fluids occupy the entire plane, they are separated by the sharp interface~${\{y=f(t,x)+Vt\}}$, and  they move with constant velocity~${(0,V)}$, where~${V\in\R}$.
The fluid denoted by $+$ is located above this moving interface.
We use~${g\in[0,\infty)}$ to denote  the Earth's gravity, $k>0$ is  the permeability of the homogeneous porous medium, and
$\sigma>0$ is the surface tension coefficient at the free boundary.
Moreover, to shorten the notation we have set $\Delta_\rho:=g(\rho_--\rho_+)\in\R$
and 
\[
\delta_{[x,y]} f:=f(x)-f(x-y),\qquad x,\, y\in\R.
\]
Finally, $\kappa(f(t))$ is the curvature of $\{y=f(t,x)+tV\}$ and
$\PV$ denotes the principal value.

The Muskat problem with surface tension has received much interest in the recent years. 
Besides the fundamental  well-posedness issue also other important important features like the stability of stationary solutions \cite{EEM09c, MBV20, EM11a,  EMM12a,  PS16x, PS16, FT03, MW20},
 parabolic smoothing properties~\cite{MBV19, MBV18, MBV20}, 
 the zero surface tension limit \cite{A14, NF20x}, and the degenerate limit when the thickness of the fluid layers (or a certain nondimensional parameter) vanishes \cite{EMM12, GS19}
 have been investigated in this context.
  We also refer to \cite{EsSi97, GGL20} for results on the Hele-Shaw problem with surface tension effects, which is the one-phase version of the Muskat problem,
 and to \cite{Tao97, PrS18, PSW19} for results on the related Verigin problem with surface tension.
 
Concerning the well-posedness of the Muskat problem with surface tension effects, this property has been 
investigated in bounded (layered) geometries in \cite{EM11a,  EMM12a,  EMW18, PS16x, PS16} where
   abstract parabolic theories have been employed in the analysis,
 the approach in \cite{HTY97} relies on Schauder's fixed-point theorem, and in \cite{BV11}
  the authors use  Schauder's fixed-point theorem in a setting which allows for a sharp  corner point of the initial geometry. 

The results on the Muskat problem with surface tension in the unbounded geometry considered in this paper (and possibly in the general case of  fluids with different viscosities) are more recent,
 cf. \cite{A14, To17, MBV19, MBV18, MBV20, Ngu20, NF20x}.
While in   \cite{A14, To17}  the initial  data are chosen from~${H^s(\mathbb{T})}$, with $s\geq 6$, the regularity of the initial data has been decreased in \cite{MBV19, MBV18, MBV20} to $H^{2+\e}(\R)$, with~$\e\in(0,1)$ 
arbitrarily small. 
Finally, the very recent references \cite{Ngu20, NF20x} consider the problem  with initial data in~$H^{1+\frac{d}{2}+\e}(\R^d)$, with $d\geq1$ and $\e>0$ arbitrarily small, covering  
all subcritical $L_2$-based Sobolev spaces in all dimensions.

It is the aim of this paper to study the Muskat problem~\eqref{P}  in the subcritical $L_p$-based Sobolev spaces  $W^s_p(\R)$ with $p\in(1,2]$ and $s\in(1+1/p,2)$.
This issue is new in the context of \eqref{P} (see \cite{AbMa20x, CGSV17} for results in the case when $\sigma=0$).
To motivate why $W^{1+1/p}_p(\R)$ is a critical space for \eqref{P} we first emphasize that the surface tension  is the dominant factor for the dynamics as it contains the highest spatial derivatives of $f$.
Besides, if we set $g=0$, then it is not difficult to show that if $f$ is a solution to \eqref{P:1}, then, given $\lambda>0$, the function  
\[
f_\lambda(t,x):=\lambda^{-1}f(\lambda^3  t,\lambda x)  
\]  
 also solves \eqref{P:1}. 
This scaling identifies  $W^{1+1/p}_p(\R)$ as a critical space for~\eqref{P}.
 The main result Theorem~\ref{MT1} establishes the well-posedness of \eqref{P} in  $W^s_p(\R)$.
 This is achieved by showing that \eqref{P} can be recast as a quasilinear parabolic evolution equation, so that abstract results available
  for this type of problems, cf. \cite{Am88, Am86, Am93, MW20}, can be applied in our context. 
  A particular feature of the Muskat problem \eqref{P} is the fact that the equations have to be interpreted in distributional 
  sense as they are realized in the Sobolev space~$W^{\ov s-2}_p(\R)$, where~$\ov s$ is chosen such that $1+1/p<\ov s<s<2$.
Additionally to well-posedness,  Theorem~\ref{MT1} provides two parabolic smoothing properties, showing in particular that~\eqref{P:1}  holds pointwise,
and a criterion  for the global existence of solutions.

\begin{thm}\label{MT1}
Let  $p\in(1,2]$ and $1+1/p<\ov s<s<2$.
Then, the Muskat  problem \eqref{P} possesses  for each   $f_0\in W^s_p(\R)$ a unique maximal solution $f:=f(\,\cdot\,; f_0)$ such that
\[ f\in {\rm C}([0,T^+), W^s_p(\R))\cap {\rm C}((0,T^+ ),  W^{\ov s+1}_p(\R))\cap {\rm C}^1((0,T^+),  W^{\ov s-2}_p(\R)), \] 
with $T^+=T^+(f_0)\in(0,\infty]$ denoting the maximal time of existence.
Moreover, the following properties hold true:
\begin{itemize}
\item[(i)] The solution depends continuously on the initial data;
\item[(ii)] Given $k\in\N$, we have $f\in {\rm C}^\infty((0,T^+ )\times\R,\R)\cap {\rm C}^\infty ((0,T^+),  W^k_p(\R));$
 \item[(iii)] The solution is global if 
\[
\sup_{[0,T^+)\cap[0,T]}\|f(t)\|_{ W^s_p(\R)}<\infty\qquad\text{for all $T>0$}.
\]
\end{itemize}
 \end{thm}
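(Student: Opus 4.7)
The strategy is to recast \eqref{P} as a quasilinear parabolic Cauchy problem
\[
\partial_t f = \Phi(f)[f], \qquad f(0) = f_0,
\]
in the base space $E_0 := W^{\ov s-2}_p(\R)$ with domain $E_1 := W^{\ov s+1}_p(\R)$, and then to verify the hypotheses of Amann's abstract existence theorem for quasilinear parabolic equations from \cite{Am86, Am88, Am93, MW20}. Since the linearization has principal symbol of order three, the continuous interpolation scale between $E_0$ and $E_1$ contains $W^s_p(\R)$ as an admissible intermediate space of initial data.

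I would first expand the integrand in \eqref{P:1} by writing
\[
\frac{1}{y^2 + (\delta_{[x,y]}f)^2} = \frac{1}{y^2}\cdot\frac{1}{1 + (\delta_{[x,y]}f/y)^2}
\]
and isolating the factor $h=\partial_x(\sigma\kappa(f)-\Delta_\rho f)$, so as to represent the right-hand side as a sum of multilinear singular integrals of the schematic form
\[
B_{n,m}(f)[a_1,\ldots,a_m][h](x) = \PV\int_\R \frac{h(x-y)}{y}\prod_{i=1}^{m}\frac{\delta_{[x,y]}a_i}{y}\Bigl(\frac{\delta_{[x,y]}f}{y}\Bigr)^{2n}\, dy,
\]
with $a_i\in\{f,\partial_x f\}$. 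The key technical step is to prove mapping properties of the operators $B_{n,m}$ on $L_p$-based Sobolev scales and to identify, within this sum, the principal highest-order part, which via the curvature carries the leading term $\sigma\partial_x^3 f$. The remainder is of lower order and defines a locally Lipschitz perturbation from $W^s_p(\R)$ into $\mathcal{L}(E_1,E_0)$.

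The linear operator $\Phi(f_0)$ must next be shown to generate a strongly continuous analytic semigroup on $E_0$ with domain $E_1$. Following a localization-and-freezing-of-coefficients argument as in \cite{MBV19, MW20}, near a point $x_0$ the operator reduces modulo lower order terms to a Fourier multiplier with symbol essentially $-c(x_0)|\xi|^3$, where $c(x_0)>0$ depends on $\partial_xf_0(x_0)$ and the physical constants; this is clearly sectorial. Patching local parametrices via a partition of unity and invoking standard perturbation theory yields $\Phi(f_0)\in\mathcal{H}(E_1,E_0)$ with $f\mapsto\Phi(f)$ locally Lipschitz from $W^s_p(\R)$ into $\mathcal{L}(E_1,E_0)$. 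The abstract quasilinear theorem then delivers the unique maximal strong solution together with continuous dependence on initial data (statement (i)). The smoothing assertion (ii) follows by a standard bootstrap: once $f(t)\in E_1$ for positive $t$, this improved regularity upgrades the coefficients of $\Phi(f)$; restarting the local existence theorem from a later initial time in a smoother intermediate space iteratively gains regularity, and combining with a time-translation/autonomy argument yields joint $C^\infty$ smoothness. The global criterion (iii) is a direct consequence of the blow-up alternative in the abstract theorem together with the local Lipschitz estimate for $\Phi$.

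The principal obstacle is the harmonic analysis needed to establish mapping properties of the operators $B_{n,m}$ in the $L_p$-setting. In the earlier $L_2$-based works \cite{MBV18, MBV19, MBV20} Plancherel is available; here one must rely instead on genuinely nonhilbertian tools (Coifman--McIntosh--Meyer-type commutator estimates, real-interpolation characterizations of $W^s_p(\R)$ as Besov spaces) and verify carefully that each term in the expansion respects the scaling dictated by the critical space $W^{1+1/p}_p(\R)$. A secondary technical difficulty is the uniform-in-localization verification of sectoriality of $\Phi(f_0)$, since the absence of a Hilbert structure for $p\neq 2$ precludes the spectral-calculus arguments employed in previous papers.
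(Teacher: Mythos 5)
Your high-level architecture matches the paper: recast \eqref{P} as $\partial_t f=\Phi(f)[f]$ in $E_0=W^{\ov s-2}_p(\R)$ with domain $E_1=W^{\ov s+1}_p(\R)$, establish the generator property by localization and freezing of coefficients, and then invoke the abstract quasilinear parabolic theory. However, there are several genuine gaps.

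First, your ``schematic form'' for the multilinear operators replaces the denominator $\prod_i\bigl[1+(\delta_{[x,y]}a_i/y)^2\bigr]$ by the Taylor factor $(\delta_{[x,y]}f/y)^{2n}$, i.e.\ by a geometric-series expansion of $(1+z^2)^{-1}$. That expansion converges only if $\|f'\|_\infty<1$, a small-slope hypothesis this theorem does \emph{not} make. The paper deliberately keeps the un-expanded denominator inside its $B_{n,m}$ operators precisely to avoid any slope restriction; the entire Lemma~\ref{L:MP1}--Lemma~\ref{L:MP3} machinery (and the duality trick $B_{n,m}^*=-B_{n,m}$) is built around that. Your decomposition as stated would fail for large data.

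Second, your bootstrap argument for the joint $C^\infty$ smoothing in (ii) does not close. The functional setting is only developed for exponents $s\in(1+1/p,2)$; once you know $f(t_1)\in W^{\ov s+1}_p(\R)$, restarting the local theory with this as new initial datum in any admissible $W^{\tilde s}_p(\R)$ with $\tilde s<2$ only produces $f(t)\in W^{\tilde{\ov s}+1}_p(\R)$ with $\tilde{\ov s}+1<3$, so the iteration stalls at $W^3_p(\R)$ and in particular never yields $W^k_p(\R)$ for all $k$, let alone joint $C^\infty((0,T^+)\times\R)$. The paper obtains (ii) by the \emph{parameter trick} (translation and time-scaling arguments, as in \cite{An90,ES96,PSS15,AC11} and \cite[Theorem 1.3]{MBV19}), which gains derivatives in $t$ and $x$ simultaneously without ever leaving the subcritical range.

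Third, calling (iii) ``a direct consequence of the blow-up alternative'' is an oversimplification, and uniqueness also requires more than you indicate. The abstract theorem delivers uniqueness only within a class of temporally H\"older-continuous solutions $C^\eta([0,T^+),W^{\ov s}_p(\R))$; the paper must separately prove (via Lemma~\ref{L:MBC}, which bounds $\Phi(f)[f]$ in $W^{s-3}_p(\R)$, together with interpolation) that any classical solution with the stated regularity is automatically in that H\"older class. Similarly, for (iii) the paper first establishes uniform continuity of $f$ into $W^{\ov s}_p(\R)$, then reruns the whole machinery in a strictly lower-regularity scale $W^{\wt s}_p(\R)$ and invokes (ii) to produce a contradiction; this is not merely reading off the abstract continuation criterion.
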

 
We emphasize that some of the  arguments   use in an essential way the fact that $p\in(1,2]$.
More precisely, we employ several times   the Sobolev embedding ${W^s_p(\R)\hookrightarrow W^{t}_{p'}(\R)},$
where $p'$ is the   adjoint exponent of $p$, that is $p^{-1}+{p'}^{-1}=1$ (this notation is used in the entire paper).
 This  provides the restriction $p\in(1,2]$.

Additionally, we expect that the assertion (ii) of Theorem~\ref{MT1} can be improved to  real-analyticity instead of smoothness. 
However,  this would require to establish real-analytic dependence of the right-hand side of \eqref{P:1} on $f$ in the functional analytic framework considered in Section~\ref{Sec:2}, 
which is much more involved than   showing the smooth dependence (see \cite[Proposition 5.1]{MBV19} for a related proof of real-analyticity).

\subsection*{Notation}
 Given $k\in\N$ and $p\in(1,\infty),$ we let $W^k_p(\R)$ denote the standard $L_p$-based Sobolev space with norm  
\[
\|f\|_{W^k_p}:=\Big(\sum_{\ell=0}^k\|f^{(\ell)}\|_p^p\Big)^{1/p}.
\]
 Given $0<s\not\in\N$ with $s=[s]+\{s\}$, where~${[s]\in\N}$ and  $\{s\}\in(0,1)$, the Sobolev space $W^s_p(\R)$ is the subspace of $W^{[s]}_p(\R)$ that consists of functions for which the seminorm
\[
[f]_{W^{s}_p}^p:=\int_{\R^2}\frac{|f^{([s])}(x)-f^{([s])}(y)|^p}{|x-y|^{1+\{s\}p}}\, d(x,y)=\int_{\R}\frac{\|f^{([s])} -\tau_\xi f^{([s])}\|_p^p}{|\xi|^{1+\{s\}p}}\, d\xi
\] 
is finite.
Here $\{\tau_\xi\}_{\xi\in\R}$ is the group of right translations and  $\|\cdot\|_q:=\|\cdot\|_{L_q(\R)}$,~$q\in[1,\infty].$
The norm on $W^s_p(\R)$ is defined by
\[
\|f\|_{W^s_p}:=\big(\|f\|_{W^{[s]}_p}^p+[f]_{W^{s}_p}^p\big)^{1/p}.
\]
For $s<0$, $W^s_p(\R)$ is defined as the dual space of $W^{-s}_{p'}(\R)$.
 
 The following properties can be found e.g. in \cite{Tr83}.
\begin{itemize}
\item[(i)] ${\rm C}^\infty_0(\R)$ lies dense in $W^s_p(\R)$ for all $s\in\R$. Moreover,  $W^s_p(\R)\hookrightarrow {\rm C}^{s-1/p}(\R)$  holds provided that $0<s-1/p\not\in\N$.
\item[(ii)] $W^s_p(\R)$ is an algebra for $s>1/p$.
\item[(iii)] If $\rho>\max\{s,  -s\},$ then $f\in{\rm C}^\rho(\R)$ is a pointwise  multiplier for $W^s_p(\R)$, that is
\begin{align}\label{adMP}
\|fg\|_{W^s_p}\leq C\|f\|_{{\rm C}^\rho}\| g\|_{W^s_p} \qquad \text{for all $g\in W^s_p(\R)$},
\end{align}
with $C$ independent of $f$ and $g$.
\item[(iv)] Given $\theta\in(0,1)$ and $p\in(1,\infty)$, let $(\cdot,\cdot)_{\theta,p}$ denote the real interpolation functor of exponent $\theta$ and parameter $p\in(1,\infty).$
Given $s_0,\, s_1\in\R$ with $(1-\theta)s_0+\theta s_1\not\in\Z$, it holds that
\begin{align}\label{IP}
(W^{s_0}_{p}(\R), W^{s_1}_{p}(\R))_{\theta,p}=W^{(1-\theta)s_0+\theta s_1}_{p}(\R).
\end{align}
\item[(v)]  $ W^s_p(\R)\hookrightarrow W^{t}_q(\R))$ if $1<p\leq q<\infty$ and $ s-1/p\geq t-1/q.$
\end{itemize} \medskip

Besides, we need also the following properties.
\begin{itemize}
\item[(a)]  Given $r\in[0,1)$ and $p\in(1,\infty)$,  there exists a constant $C>0$ such that
\begin{align}\label{MES}
 \|gh\|_{W^{r}_p}\leq  2\|g\|_\infty\|h\|_{W^{r}_p}+C\|g\|_{W^{r+1}_p}\|h\|_p,\quad\text{$g\in W^{r+1}_p(\R)$, $h\in W^{r}_p(\R)$.}
\end{align} 
\item[(b)] Given $r\in(1/p,1) $ and $p\in(1,\infty)$,  there exists a constant $C>0$ such that 
 \begin{align}\label{MES2}
 \|gh\|_{W^{r}_p}\leq  2(\|g\|_\infty\|h\|_{W^{r}_p}+\|h\|_\infty\|g\|_{W^{r}_p}),\quad\text{$g,\, h\in W^{r}_p(\R)$.}
\end{align}
\item[(c)] Given $p\in(1,2]$, $r\in(1/p,1)$, and $\rho\in(0,\min\{r-1/p,1-r\}),$ there exists a constant~$C>0$ such that 
\begin{align}\label{EQ:Mul1}
\|gh\|_{W^{1-r}_{p'}}\leq 2\|g\|_\infty \|h\|_{W^{1-r}_{p'}}+C\|g\|_{W^{r}_p}\|h\|_{W^{1-r-\rho}_{p'}} 
\end{align}
for all $g\in W^{r}_p(\R)$ and $h\in W^{1-r}_{p'}(\R),$ and
\begin{align}\label{EQ:Mul2}
\|\varphi h\|_{W^{r-1}_p}\leq 5\|\varphi\|_\infty \|h\|_{W^{r-1}_p}+C\frac{ \|\varphi\|_{W^{r}_{p}}^{1+2r/\rho}}{\|\varphi\|_{\infty}^{2r/\rho}}\|h\|_{W^{r-1-\rho}_p} 
\end{align}
for all $h\in W^{r-1}_p(\R) $ and  $0\neq \varphi\in W^r_p(\R)$.
\end{itemize}
The estimates \eqref{MES} and \eqref{MES2} are straightforward consequences of the properties (i)-(v) listed above.
The inequalities~\eqref{EQ:Mul1} and~\eqref{EQ:Mul2} are established in Appendix~\ref{Sec:b}.
Let us point out that~\eqref{EQ:Mul1} implies in particular that the multiplications
\begin{equation}\label{mult}
\begin{aligned}
&[(g,h)\mapsto gh]: W^{r}_{p}(\R)\times W^{1-r}_{p'}(\R)\to W^{1-r}_{p'}(\R),\\[1ex] 
& [(g,h)\mapsto gh]: W^{r}_{p}(\R)\times W^{r-1}_{p}(\R)\to W^{r-1}_{p}(\R) 
\end{aligned}
\end{equation}
are both  continuous if $p\in(1,2]$ and $r\in(1/p,1)$.
The continuity of $\eqref {mult}_1$ is a straightforward consequence of \eqref{EQ:Mul1}, while  the continuity of $\eqref {mult}_2$ follows by a standard duality argument.
\subsection*{Outline} In Section~\ref{Sec:1} we introduce some multilinear singular integral operators and study their properties. 
These operators are then used in Section~\ref{Sec:2} to formulate~\eqref{P} as a quasilinear evolution problem, cf.~\eqref{AF}-\eqref{PHI}. 
Subsequently, we show in Theorem~\ref{T:GP} that~\eqref{AF} is of parabolic type and we complete the section with the proof of Theorem~\ref{MT1}.
In Appendix~\ref{Sec:A} and Appendix~\ref{Sec:b} we prove some technical results used in the analysis.

\section{Some singular integral operators}\label {Sec:1}

In this section we investigate a family of multilinear singular integral operators which play a key role in the analysis of the Muskat problem (and also of the Stokes problem~\cite{MP20x}).
Given~${n,\,m\in\N}$ and  Lipschitz continuous  functions ${a_1,\ldots, a_{m},\, b_1, \ldots, b_n:\mathbb{R}\to\mathbb{R}}$  we set
\begin{equation}\label{BNM}
B_{n,m}(a_1,\ldots, a_m)[b_1,\ldots,b_n,\oo](x):=\PV\int_\mathbb{R}  \frac{\oo(x-y)}{y}\cfrac{\prod_{i=1}^{n}\big(\delta_{[x,y]} b_i /y\big)}{\prod_{i=1}^{m}\big[1+\big(\delta_{[x,y]}  a_i /y\big)^2\big]}\, dy.
\end{equation}
For brevity  we  write
\begin{equation}\label{defB0}
B^0_{n,m}(f)[\oo]:=B_{n,m}(f,\ldots f)[f,\ldots,f,\oo].
\end{equation}
The relevance of these operators in the context of~\eqref{P}  is enlightened by the fact that~\eqref{P:1}  can be recast, at least at formal level, in a compact form as
\begin{equation}\label{P'}
  \cfrac{d f(t)}{dt}=\frac{k}{2\mu}\bB(f(t))[(\sigma\kappa(f(t))-\Delta_\rho f(t))'],\quad t>0,\\[2ex]
\end{equation}
where~$\bB(f)$ is  defined by
\begin{align}
\bB(f)&:= \pi^{-1}(B_{0,1}^0(f)+f'B_{1,1}^0(f)) \label{OpB} 
\end{align}
and $f':=df/dx$.
A key observation that we exploit in our analysis is the quasilinear structure of the curvature operator.
Indeed, it holds that
\[
\kappa(f)=\kappa(f)[f],
\]
where
\begin{align}\label{qlk}
\kappa(f)[h]:=\frac{h''}{(1+f'^2)^{3/2}}.
\end{align}
Given $f\in W^s_p(\R)$ and $h\in W^{s+1}_p(\R)$, with $p\in(1,\infty)$ and $s\in(1+1/p,2),$   Lemma~\ref{L:1} ensures that the term
\[
(\sigma\kappa(f)[h]-\Delta_\rho h)',
\]
which corresponds to the linear variable in \eqref{P'}, belongs to $W^{s-2}_p(\R)$.
Therefore, it is natural  to ask weather $\bB(f)\in\kL(W^{s-2}_p(\R))$.  
The proof of this boundedness property, which enables us to view \eqref{P'} as an evolution equation in~${W^{s-2}_p(\R)}$, see Section~\ref{Sec:2}, is the main goal of this section.
As already mentioned, some of the arguments require     $p\in(1,2]$.
 \medskip

We first recall the following result.

\begin{lemma}\label{L:MP0} Let $p\in(1,\infty)$, $n,\,m \in\N$, and let $a_1,\ldots, a_{m},\, b_1, \ldots, b_n:\R\to\R$ be Lipschitz continuous.
Then, there exists a constant  $C=C(n,\, m,\,\max_{i=1,\ldots, m}\|a_i'\|_{\infty} )$
such that
\[
\|B_{n,m}(a_1,\ldots, a_m)[b_1,\ldots,b_n,\,\cdot\,]\|_{\kL(L_p(\R))}\leq C\prod_{i=1}^{n} \|b_i'\|_{\infty}.
\]
 Moreover\footnote{Given $ n\in\N$ and Banach spaces $X$ and $Y$, we let $\kL^n_{\rm sym}(X,Y)$ denote the space of $n$-linear, bounded symmetric maps $A:\;X^n\longrightarrow Y$.},   $B_{n,m}\in {\rm C}^{1-}((W^1_\infty(\R))^{m},\kL^n_{\rm sym}(W^1_\infty(\R),\kL( L_p(\R))).$\\[-2ex]
\end{lemma}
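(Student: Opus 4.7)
The plan is to reduce the statement to the classical Coifman--McIntosh--Meyer (CCMM) theory of singular integrals with kernels involving Lipschitz graphs.

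For the norm bound, I would view $\omega \mapsto B_{n,m}(a_1,\ldots,a_m)[b_1,\ldots,b_n,\omega]$ as a Calderón--Zygmund operator with odd kernel
$$K(x,y) = \frac{1}{y}\cdot\frac{\prod_{j=1}^n (\delta_{[x,y]}b_j/y)}{\prod_{i=1}^m [1+(\delta_{[x,y]}a_i/y)^2]},$$
of size $|K(x,y)| \leq \prod_j\|b_j'\|_\infty/|y|$ (the denominator being $\geq 1$) and satisfying the standard first-derivative estimate in $x$ controlled only by $\max_i\|a_i'\|_\infty$ and $\prod_j\|b_j'\|_\infty$. One concrete reduction to CCMM is to expand each factor $[1+z^2]^{-1}$ as a Neumann series where it converges, treating the complementary region by the pointwise bound, so that $B_{n,m}$ becomes an absolutely convergent sum of multilinear Calderón commutators
$$\PV\int_\R \frac{\omega(x-y)}{y}\prod_\ell (\delta_{[x,y]}g_\ell/y)\,dy,$$
each bounded on $L_p$ by CCMM with norm $\lesssim \prod_\ell\|g_\ell'\|_\infty$. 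Equivalently, one may perform a direct $T(1)$-verification on the original $K$. Either route, after bookkeeping of constants, yields the bound $C(n,m,\max\|a_i'\|_\infty)\prod_j\|b_j'\|_\infty$.

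For the continuity statement $B_{n,m}\in \mathrm{C}^{1-}$, the multilinearity in the $b_j$'s is automatic from the definition (and the symmetrization is a simple averaging). The delicate part is the locally Lipschitz dependence on $(a_1,\ldots,a_m)\in (W^1_\infty(\R))^m$. Here I would use the telescoping identity
$$\frac{1}{1+A_i^2} - \frac{1}{1+\widetilde A_i^2} = -\frac{(A_i-\widetilde A_i)(A_i+\widetilde A_i)}{(1+A_i^2)(1+\widetilde A_i^2)},\qquad A_i := \delta_{[x,y]}a_i/y,$$
applied one denominator factor at a time, which rewrites $B_{n,m}(a) - B_{n,m}(\widetilde a)$ as a finite sum of operators of the form $B_{n',m'}$ (with $n'>n$, $m'\geq m$) carrying the extra numerator factor $\delta_{[x,y]}(a_i - \widetilde a_i)/y$. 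Applying the first part to each piece produces the desired local Lipschitz estimate in the $W^1_\infty$-norms of the $a_i$'s.

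The principal obstacle is the $L_p$-boundedness of the polynomial-kernel commutators that appear, which is the substantive content of CCMM; everything else is algebraic. The subtle structural point, essential for subsequent use in this paper, is that the constants depend only on $\max_i\|a_i'\|_\infty$ (not on higher Sobolev norms of the $a_i$'s) and linearly on each $\|b_j'\|_\infty$, which has to be tracked carefully through the Neumann expansion and telescoping.
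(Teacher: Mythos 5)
The paper does not give its own proof of this lemma; it simply cites \cite[Lemma~2.2]{AbMa20x}. Your general strategy --- treat $B_{n,m}$ as a Calder\'on--Zygmund operator and reduce to Calder\'on commutators, proving the local Lipschitz dependence on the $a_i$'s by telescoping the denominators --- is the standard route, and your telescoping identity is literally the paper's own formula~\eqref{spr3}, so that half of your plan is sound.

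However, the Neumann-series reduction as you have written it has a genuine gap. The expansion $(1+z^2)^{-1}=\sum_k(-1)^k z^{2k}$ converges only for $|z|<1$, i.e.\ on the set of $(x,y)$ where $|\delta_{[x,y]}a_i/y|<1$, and this set is controlled by the $a_i$'s, not by $x$ or $y$ alone. More to the point, ``treating the complementary region by the pointwise bound'' cannot work: the pointwise bound $|K(x,y)|\lesssim 1/|y|$ is not locally integrable and by itself never yields an $L_p$-operator bound, so the split gives no estimate whenever $\max_i\|a_i'\|_\infty\geq 1$. The correct version of the idea is to renormalize before expanding: with $M:=\max_i\|a_i'\|_\infty$ one writes, for real $z$ with $|z|\leq M$,
\[
\frac{1}{1+z^2}=\frac{1}{1+M^2}\,\frac{1}{1-\dfrac{M^2-z^2}{1+M^2}}
=\frac{1}{1+M^2}\sum_{k\geq 0}\Big(\frac{M^2-z^2}{1+M^2}\Big)^k,
\]
which converges uniformly because $0\leq (M^2-z^2)/(1+M^2)\leq M^2/(1+M^2)<1$. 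This turns $B_{n,m}$ into an absolutely convergent sum of multilinear Calder\'on commutators, and the summability is nontrivial: one must use the fact that the CCMM/Christ--Journ\'e bound for the $k$-th order commutator grows only polynomially in $k$, so that the polynomial factor is beaten by the geometric decay $(M^2/(1+M^2))^k$. You need to say this explicitly, since it is precisely the mechanism that produces a constant depending only on $\max_i\|a_i'\|_\infty$. Finally, ``perform a direct $T(1)$-verification'' is not a viable shortcut here: the kernel is antisymmetric, so weak boundedness and $T^{\ast}(1)$ are free, but checking $T(1)\in\mathrm{BMO}$ for these kernels is essentially the full content of the CCMM theorem and would itself need the expansion you are trying to avoid.
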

\begin{proof}
See \cite[Lemma 2.2]{AbMa20x}.
\end{proof}

We point out that, given     Lipschitz continuous functions $ a_1,\ldots,  a_m, \,\wt a_1,\ldots, \wt a_m,\, b_1, \ldots, b_n$, we have 
\begin{equation}\label{spr3}   
   \begin{aligned}
 &B_{n,m}(\wt a_{1}, \ldots, \wt a_{m})[b_1,\ldots, b_{n},\cdot]-  B_{n,m}(a_1, \ldots, a_{m})[b_1,\ldots, b_{n},\cdot]\\[1ex]
 &\hspace{0.5cm}=\sum_{i=1}^{m} B_{n+2,m+1}(\wt a_{1},\ldots, \wt a_{i},a_i,\ldots \ldots, a_{m})[b_1,\ldots, b_{n},a_i+\wt a_{i}, a_i-\wt a_{i},\cdot].
\end{aligned}
\end{equation}
The formula \eqref{spr3} was used to establish the Lipschitz continuity property  (denoted by   ${\rm C}^{1-}$) in Lemma~\ref{L:MP0} and is  also of importance for our later analysis.

The strategy is as follows. 
In Lemma~\ref{L:MP2} we show  that, given $f\in W^s_p(\R)$, with $p\in(1,2]$ and $s\in(1+1/p,2)$, we have $B_{n,m}^0(f)\in \kL(W^{r}_{p'}(\R)) $ for all $r\in[0,1-1/p)$ (in particular also for $r=2-s$).
Lemma~\ref{L:MP1} below provides the key argument in the proof of Lemma~\ref{L:MP2}. 
The desired mapping property $B_{n,m}^0(f)\in \kL(W^{s-2}_p(\R))$ stated in Lemma~\ref{L:MP3},  follows then from Lemma~\ref{L:MP2} via a duality argument.
Lemma~\ref{L:MP3} and the fact that $f'\in W^{s-1}_p(\R)$ is a pointwise multiplier for $W^{s-2}_p(\R)$, see $\eqref{mult}$, provide the desired property ${\bB(f)\in\kL(W^{s-2}_p(\R))}$.

\begin{lemma}\label{L:MP1}
Given    $p\in(1,2]$, $s\in(1 +1/p,2)$, $r\in(0, 1-1/p)$,  
 $n,\, m\in\N $, $n\geq1$, and   ${a_1,\ldots, a_m \in W^s_p(\R)}$, 
there exists a constant~$C=C(n,\, m,\, s,\,r,\,p,\,\max_{1\leq i\leq m}\|a_i\|_{W^s_p})$  such that
\begin{align} 
\| B_{n,m}(a_1,\ldots, a_{m})[b_1,\ldots, b_n,\oo]\|_{p'}\leq C \|b_1 \|_{W^{s+1-r-2/p}_{p'}}\|\oo\|_{W^{r}_{p'}}\prod_{i=2}^{n}\|b_i\|_{W^s_p} \label{REF1}
\end{align}
for all $ b_1,\ldots, b_n\in W^s_p(\R)$ and $\oo\in W^{r}_{p'}(\R).$
\end{lemma}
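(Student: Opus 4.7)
The estimate refines the $L_{p'}$-boundedness of Lemma~\ref{L:MP0} by exchanging $r$ units of regularity between $b_1$ (loss) and $\omega$ (gain), all measured in the $L_{p'}$-scale. The key ingredients are the fundamental theorem of calculus applied to $\delta_{[x,y]}b_1/y$, the translation-difference characterization of the Sobolev--Slobodeckij norms, and the embeddings $W^s_p(\R)\hookrightarrow W^{s+1-2/p}_{p'}(\R)\cap {\rm C}^1(\R)$ (the second because $s-1/p>1$, the first because $p\in(1,2]$).

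Since $a_j,\,b_i\in W^s_p\hookrightarrow {\rm C}^1$ for $i\geq 2$, the ratios $\delta_{[x,y]}a_j/y$ and $\delta_{[x,y]}b_i/y$ are uniformly bounded by $C\|a_j\|_{W^s_p}$ and $C\|b_i\|_{W^s_p}$. Collecting these together with the bounded denominator into a symbol $\Phi(x,y)$ with $\|\Phi\|_\infty\leq C\prod_{i\geq 2}\|b_i\|_{W^s_p}$, the task reduces to bounding
\[
\Bigl\|\,\PV\!\int_\R \omega(x-y)\,\frac{\delta_{[x,y]}b_1}{y^2}\,\Phi(x,y)\,dy\,\Bigr\|_{p'}
\]
by $C\|b_1\|_{W^{s+1-r-2/p}_{p'}}\|\omega\|_{W^r_{p'}}$.

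Next, I write $\delta_{[x,y]}b_1/y=\int_0^1 b_1'(x-\tau y)\,d\tau$ and apply Minkowski's inequality in $\tau$ to reduce further to a $\tau$-uniform bound on the $L_{p'}$-norm of $\PV\!\int \omega(x-y)\,b_1'(x-\tau y)\,\Phi(x,y)\,y^{-1}\,dy$. To expose the PV cancellation, I decompose
\[
\omega(x-y)b_1'(x-\tau y) = \omega(x)b_1'(x)+[\omega(x-y)-\omega(x)]\,b_1'(x-\tau y)+\omega(x)\,[b_1'(x-\tau y)-b_1'(x)].
\]
The diagonal term produces $\omega(x)b_1'(x)\,\PV\!\int\Phi(x,y)\,y^{-1}\,dy$, which I control by the multiplication property $\eqref{mult}_1$ together with the bounded $x$-multiplier arising from the $\PV$-integral. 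For each of the two commutator pieces, Minkowski in $L_{p'}$ passes the norm inside the $y$-integral, yielding integrands of the form $\|\tau_y\omega-\omega\|_{p'}/|y|$ or $\|\tau_{\tau y}b_1'-b_1'\|_{p'}/|y|$; the Gagliardo characterizations of $W^r_{p'}$ (with $r\in(0,1-1/p)\subset(0,1)$) and of $W^{s-r-2/p}_{p'}$ (with $s-r-2/p\in(0,1)$ throughout the prescribed parameter range) then control these $y$-integrals near the origin, while the $L_{p'}$-norms of $\omega$ and $b_1'$ take care of the contributions at infinity.

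The main obstacle is the cancellation step: the splitting must be carried out so that each commutator piece is locally integrable in $y$ near zero via the translation-difference bound, while simultaneously the diagonal piece remains controlled by the multiplier estimate $\eqref{mult}_1$. The hypothesis $p\in(1,2]$ is used essentially both here (through $\eqref{mult}_1$, cf.\ the comment following \eqref{mult}) and through the Sobolev embedding $W^s_p\hookrightarrow W^{s+1-2/p}_{p'}$ that places all functions on a common $L_{p'}$-scale.
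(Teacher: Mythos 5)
Your decomposition is genuinely different from the paper's, but it has a gap at $n=1$ (and is vague at a second point). The paper integrates by parts in $y$ using $\partial_y(\delta_{[x,y]}b_1/y)=b_1'(x-y)/y-\delta_{[x,y]}b_1/y^2$, producing four terms, one of which is $\omega(x)\,B_{n-1,m}(a_1,\ldots,a_m)[b_2,\ldots,b_n,b_1'](x)$ — crucially, the singular integral operator remains \emph{applied to} $b_1'$, so its $L_{1/r}$-boundedness follows from Lemma~\ref{L:MP0}. You instead freeze the values at $y=0$ and work with a commutator-type split $\omega(x-y)b_1'(x-\tau y)=\omega(x)b_1'(x)+[\omega(x-y)-\omega(x)]b_1'(x-\tau y)+\omega(x)[b_1'(x-\tau y)-b_1'(x)]$, placing all the burden on the two difference terms.

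The concrete gap is at large $|y|$. Replacing $\delta_{[x,y]}b_1/y^2$ by $y^{-1}\int_0^1 b_1'(x-\tau y)\,d\tau$ via the fundamental theorem of calculus is sharp near $y=0$ but throws away the $O(\|b_1\|_\infty/y^2)$ decay of the original kernel at $|y|\to\infty$. After taking Minkowski in $\tau$ and then in $y$, the two commutator pieces produce integrands $\lesssim \|\omega-\tau_y\omega\|_{p'}/|y|$ and $\lesssim \|b_1'-\tau_{\tau y}b_1'\|_{1/r}/|y|$; near the origin the Gagliardo seminorms absorb these (this part of your plan is fine, and the parameter check $s-r-2/p\in(0,1)$ is correct). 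But for $|y|>1$ your only available bound is $2\|\omega\|_{p'}/|y|$ resp.\ $2\|b_1'\|_{1/r}/|y|$, and $\int_{|y|>1}|y|^{-1}\,dy=\infty$. Extra decay must come from $\Phi(x,y)$: for $n\geq 2$ one has $\|\Phi(\cdot,y)\|_\infty\lesssim \|b_2\|_\infty/|y|$ for $|y|>1$ and the integrals converge, but for $n=1$ we have $\Phi(x,y)\to 1$ as $|y|\to\infty$ and no such help exists. So your argument breaks exactly in the case $n=1$ — which is needed in the application, since $B^0_{1,1}$ occurs in $\mathbb{B}(f)$. The paper avoids this by not applying the FTC globally: it keeps $\delta_{[x,y]}b_1/y$ (bounded by $2\|b_1\|_\infty/|y|$ for $|y|>1$) in the error kernel $K$, and packages the leading contribution as a full singular integral operator acting on $b_1'$ rather than freezing $b_1'$ at $x$. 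To repair your proof you would at minimum need to split $|y|<1$ and $|y|>1$ before invoking the FTC, retaining the original $y^{-2}$ behaviour at large $|y|$.

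A secondary slip: for the diagonal term $I_1=\omega(x)b_1'(x)\,\PV\!\int\Phi(x,y)y^{-1}\,dy$, you invoke the multiplication property $\eqref{mult}_1$, but that map is $W^r_p(\R)\times W^{1-r}_{p'}(\R)\to W^{1-r}_{p'}(\R)$, which does not match your factors $\omega\in W^r_{p'}(\R)$ and $b_1'\in W^{s-r-2/p}_{p'}(\R)$ with target $L_{p'}(\R)$. What actually works here (and is what the paper uses for its analogous term) is a simple H\"older split $\|\omega b_1'\|_{p'}\leq\|\omega\|_q\|b_1'\|_{1/r}$ with $1/q+r=1/p'$, combined with the embeddings $W^r_{p'}(\R)\hookrightarrow L_q(\R)$ and $W^{s-r-2/p}_{p'}(\R)\hookrightarrow L_{1/r}(\R)$; the latter needs $s\geq 1+1/p$, which holds. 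You should also justify that $\PV\!\int\Phi(x,y)y^{-1}\,dy$ is bounded uniformly in $x$ with a constant controlled by $\prod_{i\geq 2}\|b_i\|_{W^s_p}$; this is plausible via the $C^{s-1-1/p}$-H\"older regularity of the $b_i'$, $a_i'$ near $y=0$ and the decay of $\Phi$ (or of $\Phi-1$ when $n=1$) at $|y|\to\infty$, but it is not automatic.
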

\begin{proof}
Without loss of generality  we may assume that $\oo\in {\rm C}^\infty_0(\R).$ 
Using the relation
\[
\frac{\p}{\p y}\Big(\frac{\delta_{[x,y]} b_1}{y}\Big)=\frac{ b_1'(x-y)}{y}- \frac{\delta_{[x,y]} b_1}{y^2},
\]
algebraic manipulations lead us to
\begin{align*}
&\hspace{-0.5cm}B_{n,m}(a_1,\ldots, a_{m})[b_1,\ldots, b_n,\oo](x)\\[1ex]
&=\big(\oo B_{n-1,m}(a_1,\ldots, a_{m})[b_2,\ldots, b_n,b_1']\big)(x)+\int_\R K(x,y)\, dy\\[1ex]
&\hspace{0,45cm}-\oo(x)\int_\R \cfrac{\prod_{i=2 }^{n}\big(\delta_{[x,y]} b_i /y\big)}{\prod_{i=1}^{m}\big[1+\big(\delta_{[x,y]}  a_i /y\big)^2\big]}  \frac{\p}{\p y}\Big(\frac{\delta_{[x,y]} b_1 }{y}\Big)\, dy
\end{align*}
for $x\in\R$, where 
\[
K(x,y):=-\cfrac{\prod_{i=1 }^{n}\big(\delta_{[x,y]} b_i /y\big)}{\prod_{i=1}^{m}\big[1+\big(\delta_{[x,y]}  a_i /y\big)^2\big]}  \frac{\delta_{[x,y]} \oo }{y},\qquad\text{$x\in\R$, $y\neq 0$.}
\]
The last term on the right side of the previous identity vanishes if ${(n-1)^2+m^2=0}$. 
Otherwise, we use integration  by parts and arrive at 
\begin{equation}\label{RB}
\begin{aligned}
&\hspace{-0.5cm}B_{n,m}(a_1,\ldots, a_{m})[b_1,\ldots, b_n,\oo](x)\\[1ex]
&=\big(\oo B_{n-1,m}(a_1,\ldots, a_{m})[b_2,\ldots, b_n,b_1']\big)(x)+\int_\R K(x,y)\, dy\\[1ex]
&\hspace{0.445cm}-\oo(x)\sum_{j=2}^n\int_\R  K_{1,j}(x,y)\, dy+\oo(x)\sum_{j=1}^m\int_\R K_{2,j}(x,y)\, dy,
\end{aligned}
\end{equation}
where, given $x\in\R$ and $y\neq 0$, we set
\begin{align*}
 K_{1,j}(x,y)&:= \cfrac{\prod_{i=1, i\neq j }^{n}\big(\delta_{[x,y]} b_i /y\big)}{\prod_{i=1}^{m}\big[1+\big(\delta_{[x,y]}  a_i /y\big)^2\big]}\frac{\delta_{[x,y]}b_j-yb_j'(x-y)}{y^2},\\[1ex]
 K_{2,j}(x,y)&:= 2\cfrac{ \delta_{[x,y]} a_j /y}{1+\big(\delta_{[x,y]}  a_j /y\big)^2}
 \cfrac{\prod_{i=1 }^{n}\big(\delta_{[x,y]} b_i /y\big)}{\prod_{i=1}^{m}\big[1+\big(\delta_{[x,y]}  a_i /y\big)^2\big]}
  \frac{\delta_{[x,y]}a_j-ya_j'(x-y)}{y^2}.
\end{align*}
We estimate the $L_{p'}$-norm of the four terms on the right of \eqref{RB} separately.
 Let $q\in( p',\infty)$ be defined as the solution to
\[
\frac{1}{q}+\frac{1}{1/r}=\frac{1}{p'}.
\]

\noindent{\em Term 1.} We note that ${\oo\in W^{r}_{p'}(\R)\hookrightarrow L_q(\R)}$,
  ${b_1'\in W^{s-2/p}_{p'}(\R)\hookrightarrow W^{s-r-2/p}_{p'}(\R)}$, and additionally we have~$W^{s-r-2/p}_{p'}(\R)\hookrightarrow L_{1/r}(\R) $.
Hölder's inequality together with Lemma~\ref{L:MP0} (with $p=1/r$) then yields
\begin{equation}\label{DE0}
\begin{aligned}
\|\oo B_{n-1,m}(a_1,\ldots, a_{m})[b_2,\ldots, b_n,b_1']\|_{p'}&\leq \|\oo\|_q\|B_{n-1,m}(a_1,\ldots, a_{m})[b_2,\ldots, b_n,b_1']\|_{1/r}\\[-1ex]
&\leq  C\|\oo\|_{q}\|b_1'\|_{1/r}\Big(\prod_{i=2}^{n}\|b_i'\|_{\infty}\Big)\\[-1ex]
&\leq C\|\oo\|_{W^{r}_{p'}}\|b_1 \|_{W^{s+1-r-2/p}_{p'}}\Big(\prod_{i=2}^{n}\|b_i\|_{W^s_p}\Big).
\end{aligned}
\end{equation}

\noindent{\em Term 2.}
Let $s_0\in (1+1/p, s]$ be chosen such that $s_0-r-1/p<1$.  Minkowski's integral inequality, H\"older's inequality,  and  the property $b_1\in  W^{s+1-r-2/p}_{p'}(\R)\hookrightarrow {\rm C}^{s_0-r-1/p}(\R)$ lead~to
\begin{align*}
\Big(\int_\R\Big|\int_\R K(x,y)\, dy  \Big|^{p'}\, dx\Big)^{1/p'} &\leq\Big(\int_{\{|y|<1\}}+\int_{\{|y|>1\}}\Big)\Big(\int_\R |K(x,y)|^{p'}\, dx  \Big)^{1/p'}\, dy\\[1ex]
 &\leq 4\|b_1\|_\infty \Big(\prod_{i=2}^n \|b_i'\|_\infty\Big)\|\oo\|_{p'}\\[1ex]
&\hspace{0,45cm}  +  [b_1]_{ s_0-r-1/p}\Big(\prod_{i=2}^n \|b_i'\|_\infty\Big)\int_{\{|y|<1\}}\frac{\|\oo-\tau_y\oo\|_{p'}}{|y|^{2- s_0+r+1/p}}\, dy,
\end{align*}
where
\begin{align*}
\int_{\{|y|<1\}}\frac{\|\oo-\tau_y\oo\|_{p'}}{|y|^{2- s_0+r+1/p}}\, dy &\leq  \|\oo\|_{W^{r}_{p'}}\Big(\int_{\{|y|<1\}}\frac{1}{|y|^{(1-s_0+2/p)p}}\, dy\Big)^{1/p}\leq C\|\oo\|_{W^{r}_{p'}}.
\end{align*}
We arrive at
\begin{align}\label{DE:4}
\Big(\int_\R\Big|\int_\R K(x,y)\, dy  \Big|^{p'}\, dx\Big)^{1/p'} \leq C\|\oo\|_{W^{r}_{p'}} \|b_1\|_{W^{s+1-r-2/p}_{p'}}  \Big(\prod_{i=2}^{n}\|b_i\|_{W^s_p}\Big).
\end{align}

\noindent{\em Terms 3 $\&$ 4.} Given $2\leq j\leq n$,    Hölder inequality  and Minkowski's  integral inequality yield
\begin{equation*}
\begin{aligned}
 \Big(\int_\R\Big|\oo(x)\int_\R K_{1,j}(x,y)\, dy\Big|^{p'}\, dx\Big)^{1/p'}&\leq\|\oo\|_q \Big(\int_\R\Big|\int_\R K_{1,j}(x,y)\, dy\Big|^{\frac{1}{r}}\, dx\Big)^{r}\\[1ex]
&\leq \|\oo\|_{q}\int_\R\Big(\int_\R |K_{1,j}(x,y)|^{\frac{1}{r}}\, dx\Big)^{r}\, dy.
\end{aligned}
\end{equation*} 
To estimate the integral term we choose again  $s_0\in (1+1/p, s]$ such that $s_0-r-1/p<1$. 
Taking into account that $b_1\in  W^{s+1-r-2/p}_{p'}(\R)\hookrightarrow W^1_{1/r}(\R)\cap {\rm C}^{s_0-r-1/p}(\R)$, we get
\begin{align*}
&\hspace{-0.25cm}\int_\R\Big(\int_\R |K_{1,j}(x,y)|^{1/r}\, dx\Big)^{r}\, dy\\[1ex]
&\leq 4\|b_1\|_{1/r}\Big(\prod_{i=2 }^{n}\|b_i\|_{W^{s}_p}\Big)+[b_1]_{s_0-r-1/p}\Big(\prod_{i=2, i\neq j}^{n}\|b_i'\|_{\infty}\Big)
\int_{\{|y|<1\}}\frac{\|b_j-\tau_y b_j-y\tau_y b_j'\|_{1/r}}{|y|^{3-s_0+r+1/p}} \, dy 
\end{align*}
for  $2\leq j\leq n.$ 
Since  $b_j\in W^{s_0}_p(\R)\hookrightarrow W^{s_0+r-\frac{1}{p} }_{1/r}(\R)$, $2\leq j\leq n $,   Minkowski's integral inequality, H\"older's inequality, and a change of variables lead to 
\begin{align*}
&\hspace{-0.5cm}\int\limits_{\{|y|<1\}}\frac{\|b_j-\tau_y b_j-y\tau_y b_j'\|_{1/r}}{|y|^{3-s_0+r+1/p}} \, dy \\[1ex]
&=\int_{\{|y|<1\}}\frac{1 }{|y|^{2- s_0+r+1/p}} 
\Big(\int_\R \Big|\int_0^1 [b_j'(x-(1-t )y)- b_j'(x-y)]\, dt\Big|^{1/r}\, dx\Big)^{r}\, dy\\[1ex]
  &\leq  \int_0^1 \int_{\{|y|<1\}}\frac{\|b_j'- \tau_{-t y}b_j'\|_{1/r}}{|y|^{2- s_0+r+1/p}}\,dy\, dt\\[1ex]
  &\leq   \Big(\int_{\{|y|<1\}}\frac{1 }{|y|^{(3-2s_0-r+2/p)/(1-r)}}\, dy\Big)^{1-r}\Big(\int_0^1 t^{1- s_0+r+1/p}\, dt\Big)\|b_j'\|_{W^{ s_0+r-1-\frac{1}{p} }_{1/r}}\\[1ex]
  &\leq  C\|b_j\|_{W^{s}_p}.
\end{align*}
Consequently, given  $2\leq j\leq n $,  we have
\begin{align}\label{DE:2}
\Big(\int_\R\Big|\oo(x)\int_\R K_{1,j}(x,y) \, dy\Big|^{p'}\, dx\Big)^{1/p'}\leq C\|\oo\|_{W^{r}_{p'}} \|b_1\|_{W^{ s+1-r-2/p}_{p'}} \Big(\prod_{i=2}^{n}\|b_i\|_{W^s_p}\Big)
\end{align} 
and, similarly,
\begin{align}\label{DE:3}
\Big(\int_\R\Big|\oo(x)\int_\R K_{2,j}(x,y)\, dy\Big|^{p'}\, dx\Big)^{1/p'}\leq    C\|\oo\|_{W^{r}_{p'}} \|b_1\|_{W^{ s+1-r-2/p}_{p'}} \Big(\prod_{i=2}^{n}\|b_i\|_{W^s_p}\Big).
\end{align}
The desired claim follows now from \eqref{RB}-\eqref{DE:3}.
\end{proof}

Lemma~\ref{L:AL0} below   is used to prove Lemma~\ref{L:MP2}   (see also \cite[Lemma 2.7]{AbMa20x} for a related result).
\begin{lemma}\label{L:AL0} Let $p\in(1,\infty)$, $n\in\N$, and $n<t'<t<n+1$.
 Then, there exists   $C>0$ such that
 \begin{align*}
\int_\R\frac{\|b-\tau_\xi b\|_{W^{t'}_p}^p}{|\xi|^{1+(t-t')p}}\, d\xi\leq C\|b\|_{W^{t}_p}^p\qquad\text{for all $b\in W^{t}_p(\R)$.}
 \end{align*}
 \end{lemma}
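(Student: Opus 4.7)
The plan is to decompose $\|b-\tau_\xi b\|_{W^{t'}_p}^p$ into its $W^n_p$ and Gagliardo parts and bound each separately. Writing
\[
\|b-\tau_\xi b\|_{W^{t'}_p}^p=\sum_{j=0}^{n}\|b^{(j)}-\tau_\xi b^{(j)}\|_p^p+[b-\tau_\xi b]_{W^{t'}_p}^p,
\]
I handle each lower-order term ($0\leq j<n$) by splitting the $\xi$-integral at $|\xi|=1$: on $\{|\xi|\geq1\}$ the bound $\|b^{(j)}-\tau_\xi b^{(j)}\|_p\leq 2\|b^{(j)}\|_p$ together with $(t-t')p>0$ gives integrability at infinity, while on $\{|\xi|<1\}$ the estimate $\|b^{(j)}-\tau_\xi b^{(j)}\|_p\leq|\xi|\,\|b^{(j+1)}\|_p$ combined with $t-t'<1$ yields integrability at the origin. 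For the top-order term $j=n$ the same large-$\xi$ argument works, and for $|\xi|<1$ the inequality $t'>n$ gives $|\xi|^{-(1+(t-t')p)}\leq|\xi|^{-(1+(t-n)p)}$, so this contribution is directly absorbed into $[b]_{W^t_p}^p$.

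The main task is the double integral produced by $[b-\tau_\xi b]_{W^{t'}_p}^p$. Setting $h:=b^{(n)}\in W^{t-n}_p(\R)$ and using $\tau_\xi\tau_\eta=\tau_{\xi+\eta}$, the problem reduces to estimating
\[
I:=\int_\R\int_\R\frac{\|h-\tau_\xi h-\tau_\eta h+\tau_{\xi+\eta}h\|_p^p}{|\xi|^{1+(t-t')p}\,|\eta|^{1+(t'-n)p}}\,d\eta\,d\xi.
\]
The key observation is the symmetric bound
\[
\|h-\tau_\xi h-\tau_\eta h+\tau_{\xi+\eta}h\|_p\leq 2\min\bigl(\|h-\tau_\xi h\|_p,\;\|h-\tau_\eta h\|_p\bigr),
\]
obtained by viewing the second-order difference as a translate of either first-order one.

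With this estimate I split $I$ into the regions $\{|\eta|\leq|\xi|\}$ and $\{|\xi|\leq|\eta|\}$, using the $\eta$-difference on the first and the $\xi$-difference on the second. Integrating the free variable first by Fubini, on $\{|\eta|\leq|\xi|\}$ one has $\int_{|\xi|\geq|\eta|}|\xi|^{-1-(t-t')p}\,d\xi=C|\eta|^{-(t-t')p}$, which when multiplied by $|\eta|^{-1-(t'-n)p}$ yields precisely $|\eta|^{-1-(t-n)p}$, the Gagliardo weight for $[h]_{W^{t-n}_p}$; the second region is symmetric. Hence $I\leq C[h]_{W^{t-n}_p}^p\leq C\|b\|_{W^t_p}^p$, and the lemma follows. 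The only real subtlety lies in this last step: one must pick the right comparison ($\min$ of the two single differences) and order the integrations so that the exponents telescope to $1+(t-n)p$; the remaining estimates are elementary.
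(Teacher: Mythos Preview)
Your proof is correct and follows essentially the same route as the paper: the same decomposition into the $W^n_p$ part and the Gagliardo seminorm part, and exactly the same splitting $\{|\eta|\leq|\xi|\}\cup\{|\xi|\leq|\eta|\}$ of the resulting double integral, with the second-order difference controlled by the smaller single difference so that the exponents telescope to $1+(t-n)p$. The only (cosmetic) difference is in handling the integer-order terms: the paper observes in one line that $\int_\R|\xi|^{-1-(t-t')p}\|b^{(k)}-\tau_\xi b^{(k)}\|_p^p\,d\xi=[b^{(k)}]_{W^{t-t'}_p}^p\leq C\|b\|_{W^t_p}^p$ for every $0\leq k\leq n$, whereas you split at $|\xi|=1$ and treat $j<n$ and $j=n$ separately; both arguments are elementary and equally valid.
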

 \begin{proof}
We have 
\[
\|b-\tau_\xi b\|_{W^{t'}_p}^p=\sum_{k=0}^n\|b^{(k)}-\tau_\xi b^{(k)}\|_{p}^p+[b-\tau_\xi b]_{W^{t'}_p}^p
\]
and, given $k\in\{0,\ldots,n\}$, 
\begin{align*}
\int_\R\frac{\|b^{(k)}-\tau_\xi b^{(k)}\|_{p}^p}{|\xi|^{1+(t-t')p}}\, d\xi&=[b^{(k)}]_{W^{t-t'}_p}^p\leq C\|b\|_{W^{t}_p}^p.
\end{align*}
Moreover, since $\R^2=\ov{\{|\xi|<|h|\}}\cup \{|\xi|>|h|\}$
 and
\begin{align*}
\int_\R\frac{[b-\tau_\xi b]_{W^{t'}_p}^p}{|\xi|^{1+(t-t')p}}\, d\xi&=\int_{\R^2}\frac{\|b^{(n)}-\tau_\xi b^{(n)}-\tau_h(b^{(n)}-\tau_\xi b^{(n)})\|_{p}^p}{|\xi|^{1+(t-t')p}|h|^{1+(t'-n)p}}\, dh\, d\xi,
\end{align*} 
with 
\begin{align*}
&\hspace{-0.5cm}\int\limits_{\{|\xi|<|h|\}}\frac{\|b^{(n)}-\tau_\xi b^{(n)}-\tau_h(b^{(n)}-\tau_\xi b^{(n)})\|_{p}^p}{|\xi|^{1+(t-t')p}|h|^{1+(t'-n)p}}\, dh\, d\xi\\[1ex]
&\leq 2^{p}\int_\R\frac{\|b^{(n)}-\tau_\xi b^{(n)}\|_p^p}{|\xi|^{1+(t-t')p}}
\Big(\int_{\{|\xi|<|h|\}}\frac{1}{|h|^{1+(t'-n)p}}\, dh\Big)\, d\xi\leq C[b]^p_{W^t_p},\\[1ex]
&\hspace{-0.5cm}\int_{\{|h|<|\xi|\}}\frac{\|b^{(n)}-\tau_\xi b^{(n)}-\tau_h(b^{(n)}-\tau_\xi b^{(n)})\|_{p}^p}{|\xi|^{1+(t-t')p}|h|^{1+(t'-n)p}}\, dh\, d\xi\\[1ex]
&\leq 2^{p}\int_\R\frac{\|b^{(n)}-\tau_h b^{(n)}\|_p^p}{|h|^{1+(t'-n)p}}
\Big(\int_{\{|h|<|\xi|\}}\frac{1}{|\xi|^{1+(t-t')p}}\, d\xi\Big)\, dh\leq C[b]^p_{W^t_p},
\end{align*}
  the desired estimate is immediate.
 \end{proof}

 We are now in a position to  prove that $B_{n,m}^0(f)\in \kL(W^{r}_{p'}(\R)) $ for all  $r\in[0,1-1/p)$.
 
 \begin{lemma}\label{L:MP2}
Given $p\in(1,2]$,  $s\in(1 +1/p,2)$,  $n,\, m\in\N$,    ${a_1,\ldots, a_m \in W^s_p(\R)}$, and ${r\in[0,1-1/p)}$,
 there exists a constant~$C=C(n,\, m,\,s,\,r,\,p,\, \max_{1\leq i\leq m}\|a_i\|_{W^s_p})$ such that
\begin{align} 
\| B_{n,m}(a_1,\ldots, a_{m})[b_1,\ldots, b_n,\oo]\|_{W^{r}_{p'}}\leq C \|\oo\|_{W^{r}_{p'}}\prod_{i=1}^{n}\|b_i\|_{W^{s}_p} \label{REF1'2}
\end{align}
for all $ b_1,\ldots, b_n\in W^s_p(\R)$ and $\oo\in W^{r}_{p'}(\R).$

Moreover,   $  B_{n,m}\in {\rm C}^{1-}((W^s_p(\R))^m,\kL^{n}_{\rm sym}( W_p^{s}(\R) , \kL(W^{r}_{p'}(\R)))).$ 
 \end{lemma}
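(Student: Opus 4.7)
The case $r=0$ is immediate from Lemma~\ref{L:MP0} together with the embedding $W^s_p(\R)\hookrightarrow W^1_\infty(\R)$; this also controls the $L_{p'}$-part of the norm in the general case. The remaining task is to handle, for $r\in(0,1-1/p)$, the Slobodeckij seminorm
\[
\int_\R \frac{\|B-\tau_\xi B\|_{p'}^{p'}}{|\xi|^{1+rp'}}\,d\xi,\qquad B:=B_{n,m}(a_1,\ldots,a_m)[b_1,\ldots,b_n,\oo],
\]
by expanding $B-\tau_\xi B$ into a sum of terms each of which isolates a single difference $\phi-\tau_\xi\phi$ with $\phi\in\{a_i,b_j,\oo\}$, and then invoking the available Lemmas~\ref{L:MP0} and~\ref{L:MP1}.

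Translation invariance of the principal value integral gives $\tau_\xi B=B_{n,m}(\tau_\xi a_1,\ldots,\tau_\xi a_m)[\tau_\xi b_1,\ldots,\tau_\xi b_n,\tau_\xi\oo]$. Telescoping the $b_j$- and $\oo$-slots one at a time and using \eqref{spr3} to extract the $a_i$-differences produces a decomposition
\[
B-\tau_\xi B = T_\oo + \sum_{j=1}^n T_{b_j} + \sum_{i=1}^m T_{a_i},
\]
in which each $T_\phi$ is of the form $B_{n',m'}(\ldots)[\ldots,\phi-\tau_\xi\phi,\ldots]$ with $n'\in\{n,n+2\}$, $m'\in\{m,m+1\}$, and with all remaining entries bounded uniformly in $\xi$ by the $W^s_p$-norms of the $a_i$'s and $b_j$'s. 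By the symmetry of the operators in their $b$-arguments the factor $\phi-\tau_\xi\phi$ may be moved into the first $b$-slot.

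For $T_\oo$, Lemma~\ref{L:MP0} gives directly $\|T_\oo\|_{p'}\le C\|\oo-\tau_\xi\oo\|_{p'}\prod_i\|b_i\|_{W^s_p}$, whose contribution to the seminorm is exactly $C[\oo]_{W^r_{p'}}^{p'}\prod_i\|b_i\|_{W^s_p}^{p'}$. For the remaining $T_{b_j}$ and $T_{a_i}$, Lemma~\ref{L:MP1} gives
\[
\|T_\phi\|_{p'}\le C\,\|\phi-\tau_\xi\phi\|_{W^{s+1-r-2/p}_{p'}}\|\oo\|_{W^r_{p'}}\prod_{\text{remaining}}\|\cdot\|_{W^s_p},
\]
and the task collapses to the key inequality
\[
\int_\R \frac{\|\phi-\tau_\xi\phi\|_{W^{s+1-r-2/p}_{p'}}^{p'}}{|\xi|^{1+rp'}}\,d\xi\le C\|\phi\|_{W^s_p}^{p'}.
\]
This is the step where the restriction $p\le 2$ enters: property (v) gives the Sobolev embedding $W^s_p(\R)\hookrightarrow W^{s+1-2/p}_{p'}(\R)$ precisely because $p\le p'$, and then Lemma~\ref{L:AL0} applied with $t=s+1-2/p$, $t'=s+1-r-2/p$ closes the estimate. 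A short check using $s\in(1+1/p,2)$, $r\in(0,1-1/p)$ and $p\in(1,2]$ places both $t$ and $t'$ strictly in $(1,2)$, so Lemma~\ref{L:AL0} is applicable.

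The Lipschitz statement $B_{n,m}\in\mathrm{C}^{1-}$ then follows at once from \eqref{spr3}: each difference $B_{n,m}(\wt a)-B_{n,m}(a)$ is a finite sum of operators of the same type with one additional $b$-argument equal to $\wt a_i-a_i$ and one additional $a$-argument equal to $a_i+\wt a_i$, both controlled in $W^s_p$, to which the $W^r_{p'}$-bound just proved applies. The main obstacle is the tight exponent bookkeeping in the last step: the norm $W^{s+1-r-2/p}_{p'}$ produced by Lemma~\ref{L:MP1} has to fit simultaneously the target of the Sobolev embedding and the domain of Lemma~\ref{L:AL0}, and the margins close only because $p\in(1,2]$.
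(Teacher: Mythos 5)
Your proposal is correct and follows essentially the same route as the paper: the $r=0$ case via Lemma~\ref{L:MP0}, the same telescoping of $B-\tau_\xi B$ into pieces $T_1,T_2,T_3$ isolating one difference in the $\oo$-, $b_j$-, or (via~\eqref{spr3}) $a_i$-slot, Lemma~\ref{L:MP0} for the $\oo$-piece and Lemma~\ref{L:MP1} for the others, and then the chain of Lemma~\ref{L:AL0} (with $t=s+1-2/p$, $t'=s+1-r-2/p$) plus the embedding $W^s_p(\R)\hookrightarrow W^{s+1-2/p}_{p'}(\R)$, which indeed pins down the restriction $p\in(1,2]$. The exponent bookkeeping you flag checks out, and the ${\rm C}^{1-}$ statement follows from \eqref{spr3} and the established bound just as in the paper.
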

\begin{proof}
  Let  $B_{n,m}:=B_{n,m}(a_1,\ldots, a_{m})[b_1,\ldots, b_n,\cdot].$ 
Recalling Lemma \ref{L:MP0} (with $p=p'$), we get 
\begin{align*} 
\| B_{n,m}[\oo]\|_{p'}\leq C \|\oo\|_{p'}\prod_{i=1}^{n}\|b_i\|_{W^s_p},
\end{align*}
which proves \eqref{REF1'2} for $r=0$. 
Let now ${r\in(0,1-1/p)}$.
It   remains to estimate  the quantity
\begin{align*}
[B_{n,m}[\oo]]_{W^{r}_{p'}}^{p'}&= \int_{\R}\frac{\|B_{n,m}[\oo]-\tau_\xi B_{n,m}[\oo]\|_{p'}^{p'}}{|\xi|^{1+r p'}}\, d\xi.
\end{align*}
Taking advantage of \eqref{spr3}, we write  
\begin{align*}
 B_{n,m}[\oo]-\tau_\xi B_{n,m}[\oo] = T_1(\cdot,\xi)+T_2(\cdot,\xi)-T_3(\cdot,\xi),\quad  \xi\in\R,
\end{align*}
with
\begin{align*}
T_1(\cdot ,\xi)&:=B_{n,m}(a_1,\ldots, a_{m})[b_1,\ldots, b_n,\oo-\tau_\xi\oo],\\[1ex]
T_2(\cdot,\xi)&:=\sum_{i=1}^nB_{n,m}(a_1,\ldots, a_{m})[\tau_\xi b_1,\ldots,\tau_\xi b_{i-1}, b_i-\tau_\xi b_i, b_{i+1},\ldots b_n,\tau_\xi\oo],\\[1ex]
T_3(\cdot,\xi)&:=\sum_{i=1}^mB_{n+2,m+1}(a_1,\ldots,a_{i},\tau_\xi a_i,\ldots, \tau_\xi a_{m})[\tau_\xi b_1,\ldots\tau_\xi b_n,a_i+\tau_\xi a_i,a_i-\tau_\xi a_i, \tau_\xi\oo].
\end{align*}
Hence,
\begin{align}\label{E:1}
[B_{n,m}[\oo]]_{W^{r}_{p'}}^{p'}\leq 3^{p'}\sum_{\ell=1}^3\int_{\R}\frac{\|T_\ell(\cdot,\xi)\|_{p'}^{p'}}{|\xi|^{1+rp'}}\, d\xi
\end{align} 
and  Lemma \ref{L:MP0} (with $p=p'$) yields
\begin{align}\label{AAE:1}
 \int_{\R}\frac{\|T_1(\cdot,\xi)\|_{p'}^{p'}}{|\xi|^{1+rp'}}\, d\xi&\leq C^p\Big(\prod_{i=1}^{n}\|b_i'\|_{\infty}^{p'} \Big) \int_{\R}\frac{\|  \oo-\tau_\xi\oo\|_{p'}^{p'}}{|\xi|^{1+rp'}}\, d\xi
 \leq \Big(C[\oo]_{W^{r}_{p'}}\prod_{i=1}^{n}\|b_i'\|_{\infty}\Big)^{p'}.
\end{align} 
Furthermore, using \eqref{REF1}, Lemma~\ref{L:AL0} (with $p=p'$, $t=s+1-2/p$, and $t'=s+1-r-2/p$), and the embedding $W^s_p(\R)\hookrightarrow W^{s+1-2/p}_{p'}(\R)$, we deduce that
\begin{equation}\label{AAE:2}
\begin{aligned}
\int_{\R}\frac{\|T_2(\cdot,\xi)\|_{p'}^{p'}}{|\xi|^{1+rp'}}\, d\xi 
&\leq C^p\|\oo\|_{W^{r}_{p'}}^{p'}\sum_{i=1}^n  \Big(\int_{\R}\frac{\|  b_i-\tau_\xi b_i\|_{W^{t'}_{p'}}^{p'}}{|\xi|^{1+(t-t')p'}}\, d\xi\Big)\prod_{j=1,j\neq i}^n\|b_j\|_{W^{s}_p}^{p'}  \\[1ex]
&\leq \Big(C\|\oo\|_{W^{r}_{p'}}\prod_{i=1}^{n}\|b_i\|_{W^{s}_p} \Big)^{p'}
\end{aligned}
\end{equation}
and, by similar arguments,
\begin{equation}\label{AAE:3}
\begin{aligned}
 \int_{\R}\frac{\|T_3(\cdot,\xi)\|_{p'}^{p'}}{|\xi|^{1+rp'}}\, d\xi\leq \Big(C\|\oo\|_{W^{r}_{p'}}\prod_{i=1}^{n}\|b_i\|_{W^{s}_p} \Big)^{p'}.
\end{aligned}
\end{equation}
The relations \eqref{E:1}-\eqref{AAE:3} lead to the desired estimate.
The local Lipschitz continuity  property follows from \eqref{spr3} and \eqref{REF1'2}.
\end{proof}

Together with Lemma~\ref{L:MP2} (with $r=2-s\in(0,1-1/p)$) we obtain the following result.
 \begin{lemma}\label{L:MP3}
Given $p\in(1,2]$,  $s\in(1 +1/p,2)$, $n,\, m\in\N,$ and   ${a_1,\ldots, a_m \in W^s_p(\R)}$, 
  there exists a constant~$C=C(n,\, m,\,s,\,p,\,\max_{1\leq i\leq m}\|a_i\|_{W^s_p})$ such that
\begin{align} 
\| B_{n,m}(a_1,\ldots, a_{m})[b_1,\ldots, b_n,\oo]\|_{W^{s-2}_{p}}\leq C \|\oo\|_{W^{s-2}_{p}}\prod_{i=1}^{n}\|b_i\|_{W^{s}_p} \label{REF1'''}
\end{align}
for all $ b_1,\ldots, b_n\in W^s_p(\R)$ and $\oo\in L_{p}(\R).$

Moreover,   $  B_{n,m}\in {\rm C}^{1-}((W^s_p(\R))^m,\kL^{n}_{\rm sym}(W_p^{s}(\R),\kL(  W^{s-2}_{p}(\R)))).$ 
 \end{lemma}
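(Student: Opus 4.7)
The plan is to deduce the $W^{s-2}_p$-boundedness from the $W^{2-s}_{p'}$-boundedness established in Lemma~\ref{L:MP2} by a duality argument, exploiting the antisymmetry of the integral kernel that defines $B_{n,m}$. Since $s\in(1+1/p,2)$ we have $r:=2-s\in(0,1-1/p)$, so Lemma~\ref{L:MP2} yields
\[
\| B_{n,m}(a_1,\ldots,a_m)[b_1,\ldots,b_n,\psi]\|_{W^{2-s}_{p'}}\leq C\|\psi\|_{W^{2-s}_{p'}}\prod_{i=1}^n\|b_i\|_{W^s_p}
\]
for all $\psi\in W^{2-s}_{p'}(\R)$, which will be the input for the duality step.

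First, I would establish the key symmetry. For $\omega,\psi\in {\rm C}^\infty_0(\R)$, the change of variables $z=x-y$ rewrites
\[
\int_\R B_{n,m}(a_1,\ldots,a_m)[b_1,\ldots,b_n,\omega](x)\psi(x)\, dx =\PV\iint_{\R^2}\omega(z)\psi(x)K(x,z)\, dz\, dx,
\]
where
\[
K(x,z):=\frac{1}{x-z}\cfrac{\prod_{i=1}^n\bigl((b_i(x)-b_i(z))/(x-z)\bigr)}{\prod_{i=1}^m\bigl[1+\bigl((a_i(x)-a_i(z))/(x-z)\bigr)^2\bigr]}.
\]
Each divided difference $(b_i(x)-b_i(z))/(x-z)$ is symmetric in $(x,z)$, the denominator is symmetric, and only the factor $1/(x-z)$ changes sign; hence $K(z,x)=-K(x,z)$. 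Swapping the names of the variables in the double integral therefore gives
\[
\int_\R B_{n,m}(a_1,\ldots,a_m)[b_1,\ldots,b_n,\omega]\,\psi\, dx = -\int_\R \omega\, B_{n,m}(a_1,\ldots,a_m)[b_1,\ldots,b_n,\psi]\, dx.
\]

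Writing $T:=B_{n,m}(a_1,\ldots,a_m)[b_1,\ldots,b_n,\,\cdot\,]$, this identity says that the Banach-space adjoint of $T\in\kL(W^{2-s}_{p'}(\R))$ acts on $\omega\in {\rm C}^\infty_0(\R)$ as $-T\omega$. For $\omega\in L_p(\R)\hookrightarrow W^{s-2}_p(\R)$, approximating by ${\rm C}^\infty_0(\R)$ functions and invoking the duality $W^{s-2}_p(\R)=(W^{2-s}_{p'}(\R))'$, we get
\[
\|T\omega\|_{W^{s-2}_p}=\sup_{\|\psi\|_{W^{2-s}_{p'}}\leq 1}\bigl|\langle \omega,T\psi\rangle\bigr|\leq \|\omega\|_{W^{s-2}_p}\,\|T\|_{\kL(W^{2-s}_{p'})}\leq C\|\omega\|_{W^{s-2}_p}\prod_{i=1}^n\|b_i\|_{W^s_p},
\]
which is exactly \eqref{REF1'''}. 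The local Lipschitz continuity with respect to the parameters $(a_1,\ldots,a_m)$ and the multilinear dependence on $(b_1,\ldots,b_n)$ then follow, as in the proof of Lemma~\ref{L:MP2}, from \eqref{spr3} combined with the bound \eqref{REF1'''} already obtained.

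The only delicate point is the justification of the adjoint identity: the double integral is only conditionally convergent, so the computation has to be carried out first for Schwartz data with an $\e$-cut-off of the singularity at $y=0$, and one has to verify that the principal-value structure of $B_{n,m}$ is preserved under the change $z=x-y$ and under Fubini. Once this is done, the rest of the argument is routine.
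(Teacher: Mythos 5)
Your argument is exactly the paper's proof: both deduce the $W^{s-2}_p$ bound from Lemma~\ref{L:MP2} by duality, using that the $L_2$-adjoint of $B_{n,m}(a_1,\ldots,a_m)[b_1,\ldots,b_n,\cdot]$ is $-B_{n,m}(a_1,\ldots,a_m)[b_1,\ldots,b_n,\cdot]$ and that $W^{s-2}_p(\R)=(W^{2-s}_{p'}(\R))'$, with Lipschitz continuity following from \eqref{spr3}. You merely spell out the antisymmetry of the kernel that the paper asserts without computation.
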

\begin{proof}
We recall  that $W^{s-2}_p(\R)=(W^{2-s}_{p'}(\R))'$. 
Let ${B_{n,m}:=B_{n,m}(a_1,\ldots, a_{m})[b_1,\ldots, b_n,\cdot]}$. 
It is not difficult to prove that the  $L_2$-adjoint of  $B_{n,m} $ is the operator $-B_{n,m}$.
Therefore, given $\oo,\,\varphi\in  {\rm C}^\infty_0(\R)$,  we obtain, in view of Lemma~\ref{L:MP2},  
\begin{align*}
|\langle B_{n,m}[\oo]|\varphi\rangle_{W^{s-2}_p(\R)\times W^{2-s}_{p'}(\R)}|&=|\langle \oo|  B_{n,m}[\varphi]\rangle_{W^{s-2}_p(\R)\times W^{2-s}_{p'}(\R)}|\\[1ex]
&\leq C \|\oo\|_{W^{s-2}_p}\|\varphi\|_{W^{2-s}_{p'}}\prod_{i=1}^{n}\|b_i\|_{W^{s}_p}.
\end{align*}
The estimate \eqref{REF1'''} follows via a standard density argument.
 The Lipschitz continuity property is a consequence of \eqref{REF1'''} and of  \eqref{spr3}.
\end{proof}

\section{A functional analytic framework for the Muskat problem}\label{Sec:2}
In this section we take advantage of the mapping properties established in Section~\ref{Sec:1} and formulate the Muskat problem \eqref{P} as a quasilinear evolution problem
in a suitable functional analytic setting, see \eqref{AF}-\eqref{goal1}.
Afterwards, we show that the problem is of parabolic type. 
This enables us to employ  theory for such evolution equations as presented in~\cite{Am93, MW20}  to establish our main result in Theorem~\ref{MT1}.
The quasilinear structure of~\eqref{P} is  due to the quasilinearity of the curvature operator, the latter being established in Lemma~\ref{L:1}.

\begin{lemma}\label{L:1}
Given $p\in(1,\infty)$ and $s\in(1+1/p,2)$,   the operator~${\kappa(\cdot)[\cdot]}$ defined in~\eqref{qlk} satisfies
 $\kappa\in {\rm C}^\infty(W^s_p(\R),\kL(W^{s+1}_p(\R), W^{s-1}_p(\R))).$
\end{lemma}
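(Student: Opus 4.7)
The strategy is to factor $\kappa(f)[h] = \mu(f)\, h''$ with $\mu(f) := (1+f'^2)^{-3/2}$, and reduce the lemma to the algebra structure of $W^{s-1}_p(\R)$ (which holds since $s-1 > 1/p$) together with the smoothness of a Nemytskii operator. To begin with, the embedding $W^{s-1}_p(\R) \hookrightarrow L_\infty(\R)$ and the algebra property of $W^{s-1}_p(\R)$ are available, and both $\partial: W^s_p(\R) \to W^{s-1}_p(\R)$ and $\partial^2: W^{s+1}_p(\R) \to W^{s-1}_p(\R)$ are bounded linear. Writing
\[
\kappa(f)[h] = h'' + (\mu(f) - 1)\, h'',
\]
the boundedness $\kappa(f) \in \kL(W^{s+1}_p(\R), W^{s-1}_p(\R))$, with operator norm locally bounded in $f$, will follow once we show that $\mu(f) - 1 \in W^{s-1}_p(\R)$ with norm locally bounded in $f$.

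Next I would identify $\mu(f) - 1$ as $G(f')$, where $G(t) := (1+t^2)^{-3/2} - 1$ is real-analytic on $\R$ with $G(0) = 0$ and with all derivatives $G^{(k)}$ bounded on $\R$ (they even decay at infinity). Since $f \mapsto f'$ is linear and bounded $W^s_p \to W^{s-1}_p$, it suffices to prove that the composition operator
\[
\Phi: W^{s-1}_p(\R) \to W^{s-1}_p(\R),\qquad \Phi(u) := G(u),
\]
is well-defined and of class $C^\infty$. Granting this, the full claim of the lemma follows by the chain rule combined with the continuity of the bilinear multiplication map $W^{s-1}_p(\R) \times W^{s-1}_p(\R) \to W^{s-1}_p(\R)$.

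I would then establish smoothness of $\Phi$ by induction on the order $k$, with the natural formula
\[
\Phi^{(k)}(u)[v_1,\ldots,v_k] = G^{(k)}(u)\, v_1\cdots v_k.
\]
The inductive step combines Taylor's formula with integral remainder,
\[
G(u+v) - \sum_{j=0}^{k} \frac{G^{(j)}(u)}{j!}\, v^j = \frac{v^{k+1}}{k!}\int_0^1 (1-t)^k G^{(k+1)}(u+tv)\, dt,
\]
with the mean-value identity $G^{(k)}(u) - G^{(k)}(\tilde u) = (u - \tilde u)\int_0^1 G^{(k+1)}(\tilde u + \tau(u - \tilde u))\, d\tau$ and the estimate~\eqref{MES2}, to reduce everything to $W^{s-1}_p$-control of $G^{(j)}(u)$ and of products of elements of $W^{s-1}_p$. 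The main technical step is precisely this Nemytskii bookkeeping: once one checks that $u \mapsto G^{(j)}(u)$ maps bounded sets of $W^{s-1}_p(\R)$ into bounded sets of $W^{s-1}_p(\R)$ (which follows by induction and the mean-value identity, using that each $G^{(j)}$ is smooth and bounded and that $W^{s-1}_p \hookrightarrow L_\infty$), the algebra property turns the remainder estimate into convergence of the Taylor expansion in operator norm, yielding $\Phi \in C^\infty$ and, thereby, the lemma.
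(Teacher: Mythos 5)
The paper itself does not present a proof of Lemma~\ref{L:1}; it simply defers to \cite[Appendix~C]{MP20x}. Your self-contained Nemytskii-operator argument is the natural route, and the decomposition $\kappa(f)[h]=h''+(\mu(f)-1)h''$ with $\mu(f)-1=G(f')$, $G(0)=0$, is exactly the right device to land in the algebra $W^{s-1}_p(\R)$, since $f'$ (unlike $\mu(f)$ itself) decays at infinity.

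There is, however, one claim in your bookkeeping that is false as stated: you assert that $u\mapsto G^{(j)}(u)$ maps bounded sets of $W^{s-1}_p(\R)$ into bounded sets of $W^{s-1}_p(\R)$. This fails for $j\geq 2$. Indeed, $G''(0)=-3\neq 0$, so for $u$ near $0$ in $W^{s-1}_p(\R)$ the function $G''(u)$ stays near the nonzero constant $-3$, which is not in $L_p(\R)$ and hence not in $W^{s-1}_p(\R)$; more generally $G^{(j)}(u)$ has no decay at infinity unless $G^{(j)}(0)=0$, which only holds for $j\in\{0,1\}$. This also means \eqref{MES2} cannot be applied literally to the pair $(G^{(j)}(u),\,v_1\cdots v_j)$, since both factors must belong to $W^{s-1}_p(\R)$. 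The repair is standard and leaves the structure of your induction intact: split $G^{(j)}(u)=G^{(j)}(0)+\big[G^{(j)}(u)-G^{(j)}(0)\big]$ and prove, via the mean-value identity, that $u\mapsto G^{(j)}(u)-G^{(j)}(0)$ maps bounded sets of $W^{s-1}_p(\R)$ into bounded sets of $W^{s-1}_p(\R)$; equivalently, $G^{(j)}(u)$ lies in the unital Banach algebra $\R\cdot 1\oplus W^{s-1}_p(\R)$. Then $G^{(j)}(u)\,v_1\cdots v_j$ splits as a constant multiple of $v_1\cdots v_j$ (handled directly by the algebra property of $W^{s-1}_p(\R)$) plus a product of two $W^{s-1}_p$-functions, and your Taylor-remainder estimate goes through. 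With this correction the proof is sound.
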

\begin{proof}
The arguments are similar to those presented in \cite[Appendix~C]{MP20x} and are therefore omitted. 
\end{proof}

The Muskat problem \eqref{P} can thus be formulated as the evolution problem
\begin{align}\label{AF}
\frac{df}{dt}(t)=\Phi(f(t))[f(t)],\quad t>0,\qquad f(0)=f_0,
\end{align}
where
\begin{align}\label{PHI}
\Phi(f)[h]:=\frac{k}{2\mu}\bB(f)[\sigma(\kappa(f)[h])'-\Delta_\rho h'],
\end{align}
with $\bB$   introduced in \eqref{OpB}.
Arguing as in \cite[Appendix~C]{MP20x}, we may infer from Lemma~\ref{L:MP3} that, given $n,\, m\in\N$, $p\in(1,2]$, and $s\in(1+1/p,2),$  we have
\[
[f\mapsto B^0_{n,m}(f)]\in {\rm C}^\infty(W^s_p(\R), \kL(W^{s-2}_p(\R))).
\]
This property and   Lemma~\ref{L:1} combined yield
\begin{align}\label{goal1}
 \Phi\in {\rm C}^\infty(W^s_p(\R),\kL(W^{s+1}_p(\R), W^{s-2}_p(\R)))
\end{align}
for all $p\in(1,2]$ and $s\in(1+1/p,2).$

Let  $p\in(1,2]$, $s\in (1+1/p,2)$, and $f\in W^s_p(\R)$ be fixed in the remaining of this section.
The analysis below is devoted to showing that the linear operator~$\Phi(f)$, viewed as an unbounded operator in~$W^{s-2}_p(\R)$ and with definition domain~$W^{s+1}_p(\R)$,
 is the generator of an analytic semigroup in~$\kL(W^{2-s}_p(\R))$,  which  writes in the notation  used in \cite{Am95} as
\begin{align}\label{goal2}
-\Phi(f)\in\kH(W^{s+1}_p(\R), W^{s-2}_p(\R)).
\end{align}
This property is established in  Theorem~\ref{T:GP} below and it identifies the quasilinear evolution problem \eqref{AF} as being of parabolic type.
To start, we note that $\pi^{-1}B_{0,0 }= H$, where $H$ is the Hilbert transform,  and therefore
\begin{align}\label{phi0}
\Phi(0)=\frac{k\sigma}{2 \mu}H\circ\frac{d^3}{dx^3}-\frac{k\Delta_\rho}{2 \mu}H\circ\frac{d}{dx}=-\frac{k\sigma}{2 \mu}\Big(\frac{d^4}{dx^4}\Big)^{3/4}-\frac{k\Delta_\rho}{2 \mu}\Big(-\frac{d^2}{dx^2}\Big)^{1/2},
\end{align}
where $(d^4/dx^4)^{3/4}$ denotes the Fourier multiplier with symbol $m(\xi):=|\xi|^3$ and $(-d^2/dx^2)^{1/2}$ is the Fourier multiplier with symbol $m(\xi):=|\xi|$.
We shall locally approximate the operator~$\Phi(\tau f)$, with $\tau\in[0,1]$,   by  certain Fourier multipliers~$\bA_{j,\tau}$. 
Therefore we choose   for each~${\e\in(0,1)}$ a so-called finite~$\e$-localization family, that is a set 
\[\{(\pi_j^\e,x_j^\e)\,:\, -N+1\leq j\leq N\}\subset {\rm C}^\infty(\mathbb{R},[0,1])\times\R\]
 such that
\begin{align*}
\bullet\,\,\,\, \,\,& \text{$\sum_{j=-N+1}^N(\pi_j^\e)^2=1$ and $\|\pi_j^\e)^{(k)}\|_\infty\leq C\e^{-k}$ for all $ k\in\N,\, -N+1\leq j\leq N$;} \\[1ex]
\bullet\,\,\,\, \,\,  & \text{$ \supp \pi_j^\e $ is an interval of length $\e$ for $|j|\leq N-1$,} \, \, \text{$\supp \pi_{N}^\e\subset\{|x|>1/\e\}$;} \\[1ex]
\bullet\,\,\,\, \,\, &\text{ $ \pi_j^\e\cdot  \pi_l^\e=0$ if $[|j-l|\geq2,\, \max\{|j|,\, |l|\}\leq N-1]$ or $[|l|\leq N-2,\, j=N];$} \\[1ex]
 \bullet\,\,\,\, \,\, &x^\e_j\in\supp\pi_j^\e,\; |j|\leq N-1. 
 \end{align*} 
 The real number $x_N^\e$ plays no role in the analysis below.
To each  finite $\e$-localization family we associate  a second family  $\{\chi_j^\e\,:\, -N+1\leq j\leq N\}\subset {\rm C}^\infty(\mathbb{R},[0,1])$ such that
\begin{align*}
\bullet\,\,\,\, \,\,  &\text{$\chi_j^\e=1$ on $\supp \pi_j^\e$, $-N+1\leq j\leq N$, and $\supp\chi_N^\e\subset\{|x|>1/\e-\e\}$}; \\[1ex]
\bullet\,\,\,\, \,\,  &\text{$\supp \chi_j^\e$ is an interval  of length $3\e$ and with the same midpoint as $ \supp \pi_j^\e$, $|j|\leq N-1$.} 
\end{align*} 

To each finite $\e$-localization family we associate  a norm  on $W^r_p(\R),$ $r\in\R$, which is equivalent to the standard  norm.

\begin{lemma}\label{L:EN}
Let $\e\in(0,1)$ and let  $\{(\pi_j^\e,x_j^\e)\,:\, -N+1\leq j\leq N\}$ be a finite $\e$-localization family.  Given $p\in(1,\infty)$ and $r\in\R$, there exists $c=c(\e,r,p)\in(0,1)$ such that
\[
c\|f\|_{W^r_p}\leq \sum_{j=-N+1}^N\|\pi_j^\e f\|_{W^r_p}\leq c^{-1}\|f\|_{W^r_p},\qquad f\in W^r_p(\R). 
\]
\end{lemma}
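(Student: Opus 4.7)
Fix $\e\in(0,1)$ and the associated family $\{\pi_j^\e\}_{-N+1\leq j\leq N}$. The key observation is that all $\pi_j^\e$ belong to ${\rm BUC}^\infty(\R)$ with all derivatives bounded by a constant $C(\e)$, since this is built into the definition of a finite $\e$-localization family. Hence, for every $\rho\in\R$ and $|j|\leq N$, the function $\pi_j^\e$ is a pointwise multiplier for $W^\rho_p(\R)$ with operator norm controlled by a constant depending only on $\e$, $\rho$, and $p$ (via property (iii) of the Sobolev spaces when $\rho>0$, via property (ii)/algebra structure, and via duality when $\rho<0$). The plan is to exploit this multiplier property together with the partition-of-unity identity $\sum_j(\pi_j^\e)^2=1$.

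For the upper bound, I would simply write
\[
\sum_{j=-N+1}^N\|\pi_j^\e f\|_{W^r_p}\leq \sum_{j=-N+1}^N c^{-1}(\e,r,p)\,\|f\|_{W^r_p}\leq \wt c^{-1}(\e,r,p)\|f\|_{W^r_p},
\]
invoking the pointwise-multiplier estimate in the case $r\geq0$ and its dual version for $r<0$.

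For the lower bound when $r\geq 0$, the identity $\sum_j(\pi_j^\e)^2=1$ gives $f=\sum_j\pi_j^\e(\pi_j^\e f)$, whence
\[
\|f\|_{W^r_p}\leq\sum_{j=-N+1}^N\|\pi_j^\e(\pi_j^\e f)\|_{W^r_p}\leq C(\e,r,p)\sum_{j=-N+1}^N\|\pi_j^\e f\|_{W^r_p}
\]
by the multiplier property once more. For $r<0$ I would use duality: by property (i) for density of test functions and the identification $W^r_p(\R)=(W^{-r}_{p'}(\R))'$,
\[
\|f\|_{W^r_p}=\sup_{\|g\|_{W^{-r}_{p'}}\leq 1}\big|\langle f,g\rangle\big|=\sup_{\|g\|_{W^{-r}_{p'}}\leq 1}\Big|\sum_{j=-N+1}^N\langle \pi_j^\e f,\pi_j^\e g\rangle\Big|,
\]
and then estimate each term by $\|\pi_j^\e f\|_{W^r_p}\|\pi_j^\e g\|_{W^{-r}_{p'}}$ and apply the already established upper bound in $W^{-r}_{p'}(\R)$ to the sum $\sum_j\|\pi_j^\e g\|_{W^{-r}_{p'}}\leq c^{-1}(\e,-r,p')\|g\|_{W^{-r}_{p'}}$.

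The only mildly delicate point is making sure that the pointwise-multiplier norm of $\pi_j^\e$ on $W^\rho_p(\R)$ is uniformly bounded in $j$; this is a non-issue because each $\pi_j^\e$ lies in ${\rm BUC}^\infty(\R)$ with all seminorms dominated by constants depending solely on $\e$, and the whole family is finite. No subtlety arises from $\supp\pi_N^\e$ being unbounded since multiplier estimates only require control of $\pi_N^\e$ in ${\rm C}^\rho(\R)$, not compactness of the support.
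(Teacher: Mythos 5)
Your proof is correct and matches the paper's approach: the paper's proof is the one-liner ``The claim follows from the fact that $\pi_j^\e\in {\rm C}^\infty(\R)$ is a pointwise multiplier for $W^r_p(\R)$,'' and your argument simply fills in the details of that same observation, using the multiplier bound plus $\sum_j(\pi_j^\e)^2=1$ and the finiteness of the family. (One small remark: property (iii) in the Notation section already gives the multiplier estimate for $W^s_p(\R)$ with $s$ of either sign, so the separate duality argument for $r<0$ is not needed — the decomposition $f=\sum_j\pi_j^\e(\pi_j^\e f)$ plus the multiplier bound works uniformly for all $r\in\R$.)
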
 
\begin{proof}
The claim follows from the  fact that  $\pi_j^\e\in {\rm C}^\infty(\R)$ is a pointwise multiplier for~$W^r_p(\R)$.
\end{proof}

The next result is the main step in the proof of \eqref{goal2}.

\begin{thm}\label{T:AP} 
Let $p\in(1,2]$, $s\in(1+1/p,2)$, $\rho\in(0,\min\{(s-1-1/p)/2),2-s\})$, and~$\nu>0$ be given. 
Then, there exist $\e\in(0,1)$, a $\e$-locali\-za\-tion family  $\{(\pi_j^\e,x_j^\e)\,:\, -N+1\leq j\leq N\} $,  a constant $K=K(\e)$, 
and   bounded operators 
$$
\bA_{j,\tau}\in\kL(W^{s+1}_p(\R), W^{s-2}_p(\R)), \qquad\text{$j\in\{-N+1,\ldots,N\}$ and $\tau\in[0,1]$,} 
$$
 such that 
 \begin{equation}\label{D1}
  \|\pi_j^\e \Phi(\tau f) [h]-\bA_{j,\tau}[\pi^\e_j h]\|_{W^{s-2}_p}\leq \nu \|\pi_j^\e h\|_{W^{s+1}_p}+K\|  h\|_{W^{ s+1-\rho}_p}
 \end{equation}
 for all $ -N+1\leq j\leq N$, $\tau\in[0,1],$  and  $h\in W^{s+1}_p(\R)$. 
 The operators $\bA_{j,\tau}$ are defined~by 
  \begin{align*} 
 \bA_{j,\tau }:=- \alpha_\tau(x_j^\e) \Big(\frac{d^4}{dx^4}\Big)^{3/4}, \quad |j|\leq N-1, \qquad\bA_{N,\tau }:= -  \frac{k\sigma}{2 \mu} \Big(\frac{d^4}{dx^4}\Big)^{3/4},
 \end{align*}
and   $\alpha_\tau:=(k\sigma/(2\mu))(1+\tau^2f'^2)^{-3/2}$.
\end{thm}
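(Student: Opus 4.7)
I would begin by splitting $\Phi(\tau f)[h]$ into the surface-tension part $\Psi(\tau f)[h]:=(k\sigma/(2\mu))\bB(\tau f)[(\kappa(\tau f)[h])']$ and the gravity part $-(k\Delta_\rho/(2\mu))\bB(\tau f)[h']$. By Lemma~\ref{L:MP3} and \eqref{adMP}, the gravity contribution is bounded in $W^{s-2}_p$ by $C(\|f\|_{W^s_p})\|h\|_{W^{s-1}_p}\leq C\|h\|_{W^{s+1-\rho}_p}$ (since the range of $\rho$ forces $\rho<2-s<1$), so it is absorbed entirely into the $K\|h\|_{W^{s+1-\rho}_p}$ term of \eqref{D1}. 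The real work is in $\Psi(\tau f)$.

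For $|j|\leq N-1$ the natural frozen-coefficient model is the affine function $\ell_j^\tau(x):=\tau f(x_j^\e)+\tau f'(x_j^\e)(x-x_j^\e)$: direct inspection of \eqref{BNM}-\eqref{OpB} gives $\bB(\ell_j^\tau)=H$ (the Hilbert transform, since $\delta_{[x,y]}\ell_j^\tau/y\equiv\tau f'(x_j^\e)$ is independent of $x$ and $y$), while $\kappa(\ell_j^\tau)[g]=(2\mu/(k\sigma))\alpha_\tau(x_j^\e)g''$. Thus $\Psi(\ell_j^\tau)[g]=\alpha_\tau(x_j^\e)Hg'''=\bA_{j,\tau}[g]$, and I decompose
\[
\pi_j^\e\Psi(\tau f)[h]-\bA_{j,\tau}[\pi_j^\e h]=\pi_j^\e\{\Psi(\tau f)-\Psi(\ell_j^\tau)\}[h]+\{\pi_j^\e\Psi(\ell_j^\tau)[h]-\Psi(\ell_j^\tau)[\pi_j^\e h]\}=:J_1+J_2.
\]
Term $J_2=\alpha_\tau(x_j^\e)\{\pi_j^\e Hh'''-H(\pi_j^\e h)'''\}$ is a commutator of $H\p_x^3$ with $\pi_j^\e$. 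Expanding $(\pi_j^\e h)'''$ by the Leibniz rule produces the principal piece $\alpha_\tau(x_j^\e)[\pi_j^\e,H]h'''$, which satisfies $\|[\pi_j^\e,H]h'''\|_{W^{s-2}_p}\leq C(\e)\|h'''\|_{W^{s-3}_p}\leq C(\e)\|h\|_{W^s_p}$ because $[\pi_j^\e,H]$ is smoothing of order $-1$ thanks to $\pi_j^\e\in {\rm C}^\infty_b$, together with lower-order terms $H((\pi_j^\e)^{(k)}h^{(3-k)})$, $k\geq 1$, that admit the same kind of bound via the multiplier estimate \eqref{adMP}. All of these are absorbed into $K\|h\|_{W^{s+1-\rho}_p}$ since $\rho<1$.

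The core of the argument is the estimate for $J_1$, which I split further as
\[
J_1=\tfrac{k\sigma}{2\mu}\pi_j^\e\{\bB(\tau f)-\bB(\ell_j^\tau)\}[(\kappa(\tau f)[h])']+\tfrac{k\sigma}{2\mu}\pi_j^\e H[(\kappa(\tau f)[h]-\kappa(\ell_j^\tau)[h])'].
\]
In the first summand, the Lipschitz identity \eqref{spr3} (applied to $B_{0,1}^0$ and $B_{1,1}^0$, the building blocks of $\bB$) rewrites $\bB(\tau f)-\bB(\ell_j^\tau)$ as a sum of $B_{n+2,m+1}$-type operators whose new $b$-slots contain $\tau f-\ell_j^\tau$; combining with Lemma~\ref{L:MP3} applied to $(\kappa(\tau f)[h])'\in W^{s-2}_p$ yields a bound whose decisive factor is $\|\chi_j^\e(\tau f-\ell_j^\tau)\|_{W^s_p}$ (up to a non-local remainder associated with $(1-\chi_j^\e)(\tau f-\ell_j^\tau)$ whose kernel is regular on $\text{supp}\,\pi_j^\e$). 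In the second summand, $\kappa(\tau f)[h]-\kappa(\ell_j^\tau)[h]=(\beta_{\tau f}-\beta_{\tau f}(x_j^\e))h''$ with $\beta_f:=(1+f'^2)^{-3/2}$; on $\text{supp}\,\chi_j^\e$ the oscillation is controlled by the H\"older modulus $O(\e^{s-1-1/p})$ of $\beta_{\tau f}\in W^{s-1}_p\hookrightarrow {\rm C}^{s-1-1/p}$. Inserting the cutoff $\chi_j^\e$ and exploiting the multiplier estimates \eqref{MES}-\eqref{EQ:Mul2} in $W^{s-2}_p$, I convert these "small coefficient" bounds into an estimate $\eta(\e)\|h\|_{W^{s+1}_p}+K(\e)\|h\|_{W^{s+1-\rho}_p}$ with $\eta(\e)\to 0$ as $\e\to 0$. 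A final application of Lemma~\ref{L:EN} localizes the $W^{s+1}_p$-factor: the diagonal piece produces $\|\pi_j^\e h\|_{W^{s+1}_p}$ while the off-diagonal pieces, whose supports are disjoint from $\text{supp}\,\pi_j^\e$, yield only lower-order contributions of $h$ via the regularizing action of the separated kernels.

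For $j=N$, density of ${\rm C}_0^\infty(\R)$ in $W^s_p(\R)$ gives $\|\chi_N^\e f\|_{W^s_p}\to 0$ as $\e\to 0$; the appropriate linear substitute is $\ell_N^\tau\equiv 0$ (for which $\bB(0)=H$ and $\kappa(0)[g]=g''$), and the same chain of estimates with $\ell_j^\tau$ replaced by $0$ yields \eqref{D1} for $j=N$. The main obstacle I anticipate is propagating the localized smallness of $\tau f-\ell_j^\tau$ and $\beta_{\tau f}-\beta_{\tau f}(x_j^\e)$ through the non-local singular integrals $B_{n,m}$ inside the distributional space $W^{s-2}_p$: the naive product estimate costs a full $W^{s+1}_p$-norm of $h$, and only the refined interpolation inequalities \eqref{EQ:Mul1}-\eqref{EQ:Mul2} (rather than the cruder \eqref{MES}-\eqref{MES2}) give access to the splitting $\nu\|\pi_j^\e h\|_{W^{s+1}_p}+K\|h\|_{W^{s+1-\rho}_p}$ required in \eqref{D1}.
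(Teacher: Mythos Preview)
Your high-level plan (drop the gravity piece, freeze $\tau f$ to the affine $\ell_j^\tau$, handle the commutator $[\pi_j^\e,H\partial_x^3]$, and treat the curvature part via the H\"older modulus of $\beta_{\tau f}$) matches the paper's philosophy, and the curvature/commutator parts are essentially how the paper proceeds (compare \eqref{b2''}--\eqref{b2}). However, two steps in your treatment of the $\bB$-difference are genuine gaps.

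\medskip
\textbf{The smallness factor from Lemma~\ref{L:MP3} does not exist.} For the first summand of $J_1$ you propose to feed $\tau f-\ell_j^\tau$ into a $b$-slot of a $B_{n+2,m+1}$ via \eqref{spr3} and then invoke Lemma~\ref{L:MP3} to produce the factor $\|\chi_j^\e(\tau f-\ell_j^\tau)\|_{W^s_p}$. But this quantity is \emph{not} small as $\e\to 0$: a scaling computation (using $\|(\chi_j^\e)^{(k)}\|_\infty\sim\e^{-k}$ and $|f'-f'(x_j^\e)|\lesssim\e^{s-1-1/p}$ on $\supp\chi_j^\e$) shows that $[\chi_j^\e(f'-f'(x_j^\e))]_{W^{s-1}_p}=O(1)$; the cutoff derivatives exactly compensate the H\"older gain. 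Moreover, $\tau f-\ell_j^\tau\notin W^s_p(\R)$, so Lemma~\ref{L:MP3} does not even apply before localization, and the ``non-local remainder $(1-\chi_j^\e)(\tau f-\ell_j^\tau)$'' you dismiss is an affine function at infinity that cannot be placed in any $b$-slot of Lemma~\ref{L:MP3}.

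\medskip
\textbf{The a posteriori localization via Lemma~\ref{L:EN} fails.} Even granting a bound $\eta(\e)\|h\|_{W^{s+1}_p}+K(\e)\|h\|_{W^{s+1-\rho}_p}$, you cannot recover $\nu\|\pi_j^\e h\|_{W^{s+1}_p}$ from it: summing \eqref{D1} over $j$ in Theorem~\ref{T:GP} would multiply the error by $2N(\e)\to\infty$, destroying the coercivity. The ``separated kernels'' heuristic is unavailable at this stage, since the operator has already been estimated in norm.

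\medskip
The paper resolves both issues by reversing the order of operations. It first commutes $\pi_j^\e$ \emph{into} the argument of $B_{n,m}^0(f)$ via the commutator estimate of Lemma~\ref{L:AL1}, so that all subsequent freezing acts on the already localized input $\pi_j^\e\oo$ with $\oo=(\kappa(\tau f)[h])'$; the localized output $\|\pi_j^\e\oo\|_{W^{s-2}_p}$ is then turned into $\|\pi_j^\e h\|_{W^{s+1}_p}$ through \eqref{uest}. The smallness in the freezing step (Lemmas~\ref{L:AL3}--\ref{L:AL2}) does \emph{not} come from a $W^s_p$-norm via Lemma~\ref{L:MP3}, but from the sup-norm $\|f'-f'(x_j^\e)\|_{L_\infty(\supp\chi_j^\e)}$ entering through the $L_{p'}$-bound of Lemma~\ref{L:MP0}, and this is upgraded to $W^{s-2}_p$ by a duality argument combined with a mollifier splitting $\varphi=(\varphi-\varphi_\delta)+\varphi_\delta$. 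That mechanism is the substantive content missing from your outline.
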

\begin{proof} In this proof  we denote by $C$  constants that do not depend on $\e$ and we write $K$ for constants that depend on $\e$.

Given  $-N+1\leq j\leq N$, $\tau\in[0,1],$  and  $h\in W^{s+1}_p(\R)$,   Lemma~\ref{L:MP0}  yields 
\begin{align}\label{uest0}
\|\pi_j^\e\bB(\tau f)[h']\|_{W^{s-2}_p}\leq \|\pi_j^\e\bB(\tau f)[h']\|_{p}\leq C\|h'\|_{p}\leq C\|h\|_{W^{s+1-\rho}_p},
\end{align}
while, using some elementary  arguments, we get
\begin{equation}\label{uest}
\begin{aligned}
\|\pi_j^\e(\kappa(f)[h])'\|_{W^{s-2}_p}&\leq  \|\pi_j^\e \kappa(f)[h]\|_{W^{s-1}_p}+K\| \kappa(f)[h]\|_{p}\\[1ex]
&\leq C_0 \|\pi_j^\e h\|_{W^{s+1}_p}+K\|h\|_{W^{s+1-\rho}_p}.
\end{aligned}
\end{equation}
Below we take advantage of \eqref{uest}  when considering the leading order term $\bB(\tau f)[(\kappa(\tau f)[h])']$ of $~\Phi(\tau f) [h]$.
Let $C_1:=C_0k\sigma/2\pi\mu.$\medskip

\noindent{\em Step 1: The case $|j|\leq N-1$.} 
Given $|j|\leq N-1$, we infer from Lemma~\ref{L:AL3} and \eqref{uest}  that
\begin{equation}\label{ST1a}
\begin{aligned}
&\hspace{-0,45cm}\Big\|\pi_j^\e B_{0,1}^0(\tau f) [(\kappa(\tau f)[h])']-\frac{1}{1+\tau^2f'^2(x_j^\e)}B_{0,0}[\pi^\e_j (\kappa(\tau f)[h])']\Big\|_{W^{s-2}_p}\\[1ex]
&\leq  \frac{\nu}{4C_1} \|\pi_j^\e(\kappa(\tau f)[h])'\|_{W^{s-2}_p}+K\|(\kappa(\tau f)[h])'\|_{W^{  s-2-\rho}_p}\\[1ex]
&\leq  \frac{\nu C_0}{4C_1} \|\pi_j^\e h\|_{W^{s+1}_p}+K\|h\|_{W^{ s+1-\rho}_p}
\end{aligned} 
\end{equation}
provided that $\e$ is sufficiently small.
Besides, we have 
\begin{equation*}
\Big\|\pi_j^\e \tau f'B_{1,1}^0(\tau f) [(\kappa(\tau f)[h])']-\frac{\tau^2f'^2(x_j^\e)}{1+\tau^2f'^2(x_j^\e)}B_{0,0}[\pi^\e_j (\kappa(\tau f)[h])']\Big\|_{W^{s-2}_p}\leq T_1+T_2+ T_3,
\end{equation*}
where 
\begin{equation*}
\begin{aligned}
T_1&:=\|\chi_j^\e (f'-f'(x_j^\e))  B_{1,1}^0(\tau f)[\pi_j^\e(\kappa(\tau f)[h])']\|_{W^{s-2}_p},\\[1ex]
T_2&:=\|\chi_j^\e (f'-f'(x_j^\e)) (\pi_j^\e B_{1,1}^0(\tau f)[(\kappa(\tau f)[h])']-B_{1,1}^0(\tau f)[\pi_j^\e (\kappa(\tau f)[h])'])\|_{W^{s-2}_p},\\[1ex]
T_3&:=\|f'\|_\infty\Big\|\pi_j^\e B_{1,1}^0(\tau f) [(\kappa(\tau f)[h])']-\frac{\tau f'(x_j^\e)}{1+\tau^2f'^2(x_j^\e)}B_{0,0}[\pi^\e_j (\kappa(\tau f)[h])']\Big\|_{W^{s-2}_p}.
\end{aligned} 
\end{equation*}
For $\e$ sufficiently small to guarantee that 
$$\|\chi_j^\e (f'-f'(x_j^\e))\|_\infty<\frac{\nu }{40C_1}\Big(\max_{\tau\in[0,1]}\|B_{1,1}^0(\tau f)\|_{\kL(W^{s-2}_p(\R))}\Big)^{-1},$$
  it follows from \eqref{EQ:Mul2} (with $r=s-1$),  Lemma~\ref{L:MP3}, and \eqref{uest} (if $\chi_j^\e (f'-f'(x_j^\e))$ is not identically zero, otherwise the estimate is trivial) that
  \begin{align}\label{ST1bi}
  T_1\leq \frac{\nu C_0}{8C_1} \|\pi_j^\e h\|_{W^{s+1}_p}+K\|h\|_{W^{ s+1-\rho}_p}.
  \end{align}
As $\chi_j^\e (f'-f'(x_j^\e))\in W^{s-1}_p(\R)$ is a pointwise multiplier for $W^{s-2}_p(\R),$ cf. \eqref{mult},  Lemma~\ref{L:AL1} yields
\begin{equation}\label{ST1bii}
T_2\leq K\|(\kappa(\tau f)[h])'\|_{W^{ s-2-\rho}_p}\leq K\|h\|_{W^{ s+1-\rho}_p}.
\end{equation}
Finally, if $\e$ is sufficiently small, we may argue as in the derivation of \eqref{ST1a} to get
\begin{equation}\label{ST1biii}
T_3\leq  \frac{\nu C_0}{8C_1}\|\pi_j^\e h\|_{W^{s+1}_p}+K\|h\|_{W^{ s+1-\rho}_p}.
\end{equation} 
Gathering \eqref{ST1bi}-\eqref{ST1biii}, we conclude that
\begin{equation}\label{ST1b}
\begin{aligned}
&\hspace{-0,45cm}\Big\|\pi_j^\e \tau f'B_{1,1}^0(\tau f) [(\kappa(\tau f)[h])']-\frac{\tau^2f'^2(x_j^\e)}{1+\tau^2f'^2(x_j^\e)}B_{0,0}[\pi^\e_j (\kappa(\tau f)[h])']\Big\|_{W^{s-2}_p}\\[1ex]
&\leq  \frac{\nu C_0}{4C_1} \|\pi_j^\e h\|_{W^{s+1}_p}+K\|h\|_{W^{ s+1-\rho}_p}.
\end{aligned} 
\end{equation} 
We now combine \eqref{uest0}, \eqref{ST1a}, and \eqref{ST1b} and  obtain that 
\begin{align}\label{aA}
\Big\|\pi_j^\e \Phi(\tau f) [h]-\frac{k\sigma}{2\pi\mu}B_{0,0}[\pi^\e_j (\kappa(\tau f)[h])']\Big\|_{W^{s-2}_p}\leq \frac{\nu}{2} \|\pi_j^\e h\|_{W^{s+1}_p}+K\|  h\|_{W^{ s+1-\rho}_p}.
\end{align}

As a final step  we show that, if $\e$ sufficiently small, then
\begin{align}\label{aB}
\Big\|B_{0,0}[\pi^\e_j (\kappa(\tau f)[h])']-\frac{B_{0,0}[(\pi^\e_j h)''']}{(1+\tau^2f'^2(x_j^\e))^{3/2}}\Big\|_{W^{s-2}_p}\leq \frac{\nu C_0}{2C_1} \|\pi_j^\e h\|_{W^{s+1}_p}+K\|  h\|_{W^{ s+1-\rho}_p}
\end{align}
for all $h\in W^{s+1}_p(\R)$, $\tau\in[0,1]$, and $|j|\leq N-1.$
To start, we note that 
\[
B_{0,0}[\pi^\e_j (\kappa(\tau f)[h])']=B_{0,0}[(\pi^\e_j \kappa(\tau f)[h])']-B_{0,0}[(\pi^\e_j)' \kappa(\tau f)[h]],
\]
and Lemma \ref{L:MP3} yields for $-N+1\leq j\leq N$ that
\begin{align}\label{b1}
\|B_{0,0}[(\pi^\e_j)' \kappa(\tau f)[h]]\|_{W^{s-2}_p}\leq K\|  h\|_{W^{ s+1-\rho}_p}.
\end{align}
Moreover, letting $C_2:=\|B_{0,0}\|_{\kL(W^{s-2}_p(\R))}$,  the algebra property of $W^{s-1}_p(\R)$ leads us to
\begin{equation}\label{b2''}
\begin{aligned}
&\hspace{-0.5cm}\Big\|B_{0,0}[(\pi^\e_j \kappa(\tau f)[h])']-\frac{B_{0,0}[(\pi_j^\e h)''']}{(1+\tau^2 f'^2(x_j^\e))^{3/2}}\Big\|_{W^{s-2}_p}\\[1ex]
&\leq C_2\Big\|  \frac{ \pi^\e_j h''}{(1+\tau^2f'^2)^{3/2}}  -\frac{ (\pi_j^\e h) ''}{(1+\tau^2 f'^2(x_j^\e))^{3/2}}\Big\|_{W^{s-1}_p}\\[1ex]
&\leq C_2\Big\|  \Big(\frac{1}{(1+\tau^2f'^2)^{3/2}}  -\frac{1}{(1+\tau^2 f'^2(x_j^\e))^{3/2}}\Big)(\pi_j^\e h) ''\Big\|_{W^{s-1}_p}+K \|h\|_{W^s_p}.
\end{aligned}
\end{equation}
Using \eqref{MES2} together with  the identity $\chi_j^\e\pi_j^\e=\pi_j^\e$, for $\e$   sufficiently small we get
\begin{equation}\label{b2}
\begin{aligned}
&\hspace{-0.5cm}\Big\|  \Big(\frac{1}{(1+\tau^2f'^2)^{3/2}}  -\frac{1}{(1+\tau^2 f'^2(x_j^\e))^{3/2}}\Big)(\pi_j^\e h) ''\Big\|_{W^{s-1}_p}\\[1ex]
&\leq 2\Big\|  \chi_j^\e\Big(\frac{1}{(1+\tau^2f'^2)^{3/2}}  -\frac{1}{(1+\tau^2 f'^2(x_j^\e))^{3/2}}\Big)\Big\|_\infty\|\pi_j^\e h\|_{W^{s+1}_p}+K\| h\|_{W^{s+1-\rho}_p}\\[1ex]
&\leq \frac{\nu C_0}{2C_1C_2} \|\pi_j^\e h\|_{W^{s+1}_p}+K\|h\|_{W^{ s+1-\rho}_p}.
\end{aligned}
\end{equation}
Gathering \eqref{b1}-\eqref{b2}, we conclude that \eqref{aB} holds true. 
 The desired estimate~\eqref{D1} follows now, for $|j|\leq N-1,$ by combining  \eqref{aA} and \eqref{aB}.\medskip

\noindent{\em Step 2: The case $j=N$.}  Similarly to \eqref{ST1a}, we obtain from Lemma~\ref{L:AL4} and \eqref{uest} that
\begin{equation}\label{ST1aN}
\begin{aligned}
&\hspace{-0.5cm}\|\pi_N^\e B_{0,1}^0(\tau f) [(\kappa(\tau f)[h])']-B_{0,0}[\pi^\e_N (\kappa(\tau f)[h])']\|_{W^{s-2}_p}\\[1ex]
&\leq  \frac{\nu C_0}{4C_1} \|\pi_N^\e h\|_{W^{s+1}_p}+K\|h\|_{W^{ s+1-\rho}_p}
\end{aligned}
\end{equation}
for all $h\in W^{s+1}_p(\R)$ and $\tau\in[0,1]$,  provided that $\e$ is sufficiently small. 
Moreover,
\begin{equation*}
\|\pi_N^\e \tau f'B_{1,1}^0(\tau f) [(\kappa(\tau f)[h])']\|_{W^{s-2}_p}\leq T_a+T_b,
\end{equation*}
where
\begin{equation*}
\begin{aligned}
T_a&:=\|\chi_N^\e f' B_{1,1}^0(\tau f)[\pi_N^\e(\kappa(\tau f)[h])']\|_{W^{s-2}_p},\\[1ex]
T_b&:=\|\chi_N^\e f' \big(\pi_N^\e B_{1,1}^0(\tau f)[(\kappa(\tau f)[h])']-B_{1,1}^0(\tau f)[\pi_N^\e (\kappa(\tau f)[h])']\big)\|_{W^{s-2}_p}.
\end{aligned} 
\end{equation*}
Since $\chi_N^\e f'\in W^{s-1}_p(\R)$ is a pointwise multiplier for $W^{s-2}_p(\R),$  cf. \eqref{mult}, Lemma \ref{L:AL1} yields
\begin{equation}\label{ST1biiN}
T_b\leq K\|(\kappa(\tau f)[h])'\|_{W^{ s-2-\rho}_p}\leq K\|h\|_{W^{ s+1-\rho}_p}.
\end{equation}
Because $f'\in W^{s-1}_p(\R)$ vanishes at infinity, for $\e$ sufficiently small to ensure that
\[
\|\chi_N^\e f'\|_\infty<\frac{\nu }{20C_1}\Big(\max_{\tau\in[0,1]}\|B_{1,1}^0(\tau f)\|_{\kL(W^{s-2}_p(\R))}\Big)^{-1},
\]
  it follows from \eqref{EQ:Mul2} (with $r=s-1$),  Lemma~\ref{L:MP3}, and \eqref{uest} (if $\chi_N^\e f'$ is not identically zero, otherwise the estimate is trivial) that
  \begin{align}\label{ST1biN}
  T_a\leq \frac{\nu C_0}{4C_1} \|\pi_N^\e h\|_{W^{s+1}_p}+K\|h\|_{W^{ s+1-\rho}_p}.
  \end{align}
  Gathering \eqref{uest0} and \eqref{ST1aN}-\eqref{ST1biN}, we have shown that if $\e$ is sufficiently small, then
  \begin{align}\label{aAN}
\Big\|\pi_N^\e \Phi(\tau f) [h]-\frac{k\sigma}{2\pi\mu}B_{0,0}[\pi^\e_N (\kappa(\tau f)[h])']\Big\|_{W^{s-2}_p}\leq \frac{\nu}{2} \|\pi_N^\e h\|_{W^{s+1}_p}+K\|  h\|_{W^{ s+1-\rho}_p}
\end{align}
for all $h\in W^{s+1}_p(\R)$ and $\tau\in[0,1]$.
It remains to show that for $\e$ sufficiently small  
\begin{align}\label{aBN}
\|B_{0,0}[\pi^\e_N (\kappa(\tau f)[h])']-B_{0,0}[(\pi^\e_N h)''']\|_{W^{s-2}_p}\leq \frac{\nu C_0}{2C_1} \|\pi_N^\e h\|_{W^{s+1}_p}+K\|  h\|_{W^{ s+1-\rho}_p}
\end{align}
for all $h\in W^{s+1}_p(\R)$ and $\tau\in[0,1]$.
Arguing as in the first step (see \eqref{b2''}), we find in view of   \eqref{b1}   that
\begin{align*}
  &\hspace{-0.5cm} \|B_{0,0}[\pi^\e_N( \kappa(\tau f)[h])']- B_{0,0}[(\pi_N^\e h)''']\|_{W^{s-2}_p} \\[1ex]
   &\leq C_2\Big\|  \Big(\frac{1}{((1+\tau^2f'^2)^{3/2}}  - 1\Big)(\pi_N^\e h)''\Big\|_{W^{s-1}_p}+K\|  h\|_{W^{ s+1-\rho}_p}.
\end{align*}
Using \eqref{MES2} and the fact that $f'$ vanishes at infinity, for $\e$ sufficiently small, we obtain
\begin{equation*} 
\begin{aligned}
&\hspace{-0.5cm}C_2\Big\|  \Big(\frac{1}{((1+\tau^2f'^2)^{3/2}}  - 1\Big)(\pi_N^\e h)''\Big\|_{W^{s-1}_p}\\[1ex]
&\leq 2C_2 \|  \chi_N^\e (1  -(1+\tau^2f'^2)^{3/2} ) \|_\infty\|\pi_N^\e h\|_{W^{s+1}_p}+K\| h\|_{W^{s+1-\rho}_p}\\[1ex]
&\leq \frac{\nu C_0}{2C_1} \|\pi_N^\e h\|_{W^{s+1}_p}+K\|h\|_{W^{ s+1-\rho}_p}.
\end{aligned}
\end{equation*}
This proves \eqref{aBN}. 
The claim \eqref{D1} for $j= N$ follows now directly from   \eqref{aAN} and \eqref{aBN}.
\end{proof}

 We now consider a  class of Fourier multipliers related to the multipliers from Theorem~\ref{T:AP}.

\begin{lemma}\label{L:GAP} 
Let $\eta\in(0,1)$. Given  $\alpha\in[\eta,1/\eta]$, let
\begin{align*} 
 \bA_{\alpha}:=- \alpha\Big(\frac{d^4}{dx^4}\Big)^{3/4}.
 \end{align*}
Then, there exits a constant $\kappa_0=\kappa_0(\eta)\geq 1$ such that
 \begin{align}
\bullet &\quad \lambda-\bA_{\alpha}\in {\rm Isom}(W^{s+1}_p(\R),W^{s-2}_p(\R))\quad  \forall\, \re\lambda\geq 1,\label{L:FM1}\\[1ex]
\bullet &\quad  \kappa_0\|(\lambda-\bA_{\alpha})[h]\|_{W^{s-2}_p}\geq |\lambda|\cdot\|h\|_{W^{s-2}_p}+\|h\|_{W^{s+1}_p} \quad \forall\, h\in W^{s+1}_p(\R),\, \re\lambda\geq 1\label{L:FM2}.
\end{align}
\end{lemma}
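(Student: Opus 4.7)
The plan is to treat both assertions by realising $\lambda-\bA_{\alpha}$ as the Fourier multiplier with symbol $m_\lambda(\xi):=\lambda+\alpha|\xi|^3$ and invoking Mikhlin--H\"ormander multiplier theory on the non-integer Sobolev (equivalently, Besov) spaces $W^r_p(\R)$, $r\in\R\setminus\Z$. The natural candidate for the inverse is the Fourier multiplier $R_\lambda$ with symbol $1/m_\lambda$, and the two estimates in \eqref{L:FM2} reduce to checking that the symbols
\begin{equation*}
\mu^1_\lambda(\xi):=\frac{|\lambda|}{m_\lambda(\xi)},\qquad \mu^2_\lambda(\xi):=\frac{(1+|\xi|^2)^{3/2}}{m_\lambda(\xi)}
\end{equation*}
define bounded Fourier multipliers on $W^{s-2}_p(\R)$ with norms controlled by a constant depending only on $\eta$.

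The essential pointwise information is the lower bound
\begin{equation*}
|m_\lambda(\xi)|^2=(\re\lambda+\alpha|\xi|^3)^2+(\im\lambda)^2\ge \max\bigl(|\lambda|^2,\alpha^2|\xi|^6\bigr)\ge \tfrac14\bigl(|\lambda|+\alpha|\xi|^3\bigr)^2,
\end{equation*}
valid for every $\re\lambda\ge1$, $\alpha\in[\eta,1/\eta]$, and $\xi\in\R$. A direct computation of $\partial_\xi\mu^i_\lambda$ together with the elementary bound $|\lambda|\alpha|\xi|^3\le\tfrac14(|\lambda|+\alpha|\xi|^3)^2$ then yields the Mikhlin estimates
\begin{equation*}
|\xi|^k\bigl|\partial_\xi^k\mu^i_\lambda(\xi)\bigr|\le C(\eta),\qquad k=0,1,\ i=1,2,
\end{equation*}
uniformly in $\lambda$ with $\re\lambda\ge1$; for $\mu^2_\lambda$ one separates $|\xi|\le 1$ (where the numerator is bounded and $|m_\lambda|\gtrsim 1$) from $|\xi|\ge 1$ (where $(1+|\xi|^2)^{3/2}\lesssim\eta^{-1}\alpha|\xi|^3\lesssim\eta^{-1}|m_\lambda|$). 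The Mikhlin theorem on $L_p(\R)$ combined with the interpolation identity \eqref{IP} then promotes these to uniform bounds on $W^{s-2}_p(\R)$.

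Applying the boundedness of the multiplier $\mu^1_\lambda$ to $h=R_\lambda g$ with $g:=(\lambda-\bA_\alpha)h$ produces $|\lambda|\,\|h\|_{W^{s-2}_p}\le C(\eta)\,\|g\|_{W^{s-2}_p}$, while the boundedness of $\mu^2_\lambda$, together with the fact that the Bessel-type lifting with symbol $(1+|\xi|^2)^{3/2}$ is an isomorphism $W^{s+1}_p(\R)\to W^{s-2}_p(\R)$, produces $\|h\|_{W^{s+1}_p}\le C(\eta)\,\|g\|_{W^{s-2}_p}$. Adding these inequalities yields \eqref{L:FM2} with $\kappa_0=\kappa_0(\eta)$; the same multiplier bounds show that $R_\lambda\in\kL(W^{s-2}_p(\R),W^{s+1}_p(\R))$ is a two-sided inverse of $\lambda-\bA_\alpha$ (verified first on $\mathrm{C}^\infty_0(\R)$ and extended by density), which gives \eqref{L:FM1}. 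The only delicate point is the uniform-in-$\lambda$ verification of the Mikhlin condition, but once the pointwise lower bound on $|m_\lambda|$ is recorded this is routine bookkeeping of the $\eta$-dependence and presents no genuine obstacle.
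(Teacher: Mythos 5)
Your proof is correct and follows essentially the same route as the paper: realise $\lambda-\bA_\alpha$ as a Fourier multiplier, apply Mikhlin's theorem in integer Sobolev (Bessel potential) spaces, and promote the uniform resolvent bounds to the fractional scale via the real interpolation identity \eqref{IP}. Your decomposition into the two auxiliary symbols $\mu^1_\lambda$, $\mu^2_\lambda$ simply makes explicit the step that the paper leaves implicit when it interpolates \eqref{L:FM1}--\eqref{L:FM2} from the realisations $\bA_\alpha\in\kL(W^2_p(\R),W^{-1}_p(\R))$ and $\bA_\alpha\in\kL(W^3_p(\R),L_p(\R))$.
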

\begin{proof}
We first consider the  realizations
$$\bA_{\alpha}\in \kL(W^2_p(\R),W^{-1}_p(\R))\quad\text{and}\quad\bA_{\alpha}\in\kL(W^3_p(\R),L_p(\R)).$$ 
Since $W^k_p(\R)=H^k_p(\R)$, $k\in\Z$, Mikhlin's multiplier theorem, cf. e.g. \cite[Theorem 4.23]{AB12}, shows that the properties \eqref{L:FM1}-\eqref{L:FM2} (in the appropriate spaces) 
are valid for these realizations.
 Then,  using the interpolation property \eqref{IP}, we obtain that \eqref{L:FM1}-\eqref{L:FM2} hold true.
\end{proof}

The next result provides the generator property announced in \eqref{goal2}.
\begin{thm}\label{T:GP}
Given $p\in (1,2]$, $s\in(1+1/p,2)$, and $f\in W^{s}_p(\R)$ we have
\begin{align*}
-\p\Phi(f)\in\mathcal{H}(W^{s+1}_p(\R), W^{s-2}_p(\R)).
\end{align*}
\end{thm}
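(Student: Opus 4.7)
The plan is to decompose the Fréchet derivative $\partial\Phi(f)$ of the nonlinearity $g\mapsto\Phi(g)[g]$ into the principal operator $\Phi(f)$ (for which Theorem~\ref{T:AP} and Lemma~\ref{L:GAP} are tailored) plus a genuinely lower-order remainder $\Psi(f)$, to establish the generator property for $-\Phi(f)$ by the localization already at hand, and finally to absorb $\Psi(f)$ via a standard perturbation theorem for the class $\kH$. Applying the product rule to $\Phi(g)[g]=(k/(2\mu))\bB(g)[\sigma(\kappa(g)[g])'-\Delta_\rho g']$ at $g=f$ in direction $h$, I obtain
\[
 \partial\Phi(f)[h]=\Phi(f)[h]+\Psi(f)[h],
\]
where $\Psi(f)[h]$ is the sum of (i) a term built from differentiating $\bB(g)$ in $g$ and evaluated at the fixed argument $\sigma(\kappa(f)[f])'-\Delta_\rho f'$, together with (ii) the term $(k\sigma/(2\mu))\bB(f)[((\partial_1\kappa)(f)[h][f])']$ with $(\partial_1\kappa)(f)[h][f]=-3f'f''h'/(1+f'^2)^{5/2}$.

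For the principal part I would prove $-\Phi(f)\in\kH(W^{s+1}_p(\R),W^{s-2}_p(\R))$ via the localization encoded in Theorem~\ref{T:AP}, along the lines of a standard Mikhlin--Amann pasting argument. Fix $\re\lambda\ge 1$. For each $j$, insert the model operator $\bA_{j,1}$ using Theorem~\ref{T:AP} with $\tau=1$ and $\nu$ small, and invoke Lemma~\ref{L:GAP} to estimate $(\lambda-\bA_{j,1})$; here the coefficient $\alpha_1(x_j^\e)=(k\sigma/(2\mu))(1+f'^2(x_j^\e))^{-3/2}$ lies in a fixed compact subinterval of $(0,\infty)$ because $W^s_p(\R)\hookrightarrow{\rm C}^1(\R)$ forces $\|f'\|_\infty\leq C\|f\|_{W^s_p}$. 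Summing the local estimates through the equivalent norm of Lemma~\ref{L:EN}, absorbing the $\nu$-term on the left-hand side, and using interpolation \eqref{IP} together with Young's inequality to absorb $K\|h\|_{W^{s+1-\rho}_p}$ for $|\lambda|$ large, yields the two-sided resolvent estimate required by membership in $\kH$.

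For the remainder I would show $\Psi(f)\in\kL(W^{s+1-\rho}_p(\R),W^{s-2}_p(\R))$ for some $\rho>0$, which represents the improvement in $h$-regularity. For the $\bB$-derivative piece, apply the commutator identity \eqref{spr3} to express the derivative of $\bB$ in its $(a_i)$-arguments as a finite sum of $B_{n+2,m+1}$-type operators in which $h$ appears at a ``$b$-slot''. By Lemma~\ref{L:MP3} each such operator is bounded on $W^{s-2}_p(\R)$ with a constant linear in $\|h\|_{W^s_p}$; since $W^s_p$-regularity of $h$ is strictly weaker than $W^{s+1}_p$-regularity, a gain $\rho\in(0,1)$ is available. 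For the $\kappa$-derivative piece, the explicit formula shows that $(\partial_1\kappa)(f)[h][f]$ is only first order in $h$ (whereas $\kappa(f)[h]$ is second order), so Lemma~\ref{L:MP3} combined with the pointwise multiplication properties \eqref{mult} gives the desired factorization through $W^{s+1-\rho}_p(\R)$ for $\rho\in(0,1)$.

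The conclusion then follows by a standard perturbation theorem for $\kH$ (see, e.g., Chapter I of H.~Amann, \emph{Linear and Quasilinear Parabolic Problems}): if $-A\in\kH(E_1,E_0)$ and $B\in\kL((E_0,E_1)_{\theta,p},E_0)$ for some $\theta\in(0,1)$, then $-(A+B)\in\kH(E_1,E_0)$. With $E_0=W^{s-2}_p(\R)$, $E_1=W^{s+1}_p(\R)$, and $\theta=(3-\rho)/3$, the identity \eqref{IP} gives $(E_0,E_1)_{\theta,p}=W^{s+1-\rho}_p(\R)$, so $-\partial\Phi(f)\in\kH$ as required. The main obstacle is the third step: controlling the $\bB$-derivative piece of $\Psi(f)$ when the fixed argument $\sigma(\kappa(f)[f])'-\Delta_\rho f'$ sits in a negative-index Sobolev space; the $\rho$-gain in regularity must come entirely from the $h$-slot and not from this argument, and this is precisely what the commutator structure of \eqref{spr3} is designed to produce.
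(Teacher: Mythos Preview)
The symbol $\partial$ in the theorem statement is almost certainly a typo: the surrounding text (see \eqref{goal2} and its use in the proof of Theorem~\ref{MT1}) makes clear that what is meant and needed is $-\Phi(f)\in\kH(W^{s+1}_p(\R),W^{s-2}_p(\R))$, and the paper's own proof establishes precisely this, never referring to a Fr\'echet derivative. For the quasilinear problem \eqref{AF} the Amann theory requires the generator property for the frozen linear part $\Phi(f)$, not for the linearization of $g\mapsto\Phi(g)[g]$. Your decomposition $\partial\Phi(f)=\Phi(f)+\Psi(f)$ and the perturbation analysis of $\Psi(f)$ are therefore unnecessary. (In fact that analysis runs into trouble: for $f\in W^s_p(\R)$ with $s<2$ one only has $f''\in W^{s-2}_p(\R)$, so $(\partial_1\kappa)(f)[h][f]=-3f'f''h'/(1+f'^2)^{5/2}$ lies merely in $W^{s-2}_p(\R)$; after one more derivative and application of $\bB(f)$ you land in $W^{s-3}_p(\R)$, not in $W^{s-2}_p(\R)$ as your factorization through $W^{s+1-\rho}_p(\R)$ would require.)

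More importantly, your argument for the principal part $-\Phi(f)\in\kH$ has a genuine gap. Invoking Theorem~\ref{T:AP} only at $\tau=1$ and combining with Lemma~\ref{L:GAP} and Lemma~\ref{L:EN} yields the a priori bound
\[
\kappa\|(\lambda-\Phi(f))[h]\|_{W^{s-2}_p}\geq |\lambda|\,\|h\|_{W^{s-2}_p}+\|h\|_{W^{s+1}_p}
\]
for $\re\lambda$ large, which gives injectivity and closed range of $\lambda-\Phi(f)$ but not surjectivity; membership in $\kH$ requires $\lambda-\Phi(f)$ to be an isomorphism. The paper closes this by exploiting the full parameter range $\tau\in[0,1]$ in Theorem~\ref{T:AP}: the estimate above is obtained \emph{uniformly} in $\tau$, and since $\omega-\Phi(0)$ is an explicit Fourier multiplier and hence an isomorphism (cf.~\eqref{phi0} and the argument of Lemma~\ref{L:GAP}), the method of continuity along $\tau\mapsto\Phi(\tau f)$ delivers $\omega-\Phi(f)\in{\rm Isom}(W^{s+1}_p(\R),W^{s-2}_p(\R))$. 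This is exactly why Theorem~\ref{T:AP} is formulated for all $\tau\in[0,1]$ and not just for $\tau=1$.
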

\begin{proof}  
Fix $\rho\in(0,\min\{(s-1-1/p)/2),2-s\})$. 
Since $f'\in W^{s-1}_p(\R)$ is bounded, there exists a constant $\eta>0$ with the property that the function $\alpha_\tau$, $\tau\in[0,1]$, from Theorem~\ref{T:AP} satisfies
$\eta\leq |\alpha_\tau|\leq  1/\eta$ for all $\tau\in[0,1]$. 
Hence, regardless of  $\e>0,$ the operators~$A_{j,\tau},$ with~$-N+1\leq j\leq N$ and $\tau\in[0,1],$ defined in   Theorem~\ref{T:AP}  satisfy \eqref{L:FM1}-\eqref{L:FM2} 
with a constant $\kappa_0\geq1.$ 

We set $\nu:=(2\kappa_0)^{-1}$.
 Theorem \ref{T:AP} provides  an $\e\in(0,1) $,  a $\e$-localization family, a  constant~$K=K(\e)>0$,
and bounded operators~$\bA_{j,\tau}\in\kL(W^{s+1}_p(\R), W^{s-2}_p(\R))$,~${-N+1\leq j\leq N}$ and~$\tau\in[0,1],$  such that
 \begin{equation*} 
  2\kappa_0\|\pi_j^\e\Phi(\tau f)[h]-\bA_{j,\tau}[\pi^\e_j h]\|_{W^{s-2}_p}\leq \|\pi_j^\e h\|_{W^{s+1}_p}+2\kappa_0 K\|  h\|_{W^{s+1-\rho}_p}
 \end{equation*}
for all $-N+1\leq j\leq N$, $\tau\in[0,1],$  and  $h\in W^{s+1}_p(\R)$.
Besides,  Lemma \ref{L:GAP}  yields
  \begin{equation*} 
    2\kappa_0\|(\lambda-\bA_{j,\tau})[\pi^\e_jh]\|_{W^{s-2}_p}\geq 2|\lambda|\cdot\|\pi^\e_jh\|_{W^{s-2}_p}+ 2\|\pi^\e_j h\|_{W^{s+1}_p}
 \end{equation*}
 for all $-N+1\leq j\leq N$, $\tau\in[0,1],$  $\re \lambda\geq 1$, and $h\in W^{s+1}_p(\R)$.
The latter  inequalities imply
 \begin{align*}
   2\kappa_0\|\pi_j^\e(\lambda-\Phi(\tau f))[h]\|_{W^{s-2}_p}\geq& 2\kappa_0\|(\lambda-\bA_{j,\tau})[\pi^\e_j h]\|_{W^{s-2}_p}-2\kappa_0\|\pi_j^\e\Phi(\tau f)[h]-\bA_{j,\tau}[\pi^\e_j h]\|_{W^{s-2}_p}\\[1ex]
   \geq& 2|\lambda|\cdot\|\pi^\e_j h\|_{W^{s-2}_p}+ \|\pi^\e_j h\|_{W^{s+1}_p}-2\kappa_0K\|  f\|_{W^{s-1-\rho}_p}.
 \end{align*}
 Summing  up over $j$  and using Lemma~\ref{L:EN}, the interpolation property \eqref{IP},  and Young's inequality  we find constants  $\kappa=\kappa(f)\geq1$  and $\omega=\omega( f)>0 $ such that 
  \begin{align}\label{KDED}
   \kappa\|(\lambda-\Phi(\tau f ))[h]\|_{W^{s-2}_p}\geq |\lambda|\cdot\|h\|_{W^{s-2}_p}+ \| h\|_{W^{s+1}_p}
 \end{align}
for all   $\tau\in[0,1],$   $\re \lambda\geq \omega$, and  $h\in W^{s+1}_p(\R)$.

Recalling \eqref{phi0} and arguing as in Lemma \ref{L:GAP} we may choose $\omega$ sufficiently large to guarantee that $\omega-\Phi(0) \in {\rm Isom}(W^{s+1}_p(\R), W^{s-2}_p(\R))$.
The method of continuity together with~\eqref{KDED} consequently yields 
\begin{align}\label{DEDK2}
   \omega-\Phi(f)\in {\rm Isom}(W^{s+1}_p(\R), W^{s-2}_p(\R)).
 \end{align}
The relations \eqref{KDED} (with $\tau=1$) and \eqref{DEDK2}  imply the desired claim, cf. \cite[Chapter I]{Am95}.
\end{proof}

We next present the proof of the main result. The arguments rely to a large extent on the theory of quasilinear parabolic problems presented in \cite{Am93, Am86, Am88} (see also \cite{MW20}).
Besides, in order to obtain the parabolic smoothing properties, we additionally employ a parameter 
trick which was successfully applied also to other problems, cf., e.g., \cite{An90, ES96, PSS15, AC11}.

\begin{proof}[Proof of Theorem~\ref{MT1}]
Fix 
\[
1+\frac{1}{p}<\ov s<s<2 \qquad \text{and}\qquad 0< \beta:=\frac{2}{3}<\alpha:=\frac{s-\ov s+2}{3}<1. 
\]
We further set $E_1:=W^{\ov s+1}_p(\R)$, $E_0:=W^{\ov s-2}_p(\R)$, and $E_\theta:=(E_0,E_1)_{\theta,p} $, with $\theta\in\{\alpha,\, \beta\}$.
Recalling the interpolation property \eqref{IP}, we have $E_\alpha=W^{s}_p(\R)$ and $E_\beta=W^{\ov s}_p(\R)$.
In view of \eqref{goal1} and \eqref{goal2} (with~${s=\ov s}$), we  then get
 \[
-\Phi \in {\rm C}^{\infty}(E_\beta,\kH(E_1, E_0)),
\]
 and the assumptions of \cite[Theorem 1.1]{MW20} are satisfied in the context of the Muskat problem~\eqref{AF}.
Applying  \cite[Theorem 1.1]{MW20}, we may conclude that \eqref{AF}  has for each $f_0\in W^s_p(\R)$ a  unique maximal classical solution  $f= f(\,\cdot\, ; f_0)$
  such that 
 \begin{equation} \label{reg123}
 f\in {\rm C}([0,T^+),W^s_p(\mathbb{R}))\cap {\rm C}((0,T^+), W^{{\ov s}+1}_p(\mathbb{R}))\cap {\rm C}^1((0,T^+), W^{{\ov s}-2}_p(\mathbb{R})).
  \end{equation}
  Moreover, if the solution belongs to the set
  \[
  \bigcup_{\eta\in(0,1)}{\rm C}^{\eta}([0,T^+),W^{\ov s}_p(\mathbb{R})),
  \]
then it is also unique, cf. \cite[Remark 1.2~(ii)]{MW20}.
We now prove that  each solution to \eqref{AF} that satisfies \eqref{reg123} belongs to ${\rm C}^{\eta}([0,T^+),W^{\ov s}_p(\mathbb{R}))$ with $\eta=(s-\ov s)/(3+s-\ov s)$.
Indeed, since $f\in {\rm C}([0,T^+),W^s_p(\mathbb{R}))$, we infer from  Lemma~\ref{L:MBC}  that
\[
\sup_{t\in(0,T]}\Big\|\frac{df}{dt}(t)\Big\|_{W^{\ov s-3}_p(\R)}<\infty
\]
for each $T\in(0,T^+)$. 
Since for $\theta:=3/(s-\ov s+3)$ we have $(W^{\ov s-3}_{p}(\R), W^{s}_{p}(\R))_{\theta,p}=W^{\ov s}_{p}(\R),$ cf. \eqref{IP},  \eqref{reg123} and the latter estimate now yield
\[
\|f(t_1)-f(t_2)\|_{W^{\ov s}_p}\leq C\|f(t_1)-f(t_2)\|_{W^{\ov s-3}_p}^{1-\theta} \leq C |t_1-t_2|^{(s-\ov s)/(3+s-\ov s)} \quad\text{for $0\leq t_1\leq t_2\leq T$.}
\]
Therewith the existence and uniqueness claim is   proven. 
Moreover, the  assertion (i) follows from the abstract theory (see the proof of \cite[Theorem~1.1]{MW20}).

The parabolic smoothing property established   at (ii) can be shown by arguing as in the more restrictive case considered in \cite[Theorem 1.3]{MBV19}.

Finally, in order to prove (iii), we assume  that $f= f(\,\cdot\, ; f_0):[0,T^+)\to W^{s}_p(\R)$ is a maximal classical solution  with $T^+<\infty$ and that
\[
\sup_{t\in[0,T^+)}\|f(t)\|_{W^s_p(\R)}<\infty.
\]
Using again Lemma~\ref{L:MBC} and arguing as above, we conclude that $f:[0,T^+)\to W^{\ov s}_p(\mathbb{R})$ is uniformly continuous.
Let now 
\[
1+\frac{1}{p}<\wt s<\ov s<2 \qquad \text{and}\qquad 0< \beta:=\frac{2}{3}<\alpha':=\frac{\ov s-\wt s+2}{3}<1. 
\]
Choosing $F_1:=W^{\wt s+1}_p(\R)$, $F_0:=W^{\wt s-2}_p(\R)$, and setting $F_\theta:=(F_0,F_1)_{\theta,p}$,  $\theta\in\{\alpha',\, \beta\}$,
 we have that~$F_\alpha=W^{\ov s}_p(\R)$ and $F_\beta=W^{\wt s}_p(\R)$.
Moreover  \eqref{goal1} and \eqref{goal2} (with~${s=\wt s}$), yield
 \[
-\Phi \in {\rm C}^{\infty}(F_\beta,\kH(F_1, F_0)).
\]
Thus, we may apply again \cite[Theorem 1.1]{MW20}~(iv)~($\alpha$) to~\eqref{AF} and conclude that $f$ can be extended to an interval $ [0,{\wt T}^+)$ with ${\wt T}^+>T^+$ and such that 
\begin{equation*} 
 f\in {\rm C}([0,\wt T^+),W^{\ov s}_p(\mathbb{R}))\cap {\rm C}((0,\wt T^+), W^{{\wt s}+1}_p(\mathbb{R}))\cap {\rm C}^1((0,\wt T^+), W^{{\wt s}-2}_p(\mathbb{R})).
  \end{equation*}
Moreover, by (ii) (with $(s,\ov s)=(\ov s,\wt s)$) we also have $f\in {\rm C}^1((0,{\wt T}^+), W^{3}_p(\mathbb{R})),$ and this contradicts   the maximality of $f$.
This proves the claim (iii) and  the argument is complete.
\end{proof}

We finish the section by presenting a result  used in the proof of Theorem~\ref{MT1}.

\begin{lemma}\label{L:MBC}
Given $M>0$, there exists a constant $C=C(M)$ such that
\begin{align}\label{bvm1}
\|\Phi(f)[f]\|_{W^{s-3}_p}\leq C
\end{align} 
for all $f\in W^{s+1}_p(\R)$ with  $\|f\|_{W^s_p}\leq M$.
\end{lemma}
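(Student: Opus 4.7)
Set $g:=\sigma\kappa(f)-\Delta_\rho f$. Because $\kappa(f)=f''/(1+f'^2)^{3/2}$ and $(1+f'^2)^{-3/2}\in W^{s-1}_p(\R)\cap C^{s-1-1/p}(\R)$ is a pointwise multiplier for $W^{s-2}_p(\R)$ by~\eqref{mult} (second slot, with $r=s-1$), one checks directly that $\|g\|_{W^{s-2}_p}\leq C(M)$. Since $\Phi(f)[f]=\frac{k}{2\mu}\bB(f)[g']$, the task reduces to bounding $\|\bB(f)[g']\|_{W^{s-3}_p}$ by~$C(M)$. My plan is to gain one derivative via the decomposition
\begin{equation*}
\bB(f)[g']\;=\;\partial_x(\bB(f)[g])\;-\;[\partial_x,\bB(f)][g].
\end{equation*}
Lemma~\ref{L:MP3}, together with the multiplier property of $f'\in W^{s-1}_p(\R)$ on $W^{s-2}_p(\R)$ (again from~\eqref{mult}), gives $\bB(f)\in\kL(W^{s-2}_p(\R))$ with norm $\leq C(M)$, whence $\bB(f)[g]\in W^{s-2}_p(\R)$ is bounded by $C(M)$ and $\partial_x(\bB(f)[g])\in W^{s-3}_p(\R)$ inherits the same bound.

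The commutator $[\partial_x,\bB(f)][g]$ is the technical crux. A direct chain-rule computation exploiting the identity $\delta_{[x,y]}f'/y=\partial_x(\delta_{[x,y]}f/y)$ yields
\begin{equation*}
[\partial_x,B_{0,1}^0(f)][\omega]=-2B_{2,2}(f,f)[f,f',\omega],\quad [\partial_x,B_{1,1}^0(f)][\omega]=2B_{1,2}(f,f)[f',\omega]-B_{1,1}(f)[f',\omega].
\end{equation*}
Combined with the Leibniz rule $\partial_x(f'B_{1,1}^0(f)[g])=f''B_{1,1}^0(f)[g]+f'B_{1,1}^0(f)[g']+f'[\partial_x,B_{1,1}^0(f)][g]$, this decomposes $[\partial_x,\bB(f)][g]$ into a sum of $B_{n,m}$-type terms in which $f'$ occupies a $b$-slot. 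A direct application of Lemma~\ref{L:MP3} there would require $\|f'\|_{W^s_p}=\|f\|_{W^{s+1}_p}$, which is not at our disposal. To circumvent this, I invoke the same chain-rule identity in reverse,
\begin{equation*}
B_{2,2}(f,f)[f,f',\omega]=-\tfrac12\,\partial_x B_{0,1}^0(f)[\omega]+\tfrac12\,B_{0,1}^0(f)[\omega'],
\end{equation*}
and analogous identities for $B_{1,2}(f,f)[f',\cdot]$ and $B_{1,1}(f)[f',\cdot]$ (with primitive kernels involving the bounded functions $\arctan(\delta_{[x,y]}f/y)$ and $(\delta_{[x,y]}f/y)/(1+(\delta_{[x,y]}f/y)^2)$). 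Substituting into the commutator and rearranging, I re-express it as $\partial_x$ acting on quantities bounded in $W^{s-2}_p(\R)$ by Lemma~\ref{L:MP3}, plus residual terms bounded in $L_p(\R)\hookrightarrow W^{s-3}_p(\R)$ by Lemma~\ref{L:MP0} (using $\|f'\|_\infty\leq C\|f\|_{W^s_p}\leq CM$ thanks to $W^s_p(\R)\hookrightarrow C^1(\R)$).

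\textbf{Main obstacle.} The reverse substitution is superficially circular: it reintroduces $B_{0,1}^0(f)[g']\subset\bB(f)[g']$ on the right-hand side. The hardest part will be verifying that the difference of the two auxiliary primitive SIOs entering the analogues for $B_{1,2}$ and $B_{1,1}$ is exactly $B_{1,1}^0(f)$, which produces the precise cancellation of the reintroduced $\bB(f)[g']$ against the $f'B_{1,1}^0(f)[g']$ present in the Leibniz expansion above. The restriction $p\in(1,2]$ enters critically at~\eqref{mult}, ensuring that pointwise multiplication by $f'\in W^{s-1}_p(\R)$ remains bounded on the negative-order space $W^{s-2}_p(\R)$ throughout these manipulations.
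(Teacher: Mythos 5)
Your reduction is sound: $g:=\sigma\kappa(f)-\Delta_\rho f$ is bounded in $W^{s-2}_p(\R)$ by a constant depending only on $M$, and the task really is to bound $\|\bB(f)[g']\|_{W^{s-3}_p}$. The commutator decomposition $\bB(f)[g']=\partial_x(\bB(f)[g])-[\partial_x,\bB(f)][g]$ is also a legitimate starting point, and your formulas for $[\partial_x,B^0_{0,1}(f)]$ and $[\partial_x,B^0_{1,1}(f)]$ are correct. However, the gap you flag is not ``superficially'' circular --- it is genuinely circular as you have set it up. The commutator $[\partial_x,\bB(f)][g]$ decomposes into terms with $f'$ in a $b$-slot (needing $\|f\|_{W^{s+1}_p}$) and an $f''B^0_{1,1}(f)[g]$ term ($f''\in W^{s-2}_p$ is not a pointwise multiplier on any $W^{s-2}_p$ or $W^{s-3}_p$). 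Applying the chain-rule identity in reverse to, say, $B_{2,2}(f,f)[f,f',g]$ regenerates $B^0_{0,1}(f)[g']$ exactly, cancelling the copy already present in $\bB(f)[g']$ and leaving you where you started; the same happens for $f''B^0_{1,1}(f)[g]$ if you integrate by parts in $x$. The ``precise cancellation'' you hope for, if pursued SIO-by-SIO, does not produce a bound --- it reproduces the identity $\bB(f)[g']=\bB(f)[g']$.

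The paper breaks this cycle with a different mechanism. It passes to the $L_2$-adjoint $\pi\bB^*(f)=-\big(B^0_{0,1}(f)+B^0_{1,1}(f)[f'\,\cdot\,]\big)$ and establishes the closed-form identity $(\bB^*(f)[\psi])'=-\bB(f)[\psi']$. This is proved by a \emph{single} structural observation under the PV integral: for the combined kernel of $\bB^*(f)$ one has the exact relation
\begin{equation*}
\partial_x\Big(\frac{y+f'(x-y)\,\delta_{[x,y]}f}{y^2+(\delta_{[x,y]}f)^2}\Big)
=\partial_y\Big(\frac{\delta_{[x,y]}f\;\delta_{[x,y]}f'}{y^2+(\delta_{[x,y]}f)^2}\Big),
\end{equation*}
after which one integrates by parts in $y$. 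This trades the commutator for the single term $-\bB(f)[\psi']$ with no loose $f'$'s in $b$-slots and no $f''$. With this identity in hand, the paper bounds $\bB^*(f)$ on $W^{3-s}_{p'}(\R)$ using Lemma~\ref{L:MP0} (for the $L_{p'}$ part) and Lemma~\ref{L:MP2} (for the derivative), and then dualizes to get $\bB(f)\in\kL(W^{s-3}_p(\R))$ with norm $C(M)$. (Equivalently, dualizing the identity gives $\bB(f)[g']=-(\bB^*(f)[g])'$, which together with $\bB^*(f)\in\kL(W^{s-2}_p(\R))$ --- from Lemma~\ref{L:MP3} and $\eqref{mult}_2$ --- yields the claim directly.) The crucial point your plan misses is that the commutator must be treated for the \emph{combined} operator $\bB(f)$ (or $\bB^*(f)$), not piecewise for $B^0_{0,1}$ and $f'B^0_{1,1}$: the $\partial_x\leftrightarrow\partial_y$ swap works only for the specific combined kernel, and the cross-cancellation between the two pieces is what makes it close.

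One secondary note: the paper bounds $(\kappa(f)[f])'$ in $W^{s-3}_p$ via the alternative identity $(\kappa(f)[f])'=\big(f'/(1+f'^2)^{1/2}\big)''$, which is a neat shortcut, but your route through $g\in W^{s-2}_p$ is equally valid for that step.
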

\begin{proof} In this proof the constants denoted by $C$ depend only on~$M$.
Given~$f\in W^{s+1}_p(\R)$ with~$\|f\|_{W^s_p}\leq M$,  we have
\begin{align}\label{L:BBB2}
\|(\kappa(f)[f])'\|_{W^{s-3}_p(\R)}=\Big\|\Big(\frac{f'}{(1+f')^{1/2}}\Big)''\Big\|_{W^{s-3}_p}\leq C \Big\| \frac{f'}{(1+f')^{1/2}} \Big\|_{W^{s-1}_p}\leq C.
\end{align}
Moreover, we infer from Lemma~\ref{L:MP0} that
\begin{align}\label{Em:1}
\|\bB(f)[f']\|_{W^{s-3}_p}\leq \|\bB(f)[f']\|_{p}\leq C
\end{align}
and it remains to prove that 
\begin{align}\label{Em:2}
\|\bB(f)[(\kappa(f)[f])']\|_{W^{s-3}_p}\leq  C.
\end{align}
To this end we note that the $L_2$-adjoint $\bB^*(f)$ of $\bB(f)$ is identified by the relation 
 \[
 \pi\bB^*(f):=-(B_{0,1}(f)+B_{1,1}(f)[f,f'\cdot]).
 \]
Therefore, given $\oo,$ $\psi\in {\rm C}^\infty_0(\R)$, we have 
\begin{equation}\label{Em:3} 
 \begin{aligned}
 |\langle\bB(f)[\oo]|\psi\rangle_{W^{s-3}_p(\R)\times W^{3-s}_{p'}(\R)}|&= |\langle \oo  |\bB^*(f)[\psi]\rangle_{W^{s-3}_p(\R)\times W^{3-s}_{p'}(\R)}|\\[1ex]
 &\leq \|\oo\|_{W^{s-3}_p}\|\bB^*(f)[\psi]\|_{W^{3-s}_{p'}}.
 \end{aligned}
 \end{equation}

We next show that 
 \begin{align}\label{bvm2}
 \|\bB^*(f)[\psi]\|_{W^{3-s}_{p'}}\leq C\|\psi\|_{W^{3-s}_{p'}}
 \end{align}
 for all $ \psi\in W^{3-s}_{p'}(\R)$ and $f\in W^{s+1}_p(\R)$ with~$\|f\|_{W^s_p}\leq M$. 
 Indeed, Lemma~\ref{L:MP0} (with~$p=p'$), yields
 \begin{align}\label{bvm3}
 \|\bB^*(f)[\psi]\|_{ p' }\leq C  \|\psi\|_{p'}.
 \end{align}
Additionally,  we may argue   as in the proof of \cite[Lemma 3.5]{MBV19} to infer in view of Lemma~\ref{L:MP0}  that ${\bB^*(f)[\psi]\in W^1_{p'}(\R)}$  with
\begin{align*}
\pi( \bB^*(f)[\psi])'(x)&=\pi\bB^*(f)[\psi'](x)-\PV\int_\R \p_x\Big(\frac{y+f'(x-y)\delta_{[x,y]}f}{y^2+(\delta_{[x,y]} f)^2]}\Big)   \psi(x-y) \, dy\\[1ex]
&=\pi\bB^*(f)[\psi'](x)-\PV\int_\R \p_y\Big(\frac{ \delta_{[x,y]}f\delta_{[x,y]}f'}{y^2+(\delta_{[x,y]} f)^2]}\Big)   \psi(x-y) \, dy\\[1ex]
&=-\pi\bB(f)[\psi'](x).
\end{align*}
Invoking Lemma~\ref{L:MP2} (with $r=2-s\in(0,1-1/p)$), we get that 
\begin{align}\label{bvm4}
\|B_{0,1}^0(f)[\psi']\|_{W^{2-s}_{p'}}+\|B_{1,1}^0(f)[\psi']\|_{W^{2-s}_{p'}}\leq C\|\psi\|_{W^{3-s}_{p'}}, 
\end{align}
and since $f'\in W^{s-1}_p(\R)$ is a pointwise multiplier for $W^{2-s}_{p'}(\R)$, cf. \eqref{mult}, 
we may conclude from~\eqref{bvm3} and \eqref{bvm4} that
\begin{align*}
 \|\bB^*(f)[\psi]\|_{W^{3-s}_{p'}}\leq  \|\bB^*(f)[\psi]\|_{p'}+\|(\bB^*(f)[\psi])'\|_{W^{2-s}_{p'}}\leq C\|\psi\|_{W^{3-s}_{p'}}.
\end{align*}
This proves \eqref{bvm2}.
 Combining \eqref{Em:3} and \eqref{bvm2}, a standard density argument leads us to
 \[
 \|\bB(f)[\oo]\|_{W^{s-3}_p}\leq C\|\oo\|_{W^{s-3}_p}, \qquad \oo\in W^{s-3}_p(\R), 
 \]
 and \eqref{Em:2} follows via \eqref{L:BBB2}.
Recalling the definition \eqref{PHI} of $\Phi$, the bound~\eqref{bvm1} is a straightforward consequence of~\eqref{Em:1} and~\eqref{Em:2}.
\end{proof}

\appendix
 \section{Freezing the kernels of singular operators in Sobolev spaces with negative exponent}\label{Sec:A}

In this section we establish some technical results which enable us to locally approximate the Fr\'echet derivative $\p\Phi(\tau f)$, $\tau\in[0,1]$ and $f\in W^s_p(\R)$, 
by certain explicit Fourier multipliers, cf. Theorem~\ref{T:AP}.
These results can be viewed as a generalization of the method of freezing the coefficients of elliptic differential operators.
We extend  this method to a  particular class of  singular integral operators, one of the main difficulties arising from the fact that the Sobolev space
 where the error is measured has negative exponent. 
As a first result we establish a commutator type estimate. 

\begin{lemma}\label{L:AL1}
Let $p\in(1,2]$, $s\in(1+1/p,2),$ $\rho\in(0,(s-1-1/p)/2)$, $n,\, m\in\N,$    ${f\in W^s_p(\R)}$,  and $\varphi\in{\rm C}^1(\R)$ with uniformly continuous derivative $\varphi'$  be given.
Then, there exists a constant~$K=K(n, m, s,p,\rho, \|\varphi\|_{{\rm C}^1},   \|f\|_{W^s_p})$ such that
\begin{align} 
\| \varphi B_{n,m}^0(f)[\oo]-B_{n,m}^0(f)[\varphi \oo]\|_{W^{s-2}_{p}}\leq K \|\oo\|_{W^{ s-2-\rho}_{p}}\quad\text{for all  $\oo\in W^{s-2}_{p}(\R).$}\label{Comm1}
\end{align}

 \end{lemma}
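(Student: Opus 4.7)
Since the target space $W^{s-2}_p$ has negative exponent, my plan is to transfer \eqref{Comm1} to a smoothing estimate for the commutator $C := \varphi B^0_{n,m}(f)[\,\cdot\,] - B^0_{n,m}(f)[\varphi\,\cdot\,]$ on the positive-exponent space $W^{2-s}_{p'}$, using duality. As observed in the proof of Lemma~\ref{L:MP3}, the formal $L_2$-adjoint of $B^0_{n,m}(f)$ is $-B^0_{n,m}(f)$, so the $L_2$-adjoint of $C$ is $-C$. In view of the dualities $W^{s-2}_p\cong(W^{2-s}_{p'})'$ and $W^{s-2-\rho}_p\cong(W^{2-s+\rho}_{p'})'$, together with the density of $C_0^\infty(\R)$, and since $\rho<(s-1-1/p)/2$ places both $2-s$ and $2-s+\rho$ in the range $(0,1-1/p)$ to which Lemma~\ref{L:MP2} applies, estimate \eqref{Comm1} reduces to
\[
\|C(\psi)\|_{W^{2-s+\rho}_{p'}}\leq K\,\|\psi\|_{W^{2-s}_{p'}}\qquad\text{for every }\psi\in W^{2-s}_{p'}(\R).
\]

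For $\psi\in C_0^\infty(\R)$ the $C^1$-regularity of $\varphi$ removes the principal-value cancellation and yields the absolutely convergent representation
\[
C(\psi)(x)=\int_\R\frac{\delta_{[x,y]}\varphi}{y}\cdot \frac{\prod_{i=1}^n(\delta_{[x,y]}f/y)}{\prod_{i=1}^m[1+(\delta_{[x,y]}f/y)^2]}\cdot\psi(x-y)\,dy,
\]
where $|\delta_{[x,y]}\varphi/y|\leq\|\varphi'\|_\infty$. Treating this factor as an additional uniformly bounded weight and invoking Lemma~\ref{L:MP0} gives the endpoint bound $\|C(\psi)\|_{p'}\leq C\|\psi\|_{p'}$, with $C$ depending only on $n,m,\|\varphi\|_{C^1},\|f\|_{W^s_p}$.

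To upgrade to the Sobolev bound, I would estimate the seminorm $[C(\psi)]_{W^{2-s+\rho}_{p'}}^{p'}$ via translation differences and split $C(\psi)-\tau_\xi C(\psi)$ into three groups of terms (mirroring the decomposition~\eqref{spr3}): (i) those carrying a translation difference of $\psi$, controlled via Lemma~\ref{L:AL0} and the $W^{2-s}_{p'}$-regularity of $\psi$; (ii) those carrying a translation difference of $f$, handled as in the proof of Lemma~\ref{L:MP2} using $f\in W^s_p$; and (iii) those carrying a translation difference of $\varphi$, for which the identity $\delta_{[x,y]}\varphi-\delta_{[x-\xi,y]}\varphi = y\int_0^1\bigl(\varphi'(x-ty)-\varphi'(x-\xi-ty)\bigr)dt$ provides simultaneous control by $2\|\varphi'\|_\infty$ and by the modulus of continuity $\omega_{\varphi'}(|\xi|)$ of $\varphi'$. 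A mollification-plus-interpolation argument --- writing $\varphi=\varphi_\e+(\varphi-\varphi_\e)$ with $\varphi_\e$ a standard mollification at scale~$\e>0$, so that $\varphi_\e\in C^2$ with $\|\varphi_\e''\|_\infty=O(1/\e)$ while $\|(\varphi-\varphi_\e)'\|_\infty=o(1)$ as $\e\to0$ --- converts these ingredients into the quantitative gain of $\rho$ derivatives required in the displayed bound above.

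The hardest step is (iii): since $\varphi'$ is only uniformly continuous, $\omega_{\varphi'}$ need not have any polynomial decay, so a direct estimate of $\omega_{\varphi'}(|\xi|)^{p'}\cdot|\xi|^{-1-(2-s+\rho)p'}$ near $\xi=0$ is not integrable. The mollification scheme circumvents this by replacing a single scale with a family of scales, balancing a commutator with $\varphi_\e\in C^2$ that gains a full derivative at cost $O(1/\e)$ against a residual $L_{p'}$-commutator of small norm $O(\omega_{\varphi'}(\e))$; the margin $\rho<(s-1-1/p)/2$ supplies the slack needed to optimise in~$\e$ and close the estimate.
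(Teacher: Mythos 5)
Your proposal diverges from the paper's route, and I believe the central step has a gap that does not close.

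The paper's proof is short: it records two endpoint operator bounds for the commutator $C:=\varphi B^0_{n,m}(f)[\,\cdot\,]-B^0_{n,m}(f)[\varphi\,\cdot\,]$, namely a crude bound $C\in\kL(W^{s-2-\ov\rho}_p(\R))$ (from Lemma~\ref{L:MP3} applied with $s$ replaced by $s-\ov\rho$, using that $\varphi\in{\rm C}^1(\R)$ is a pointwise multiplier for $W^{s-2-\ov\rho}_p(\R)$) and a smoothing bound $C\in\kL(L_p(\R),W^1_p(\R))$ cited from \cite[Lemma 4.4]{AbMa20x}, and then interpolates with parameter $\rho$ and the auxiliary choice $\ov\rho=(3-s)\rho/(1-\rho)$ to land exactly on $\kL(W^{s-2-\rho}_p(\R),W^{s-2}_p(\R))$. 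The constraint $\rho<(s-1-1/p)/2$ is what keeps $\ov\rho<s-1-1/p$, so that the crude endpoint is available; it has nothing to do with the modulus of continuity of $\varphi'$.

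Your duality reduction to $\|C(\psi)\|_{W^{2-s+\rho}_{p'}}\lesssim\|\psi\|_{W^{2-s}_{p'}}$ is fine, and so is the $L_{p'}$ endpoint (indeed $C(\psi)=B_{n+1,m}(f,\ldots,f)[f,\ldots,f,\varphi,\psi]$, so Lemma~\ref{L:MP0} applies directly). The gap is in step (iii), where you estimate the fractional seminorm by translation differences and propose to handle the $\varphi$-difference term by mollification. For that term the pointwise bound is $\|C_3(\xi,\psi)\|_{p'}\lesssim\min(|\xi|/\e+\omega_{\varphi'}(\e),\ 2\|\varphi'\|_\infty)\|\psi\|_{p'}$, and optimising in $\e$ yields $\e\sim|\xi|$, i.e.\ a bound no better than $\omega_{\varphi'}(|\xi|)$. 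Since $\varphi'$ is only uniformly continuous, $\omega_{\varphi'}(|\xi|)$ may decay slower than any power of $|\xi|$ (e.g.\ $1/\log(1/|\xi|)$), and then
\[
\int_0^1 \frac{\omega_{\varphi'}(\xi)^{p'}}{\xi^{1+(2-s+\rho)p'}}\,d\xi
\]
diverges. The "slack" from $\rho<(s-1-1/p)/2$ does not rescue this: the obstruction is $\omega_{\varphi'}$, not the position of $\rho$. A global split $C=C_{\varphi_\e}+C_{\varphi-\varphi_\e}$ does not help either, because the residual $\varphi-\varphi_\e$ is no smoother than $\varphi$ and any bound on $C_{\varphi-\varphi_\e}$ mapping $W^{2-s}_{p'}\to W^{2-s+\rho}_{p'}$ is precisely what you are trying to prove. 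Put differently: the gain of $\rho$ derivatives at the exact level $W^{2-s}_{p'}$ cannot be obtained by directly estimating translation differences; you need an operator-interpolation argument between a bounded endpoint on a space strictly above $W^{2-s}_{p'}$ and the full one-derivative smoothing bound, which is exactly what the paper does. If you want to make your approach work, the $L_{p'}\to W^1_{p'}$ commutator smoothing bound (i.e.\ \cite[Lemma 4.4]{AbMa20x}) must be proved separately, and then the result follows by interpolation; the fractional-seminorm estimate at intermediate regularity is not directly accessible.
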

\begin{proof}
Let $\ov\rho:=(3-s)\rho/(1-\rho)$.
 We point out that $\ov\rho\in(\rho,s-1-1/p)$ and that $\varphi\in{\rm C}^1(\R)$ is a pointwise multiplier for  $W^{ s-2-\ov\rho}_p(\R)$.
Lemma~\ref{L:MP3} (with~${s=s-\ov\rho}$) then yields 
\begin{align} 
\| \varphi B_{n,m}^0(f)[\oo]-B_{n,m}^0(f)[\varphi \oo]\|_{W^{ s-2-\ov\rho}_{p}}\leq K \|\oo\|_{W^{  s-2-\ov\rho}_{p}},\qquad \oo\in W^{  s-2-\ov\rho}_{p}(\R). \label{FEFa}
\end{align}
Besides, in view of \cite[Lemma 4.4]{AbMa20x}, we also have
\begin{align} 
\| \varphi B_{n,m}^0(f)[\oo]-B_{n,m}^0(f)[\varphi \oo]\|_{W^1_p}\leq K \|\oo\|_{p}, \qquad \oo\in L_p(\R). \label{FEFb}
\end{align}
Noticing that $W^{(1-\rho)(  s-2-\ov\rho )}_p(\R)=W^{ s-2-\ov\rho}_p(\R) $ and $W^{(1-\rho)( s-2-\ov\rho)+\rho}_p(\R)= W^{s-2}_p(\R)$, the interpolation property \eqref{IP}
 together with \eqref{FEFa} and \eqref{FEFb} leads us to the desired claim.
\end{proof}

The first important result of this section is the following lemma. 
\begin{lemma}\label{L:AL3} 
Let $n,\, m \in \N$, $p\in(1,2],$ $s\in(1+1/p,2)$, $\rho\in(0,(s-1-1/p)/2)$, ${\nu\in(0,\infty)}$, 
and~$f\in W^s_p(\R)$ be given.
For sufficiently small $\e\in(0,1)$   there exists a constant $K$ that depends on $\e,$ $ n, $ $ m,$ $  s, $ $ p,$ $ \rho, $ and $\|f\|_{W^s_p}$  such that
 \begin{equation}\label{LC1}
\Big\|\pi_j^\e B_{n,m}^0(f)[ \oo]-\frac{(f'(x_j^\e))^n}{[1+(f'(x_j^\e))^2]^m}B_{0,0}[\pi_j^\e \oo]\Big\|_{W^{ s-2}_p}\leq \nu \|\pi_j^\e \oo\|_{W^{ s-2}_p}+K\| \oo\|_{W^{s-2-\rho}_p}
 \end{equation}
for all $|j|\leq N-1$ and  $\oo\in W^{s-2}_p(\R)$.
\end{lemma}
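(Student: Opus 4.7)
The strategy has three stages: a commutator reduction, identification of the target as an operator evaluated on an affine function, and a freezing estimate for the difference.

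First I would apply the commutator estimate of Lemma~\ref{L:AL1} with $\varphi:=\pi_j^\varepsilon$ (which is smooth with uniformly continuous derivative on each bounded $\varepsilon$-family) to obtain
\[
\pi_j^\varepsilon B^0_{n,m}(f)[\omega]=B^0_{n,m}(f)[\pi_j^\varepsilon \omega]+R_j[\omega],\qquad \|R_j[\omega]\|_{W^{s-2}_p}\leq K\|\omega\|_{W^{s-2-\rho}_p}.
\]
This absorbs the outer cutoff into the input and reduces the task to controlling $B^0_{n,m}(f)[\pi_j^\varepsilon\omega]-c_j B_{0,0}[\pi_j^\varepsilon\omega]$, where $c_j:=(f'(x_j^\varepsilon))^n/[1+(f'(x_j^\varepsilon))^2]^m$.

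Next I would note that if $f_j(x):=f'(x_j^\varepsilon)\,x$ denotes any affine function with slope $f'(x_j^\varepsilon)$, then $\delta_{[x,y]}f_j/y\equiv f'(x_j^\varepsilon)$ and hence $c_j B_{0,0}[\eta]=B^0_{n,m}(f_j)[\eta]$. So the problem becomes bounding $[B^0_{n,m}(f)-B^0_{n,m}(f_j)][\pi_j^\varepsilon\omega]$ in $W^{s-2}_p$. Using the multilinearity of $B_{n,m}$ in its $b$-arguments together with the telescoping identity \eqref{spr3} for its $a$-arguments, I would expand this difference as a finite sum of terms of the form $B_{n',m'}(\ldots)[\ldots,g,\ldots,\pi_j^\varepsilon\omega]$ with $g:=f-f_j$ appearing in exactly one slot. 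Because $f_j$ is affine and therefore not in $W^s_p$, I would not apply Lemma~\ref{L:MP3} with $f_j$ directly; instead I would split $g=\chi_j^\varepsilon g+(1-\chi_j^\varepsilon)g$. The ``local'' piece $\chi_j^\varepsilon g$ belongs to $W^s_p$ and, crucially, satisfies $\|(\chi_j^\varepsilon g)'\|_\infty=O(\varepsilon^{s-1-1/p})$ by virtue of the Sobolev embedding $W^s_p\hookrightarrow C^{s-1/p}$, so the corresponding term can be made arbitrarily small in $\mathcal{L}(W^{s-2}_p)$ by choosing $\varepsilon$ small enough. The ``global'' piece $(1-\chi_j^\varepsilon)g$ lives away from $\supp\pi_j^\varepsilon$, so in the associated integral the variables $x-y\in\supp\pi_j^\varepsilon$ and the support of $(1-\chi_j^\varepsilon)g$ are separated by a distance $\gtrsim\varepsilon$; the kernel is then smooth on this region and the resulting operator sends $W^{s-2-\rho}_p$ continuously into $W^{s-2}_p$, contributing to the $K\|\omega\|_{W^{s-2-\rho}_p}$ term.

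The main obstacle lies in making the local contribution rigorously small in the correct norm. A direct application of Lemma~\ref{L:MP3} only controls the term by $C\|g\|_{W^s_p}\|\pi_j^\varepsilon\omega\|_{W^{s-2}_p}$, in which $\|g\|_{W^s_p}$ is not small as $\varepsilon\to 0$. To extract the factor $\nu$, I would refine the bound using multiplier estimates of the type \eqref{EQ:Mul2}, separating the Lipschitz norm $\|g\|_{C^1}=O(\varepsilon^{s-1-1/p})$ from the Sobolev norm $\|g\|_{W^s_p}=O(1)$; the prefactor of $\|\pi_j^\varepsilon\omega\|_{W^{s-2}_p}$ then acquires a small coefficient (from $\|g\|_{C^1}$) at the cost of an additional term involving the weaker norm $\|\pi_j^\varepsilon\omega\|_{W^{s-2-\rho}_p}\leq C\|\omega\|_{W^{s-2-\rho}_p}$, which is exactly what \eqref{LC1} allows. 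Combining these pieces and summing over the finitely many expansion terms yields \eqref{LC1} provided $\varepsilon$ is chosen sufficiently small relative to $\nu$ and $\|f\|_{W^s_p}$.
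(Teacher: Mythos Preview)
Your high-level plan (commutator reduction via Lemma~\ref{L:AL1}, telescoping through the affine function $f_j=f'(x_j^\e)\id_\R$, and exploiting the smallness of $f'-f'(x_j^\e)$ on $\supp\chi_j^\e$) matches the paper's. However, the decisive step --- extracting the factor $\nu$ in front of $\|\pi_j^\e\omega\|_{W^{s-2}_p}$ --- has a real gap. You propose to obtain this by ``multiplier estimates of the type \eqref{EQ:Mul2}'', but \eqref{EQ:Mul2} governs \emph{pointwise} multiplication $\varphi h$ in $W^{r-1}_p$; it says nothing about how the argument $g$ enters the nonlocal operators $B_{n,m}$ through the difference quotients $\delta_{[x,y]}g/y$. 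No estimate of the form
\[
\|B_{n,m}(f,\dots,f)[f,\dots,f,g,\omega]\|_{W^{s-2}_p}\le C\|g'\|_\infty\|\omega\|_{W^{s-2}_p}+C\|g\|_{W^s_p}\|\omega\|_{W^{s-2-\rho}_p}
\]
is available from Lemma~\ref{L:MP3}, and your sketch gives no mechanism to prove one. Your support-separation argument for the ``global'' piece $(1-\chi_j^\e)g$ is also problematic: when $x-y\in\supp\pi_j^\e$ one has $\delta_{[x,y]}[(1-\chi_j^\e)g]=(1-\chi_j^\e)g(x)$, which grows linearly in $|x|$ and is not a multiplier for $W^{s-2}_p$, so the claimed smoothing from $W^{s-2-\rho}_p$ into $W^{s-2}_p$ is not justified.

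The paper closes this gap via Lemma~\ref{L:AL2}, whose proof is precisely the missing idea: pass by duality to $W^{2-s}_{p'}$, split the test function $\varphi=(\varphi-\varphi_\delta)+\varphi_\delta$ by mollification, send the smooth part $\varphi_\delta$ to the lower-order space $W^{2-\tilde s}_{p'}$ via Lemma~\ref{L:MP2}, and for the rough part $\varphi-\varphi_\delta$ work at the $L_{p'}$ level where Lemma~\ref{L:MP0} already carries $\|b_i'\|_\infty$ (not $\|b_i\|_{W^s_p}$) and the replacement of $f-f'(x_j^\e)\id_\R$ by a global Lipschitz function $F_\e$ with $\|F_\e'-f'(x_j^\e)\|_\infty$ small makes the constant $\le \nu$. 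The seminorm $[\cdot]_{W^{2-s}_{p'}}$ is then handled by a translation decomposition and the same $F_\e$ trick. This duality--mollifier argument, not a multiplier bound like \eqref{EQ:Mul2}, is what produces the $\nu/K$ splitting in the negative-index space.
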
 
\begin{proof}
Taking advantage of  the relation $\chi_j^\e\pi_j^\e=\pi_j^\e$,  we have
\begin{align*}
\pi_j^\e B_{n,m}^0(f)[\oo]-\frac{(f'(x_j^\e))^n}{[1+(f'(x_j^\e))^2]^m}B_{0,0}[\pi_j^\e \oo]=T_{a}+ T_{b},
\end{align*}
 where 
 \begin{align*}
T_{a}&:=\frac{(f'(x_j^\e))^n}{[1+(f'(x_j^\e))^2]^m}(\chi_j^\e B_{0,0}[\pi_j^\e \oo]- B_{0,0}[\chi_j^\e(\pi_j^\e \oo)])- \chi_j^\e (B_{n,m}^0 (f)[\pi_j^\e \oo]-    \pi_j^\e B_{n,m}^0 (f)[ \oo  ]),\\[1ex]
T_{b}&:=  \chi_j^\e B_{n,m}^0 (f)[\pi_j^\e \oo]-\frac{(f'(x_j^\e))^n}{[1+(f'(x_j^\e))^2]^m}\chi_j^\e B_{0,0}[\pi_j^\e \oo].
\end{align*}
 Since $\pi_j^\e$ is a pointwise multiplier for $W^{s-2-\rho}_p(\R)$ and $\chi_j^\e$ is a pointwise multiplier for~$W^{s-2}_p(\R)$, Lemma \ref{L:AL1}   leads to
\begin{equation}\label{C2a}
\|T_{a}\|_{W^{ s-2}_p}\leq K\|\oo\|_{W^{ s-2-\rho}_p}.
\end{equation}

It remains to estimate $T_{b}.$ 
Using \eqref{spr3} and   the relation  ${f'(x_j^\e)=\delta_{[x,y]} (f'(x_j^\e)\id_{\R})/y},$ we~get
\begin{align*}
T_{b}&=\sum_{k=0}^{n-1}(f'(x_j^\e))^{n-k-1}\chi_j^\e B_{k+1,m}(f,\ldots,f)[f,\ldots,f, f-f'(x_j^\e)\id_{\R},\pi_j^\e \oo]\\[1ex]
&\hspace{0,45cm}- \sum_{k=0}^{m-1}\frac{(f'(x_j^\e))^{n}}{[1+ ( f'(x_j^\e))^2]^{m-k}}\chi_j^\e B_{2,k+1}(f,\ldots,f)[f-f'(x_j^\e)\id_{\R}, f+f'(x_j^\e)\id_{\R},\pi_j^\e \oo].
\end{align*} 
If $\e$ is sufficiently small,  Lemma~\ref{L:AL2} (with $\wt s=s-\rho$) implies that
\begin{align}\label{C2b}
\|T_b\|_{W^{s-2}_p}\leq \nu \|\pi_j^\e \oo\|_{W^{ s-2}_p}+K\| \oo\|_{W^{ s-2-\rho}_p}.
\end{align}
The estimate \eqref{LC1} follows from \eqref{C2a} and  \eqref{C2b}.
\end{proof}

In Lemma~\ref{L:MOL} we gather some classical properties of mollifiers.
\begin{lemma}\label{L:MOL}
Let $\eta_\delta(x):=\delta^{-1}\eta(x/\delta)$, $x\in\R$ and $\delta>0$, where $\eta\in {\rm C}^\infty_0(\R) $ is a nonnegative function with
\[
\int_\R\eta(x)\, dx=1.
\]
Given  $\delta>0$ and $f\in L_p(\R)$, with $p\in[1,\infty)$,  let $f_\delta:=f*\eta_\delta$.
The following properties hold true. 
\begin{itemize}
\item[(i)] Given $r\geq0,$ it holds $\sup_{\delta>0} \big\|[f\mapsto f_\delta]\|_{\kL(W^r_p(\R))}= 1;$
\item[(ii)]  Given $r\geq0$, there exists a constant $C>0$  such that 
\begin{align}
&\text{$\|f_\delta\|_{W^{r}_p}\leq C\delta^{r'-r}\|f\|_{W^{r'}_p}$\qquad  for all ${f\in W^{r'}_p(\R)}$, $r'\in[0,r]$, $\delta\in(0,1]$.}\label{MO1}
\end{align}
\item[(iii)] There exists a constant $C>0$  such that 
\begin{align}
&\text{$\|f_\delta-f\|_{W^{r'}_p}\leq C\delta^{r-r'}\|f\|_{W^r_p}$\qquad for all ${f\in W^r_p(\R)}$, $0\leq r'\leq r\leq1$, $\delta>0$.} \label{MO2}
\end{align}
\end{itemize}
\end{lemma}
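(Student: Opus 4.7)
The plan is to reduce each of (i)--(iii) to endpoint estimates and then invoke real interpolation via \eqref{IP}. The common ingredients throughout are Young's convolution inequality, the fact that $\eta_\delta$ is a probability density, and the commutation identities $(f*\eta_\delta)^{(k)} = f^{(k)} * \eta_\delta = f*\eta_\delta^{(k)}$ and $\tau_\xi(f*\eta_\delta) = (\tau_\xi f)*\eta_\delta$. For (i), Young immediately yields $\|f^{(k)}*\eta_\delta\|_p \leq \|f^{(k)}\|_p$ for each integer $k \leq [r]$, and the translation identity combined with Young gives, pointwise in $\xi$, the estimate $\|\tau_\xi f_\delta^{([r])} - f_\delta^{([r])}\|_p \leq \|\tau_\xi f^{([r])} - f^{([r])}\|_p$, so that $[f_\delta]_{W^r_p} \leq [f]_{W^r_p}$. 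Summation gives the contraction bound $\|f_\delta\|_{W^r_p} \leq \|f\|_{W^r_p}$, and the supremum equals $1$ because $f_\delta \to f$ in $W^r_p$ as $\delta \to 0^+$ for any nonzero $f$.

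For (ii), the central endpoint is $r' = 0$, $r = k \in \N$: from $f_\delta^{(j)} = f * \eta_\delta^{(j)}$ and the scaling $\|\eta_\delta^{(j)}\|_1 = \delta^{-j}\|\eta^{(j)}\|_1$, Young yields $\|f_\delta\|_{W^k_p} \leq C\delta^{-k}\|f\|_p$ for $\delta \in (0,1]$. Combining this with the contraction bound from (i) and interpolating the source via $(L_p(\R), W^k_p(\R))_{\theta,p} = W^{\theta k}_p(\R)$ gives $\|f_\delta\|_{W^k_p} \leq C\delta^{r'-k}\|f\|_{W^{r'}_p}$ for every $r' \in [0,k]$. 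A second interpolation step, this time on the target (pairing this estimate with the contraction $\|f_\delta\|_{W^{r'}_p}\leq \|f\|_{W^{r'}_p}$), upgrades this to $\|f_\delta\|_{W^r_p} \leq C\delta^{r'-r}\|f\|_{W^{r'}_p}$ for all $0 \leq r' \leq r$; the excluded integer exponents in \eqref{IP} are handled either by a direct commutation argument or by a density argument.

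For (iii), the endpoint $r' = r$ is the trivial bound $\|f_\delta - f\|_{W^r_p} \leq 2\|f\|_{W^r_p}$ coming from (i). The core of the argument is the other endpoint $r' = 0$: writing
\[
f_\delta(x) - f(x) = \int_\R \eta(z)\,\bigl(\tau_{\delta z}f - f\bigr)(x)\, dz,
\]
applying Jensen's inequality pointwise to the probability measure $\eta(z)\,dz$, then integrating in $x$ and substituting $\xi = \delta z$, gives
\[
\|f_\delta - f\|_p^p \leq \int_\R \eta_\delta(\xi)\,\|\tau_\xi f - f\|_p^p\, d\xi \leq \frac{\|\eta\|_\infty}{\delta}\int_{|\xi|\leq C\delta} |\xi|^{1+rp}\cdot\frac{\|\tau_\xi f - f\|_p^p}{|\xi|^{1+rp}}\, d\xi \leq C\delta^{rp}[f]_{W^r_p}^p
\]
for any $r \in (0,1)$ (here $C$ depends on $\supp\eta$); the case $r = 1$ follows instead from the classical bound $\|\tau_y f - f\|_p \leq |y|\|f'\|_p$ inserted into the Jensen estimate, and $r = 0$ is trivial. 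Real interpolation of the two endpoint bounds via \eqref{IP} then produces $\|f_\delta - f\|_{W^{r'}_p} \leq C\delta^{r-r'}\|f\|_{W^r_p}$ for all $r' \in [0,r]$.

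The main obstacle is the $r' = 0$ estimate in (iii) at fractional $r$: an $f \in W^r_p(\R)$ only enjoys integrated control of $\|\tau_\xi f - f\|_p$ against $|\xi|^{-1-rp}\,d\xi$, not pointwise in $\xi$. The Jensen step above, which converts the probability-weighted average of translation differences into a genuine $L_p$-average of $\|\tau_\xi f - f\|_p^p$, is precisely the device that transforms this integral control into the required $O(\delta^r)$ smoothing rate; everything else is a routine application of Young's inequality and \eqref{IP}.
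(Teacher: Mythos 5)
The paper does not give a proof of Lemma~\ref{L:MOL} at all: it states these facts without argument, describing them as ``classical properties of mollifiers.'' So there is nothing to compare against; I can only assess your proof on its own terms, and it is correct. The Young/contraction argument for~(i), the two-stage real interpolation in~(ii) that first interpolates the source space between $L_p$ and $W^k_p$ and then the target space between $W^{r'}_p$ and $W^k_p$, and the Jensen-plus-substitution device that converts the representation $f_\delta - f = \int \eta(z)(\tau_{\delta z}f - f)\,dz$ into an $O(\delta^r)$ bound against $[f]_{W^r_p}$ for fractional $r$ are all sound and correctly executed. The integer-exponent exclusion in~\eqref{IP} that you flag is real, but the workaround you name (direct Young commutation for integer $r$ or $r'$) is valid; and a small point worth keeping in mind is that the lemma wants the constants uniform in $r'$, which holds here because the two endpoints are handled directly and the interpolation constant stays bounded on compact subsets of the interpolation parameter. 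Your identification of the Jensen step as the genuine obstacle, and as the mechanism that converts the merely integrated control of $\|\tau_\xi f - f\|_p$ available at fractional regularity into a pointwise-in-$\delta$ rate, is exactly the correct diagnosis.
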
\medskip

Lemma~\ref{L:AL2} below provides the key estimate in the proof of Lemma~\ref{L:AL3}.

\begin{lemma}\label{L:AL2} 
Let $n,\, m \in \N$, $n\geq 1$, $p\in(1,2],$ $ 1+1/p<\wt s<s<2$,  $\nu\in(0,\infty)$, 
and~${f\in W^s_p(\R)}$ be given.
For sufficiently small $\e\in(0,1)$ there exists a positive constant $K=K(\e,\, n,\, m,\, s,\, \wt s, \|f\|_{W^s_p})$  such that
 \begin{equation}\label{LCw}
\|\chi_j^\e B_{n,m}(f,\ldots,f)[f,\ldots,f, f-f'(x_j^\e)\id_{\R}, \pi_j^\e \oo]\|_{W^{ s-2}_p}\leq \nu \|\pi_j^\e \oo\|_{W^{ s-2}_p}+K\| \oo\|_{W^{ \wt s-2}_p}
 \end{equation}
for all $|j|\leq N-1$ and  $\oo\in W^{s-2}_p(\R)$.
\end{lemma}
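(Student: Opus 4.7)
The plan is to exploit the fact that, on $\supp\chi_j^\varepsilon$, the argument $g:=f-f'(x_j^\varepsilon)\mathrm{id}_{\R}$ has a very small derivative. Indeed, since $f\in W^s_p(\R)\hookrightarrow {\rm C}^{1+\alpha}(\R)$ with $\alpha:=s-1-1/p>0$, we have
\[
\|\chi_j^\varepsilon(f'-f'(x_j^\varepsilon))\|_\infty\leq [f']_{{\rm C}^\alpha}\cdot(3\varepsilon/2)^\alpha =O(\varepsilon^\alpha),
\]
which becomes arbitrarily small as $\varepsilon\to 0$. The obstruction is that $g$ itself is not in $W^s_p(\R)$ because of the unbounded linear term, so Lemma~\ref{L:MP3} cannot be applied directly.

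First, I would introduce a localized $W^s_p$-substitute for $g$. Choose a smooth cutoff $\hat\chi_j$ equal to $1$ on a $5\varepsilon$-neighborhood of $x_j^\varepsilon$ and supported in a $10\varepsilon$-neighborhood of $x_j^\varepsilon$, and set
\[
g_j(x):=\hat\chi_j(x)\big(f(x)-f(x_j^\varepsilon)-f'(x_j^\varepsilon)(x-x_j^\varepsilon)\big)\in W^s_p(\R).
\]
Multilinearity of $B_{n,m}$ in its $b_i$-slots decomposes the left-hand side into a main term with $b_n=g_j$ and an error term with $b_n=g-g_j$. Since on $\supp\hat\chi_j$ the function $g-g_j$ is the constant $f(x_j^\varepsilon)-f'(x_j^\varepsilon)x_j^\varepsilon$, and since $B_{n,m}$ only sees differences $\delta_{[x,y]} b_n$, the integrand in the error term vanishes whenever both $x$ and $x-y$ lie in $\supp\chi_j^\varepsilon$; this is precisely the relevant region for smooth $\omega$ supported via $\pi_j^\varepsilon$. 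To extend this vanishing to $\omega\in W^{s-2}_p(\R)$, one replaces $g-g_j$ by $(1-\hat\chi_j)(g-c)$ multiplied by a further cutoff (to ensure the resulting argument lies in $W^s_p(\R)$), and applies Lemma~\ref{L:MP3} with $s$ replaced by $\tilde s$; this yields an error bounded by $K\|\omega\|_{W^{\tilde s-2}_p}$.

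Second, I would estimate the main term $\chi_j^\varepsilon T_0[\pi_j^\varepsilon \omega]$ with $T_0[\omega]:=B_{n,m}(f,\ldots,f)[f,\ldots,f,g_j,\omega]$, by interpolating two bounds for $\chi_j^\varepsilon T_0$. The chain and product rules combined with $\|(\hat\chi_j)'\|_\infty\lesssim\varepsilon^{-1}$ and $\|\hat\chi_j\cdot(f-f(x_j^\varepsilon)-f'(x_j^\varepsilon)(\cdot-x_j^\varepsilon))\|_\infty\lesssim\varepsilon^{1+\alpha}$ give $\|g_j'\|_\infty=O(\varepsilon^\alpha)$, so Lemma~\ref{L:MP0} (with $p=p$) produces
\[
\|\chi_j^\varepsilon T_0\|_{\kL(L_p(\R))}\leq C\|g_j'\|_\infty\leq C\varepsilon^\alpha,
\]
while Lemma~\ref{L:MP3} (with $s$ replaced by $\tilde s$) produces
\[
\|\chi_j^\varepsilon T_0\|_{\kL(W^{\tilde s-2}_p(\R))}\leq K_1,
\]
where $K_1=K_1(\varepsilon,n,m,\tilde s,p,\|f\|_{W^s_p})$ may be large. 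Since the interpolation identity \eqref{IP} gives
\[
(L_p(\R),W^{\tilde s-2}_p(\R))_{\theta,p}=W^{s-2}_p(\R)\qquad\text{for}\qquad \theta=\frac{s-2}{\tilde s-2}\in(0,1),
\]
standard interpolation yields $\|\chi_j^\varepsilon T_0\|_{\kL(W^{s-2}_p)}\leq C\varepsilon^{\alpha(1-\theta)}K_1^\theta$, which can be made smaller than the prescribed $\nu$ by choosing $\varepsilon$ sufficiently small.

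The main obstacle is the distributional extension in Step~1: arguing rigorously that the operator with $b_n=g-g_j$ vanishes (or is controlled by $\|\omega\|_{W^{\tilde s-2}_p}$) requires a careful replacement of this non-$W^s_p$ argument with a compactly supported substitute while preserving its behavior on $\supp\chi_j^\varepsilon$, and then invoking the mapping properties from Lemma~\ref{L:MP3}. Once this is done, the interpolation argument in Step~2 produces the claimed smallness in $\varepsilon$.
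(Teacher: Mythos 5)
Your approach is genuinely different from the paper's. The paper proves this lemma by duality, pairing the operator against a test function $\varphi\in W^{2-s}_{p'}(\R)$ and then splitting $\varphi=\varphi_\delta+(\varphi-\varphi_\delta)$ with a mollifier: the high-frequency part $\varphi-\varphi_\delta$ is controlled in $L_{p'}$ using the smallness of $f'-f'(x_j^\e)$ near $x_j^\e$ (via an auxiliary frozen-derivative function $F_\e$), while the low-frequency part $\varphi_\delta$ is absorbed into the weaker $W^{\wt s-2}_p$-norm. Your proposal instead localizes the $b_n$-slot argument $g=f-f'(x_j^\e)\id_\R$ with a cutoff $\hat\chi_j$ and interpolates operator norms between $\kL(L_p)$ and $\kL(W^{\wt s-2}_p)$. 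That is a legitimate alternative strategy, but as written it has two genuine gaps.

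\textbf{Step 1 (the error term).} The ``further cutoff'' manoeuvre is not coherent: $(g-g_j)$ grows linearly at infinity, and any compactly supported replacement still differs from $g-g_j$ by a function that is not in $W^{\wt s}_p(\R)$, so Lemma~\ref{L:MP3} cannot be invoked to bound the remaining difference. In fact the cleaner observation — which you come close to but do not make — is that the error operator is \emph{identically zero}: for $\oo\in {\rm C}^\infty_0(\R)$ the integrand vanishes because $\hat\chi_j\equiv 1$ on a neighbourhood containing both $x\in\supp\chi_j^\e$ and $x-y\in\supp\pi_j^\e$, and the statement extends to $\oo\in W^{s-2}_p(\R)$ by density once one knows that both $\chi_j^\e B_{n,m}(\ldots)[\ldots,g,\pi_j^\e\cdot]$ (interpreted via the identity $B_{n,m}[\ldots,\id_\R,\cdot]=B_{n-1,m}[\ldots,\cdot]$, since $g\notin W^s_p(\R)$) and $\chi_j^\e B_{n,m}(\ldots)[\ldots,g_j,\pi_j^\e\cdot]$ are bounded on $W^{s-2}_p(\R)$. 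Your proposal does not make this rigorous.

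\textbf{Step 2 (the interpolation).} The bound $C\e^{\alpha(1-\theta)}K_1^\theta$ only helps if $K_1=K_1(\e)$ does not blow up as $\e\to 0$ at a rate that overwhelms $\e^{\alpha(1-\theta)}$. This requires a quantitative estimate of $\|g_j\|_{W^{\wt s}_p}$ as a function of $\e$ (which is nontrivial, since $\hat\chi_j$ has $\|\hat\chi_j\|_{W^{\wt s}_p}\sim\e^{1/p-\wt s}\to\infty$ and $R_j'$ has only ${\rm C}^\alpha$ regularity, so the naive Leibniz bounds produce divergent powers of $\e$), together with a bound for the cost of multiplying by $\chi_j^\e$ in the negative-order space $W^{\wt s-2}_p(\R)$, which is not a simple pointwise multiplier of uniformly bounded norm: the relevant estimate is \eqref{EQ:Mul2}, which produces a bounded leading term plus a lower-order correction that ruins a clean operator-norm interpolation. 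None of these estimates appear in your argument. Without them the claim ``which can be made smaller than the prescribed $\nu$ by choosing $\e$ sufficiently small'' is unsubstantiated — the exponent $\alpha(1-\theta)$ alone does not save you if $K_1\sim\e^{-\beta}$ with $\beta\theta\geq\alpha(1-\theta)$, and this can occur when $\wt s$ is close to $s$ (which is exactly the regime in which the lemma is later applied, with $\wt s=s-\rho$ and $\rho$ small).

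In short: the outline is plausible and conceptually interesting, but both steps need substantial additional work before they constitute a proof. The paper's duality-plus-mollification argument sidesteps these difficulties because the mollification parameter $\delta$ can be chosen after $\e$, and because the small factor comes directly from the uniform continuity of $f'$ rather than from a potentially lossy cutoff-and-interpolate chain.
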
 
\begin{proof}
Given $\varphi,\, \oo\in {\rm C}^\infty_0(\R)$, it holds  in view of $\chi_j^\e\pi_j^\e=\pi_j^\e$  and of the fact that the $L_2$-adjoint of $B_{n,m}^0(f) $  
 is the operator $-B_{n,m}^0(f) $, that
 \begin{align*}
&\hspace{-0.5cm}\langle\chi_j^\e B_{n,m}(f,\ldots,f)[f,\ldots,f, f-f'(x_j^\e)\id_{\R}, \pi_j^\e \oo]|\varphi\rangle_{W^{s-2}_p(\R)\times W^{2-s}_{p'}(\R)}\\[1ex]
&=-\langle  \pi_j^\e \oo|\chi_j^\e B_{n,m}(f,\ldots,f)[f,\ldots, f ,f-f'(x_j^\e)\id_{\R},\chi_j^\e\varphi]\rangle_{W^{s-2}_p(\R)\times W^{2-s}_{p'}(\R)}.
 \end{align*}
 Let $T_{j,\e}[\varphi]:=\chi_j^\e B_{n,m}(f,\ldots,f)[f,\ldots, f ,f-f'(x_j^\e)\id_{\R},\chi_j^\e\varphi]$. 
 We decompose below
 \begin{align}\label{Eq:DES}
  T_{j,\e}[\varphi] =T_{j,\e}^1[\varphi] +T^2_{j,\e}[\varphi],
 \end{align}
and we prove subsequently  that, if $\e$ sufficiently small, then
 \begin{align}\label{Eq:EST}
 \|T_{j,\e}^1[\varphi]\| _{W^{2-s}_{p'}}\leq \nu\|\varphi\|_{W^{2-s}_{p'}}\qquad\text{and}\qquad \|T_{j,\e}^2[\varphi]\| _{W^{2-\wt s}_{p'}}\leq K \|\varphi\|_{W^{2-s}_{p'}}
 \end{align}
 for all $\varphi\in W^{2-s}_{p'}(\R).$
Having established \eqref{Eq:EST}, we get
  \begin{align*}
|\langle  \pi_j^\e \oo|T_{j,\e}[\varphi]\rangle_{W^{s-2}_p(\R)\times W^{2-s}_{p'}(\R)}|&\leq \|\pi_j^\e\oo\|_{W^{s-2}_p}\|T_{j,\e}^1[\varphi]\| _{W^{2-s}_{p'}}
+\|\pi_j^\e\oo\|_{W^{\wt s-2}_p}\|T_{j,\e}^2[\varphi]\| _{W^{2-\wt s}_{p'}}\\[1ex]
&\leq (\nu\|\pi_j^\e\oo\|_{W^{s-2}_p}+K\|\oo\| _{W^{\wt s-2}_{p}})\|\varphi\|_{W^{2-s}_{p'}},
 \end{align*}
 and the claim \eqref{LCw}  follows via a standard density argument.

In order to define the terms in \eqref{Eq:DES}, let $\{\eta_\delta\}_{\delta>0}$ be a mollifier as in Lemma~\ref{L:MOL}. We set
  \begin{align*}
 T_{j,\e}^1[\varphi]&=\chi_j^\e B_{n,m}(f,\ldots,f)[f,\ldots, f, f-f'(x_j^\e)\id_{\R},\chi_j^\e(\varphi-\varphi_\delta)],\\[1ex]
 T_{j,\e}^2[\varphi]&=  \chi_j^\e B_{n,m}^0(f)[\chi_j^\e\varphi_\delta] -f'(x_j^\e)\chi_j^\e B_{n-1,m}^0(f)[\chi_j^\e\varphi_\delta],
  \end{align*}
  where $\delta=\delta(\e)\in(0,1]$ will be fixed later on and $\varphi_\delta:=\varphi*\eta_\delta$.
  
  Since $2-\wt s\in[0,1-1/p)$,  Lemma~\ref{L:MP2} (with $r=2-\wt s$) and the fact that~${\chi_j^\e\varphi_\delta\in W^{2-\wt s}_{p'}(\R)}$ yield $T_{j,\e}^2[\varphi]\in W^{2-\wt s}_{p'}(\R),$ 
  and together with \eqref{MO1} we obtain the  estimate
  \begin{align}\label{E0}
  \|T_{j,\e}^2[\varphi]\|_{W^{2-\wt s}_{p'}}\leq K\|\chi_j^\e\varphi_\delta\|_{W^{2-\wt s}_{p'}}\leq K\delta^{\wt s -s}\|\varphi\|_{W^{2-s}_{p'}}.
  \end{align}
  It remains to estimate    $T_{j,\e}^1[\varphi].$ 
  Therefore, we first infer from Lemma~\ref{L:MP0} and \eqref{MO2} that
  \begin{align}\label{E1}
  \|T_{j,\e}^1[\varphi]\|_{p'}\leq C \|\chi_j^\e(\varphi_\delta-\varphi)\|_{ p' }\leq C\|\varphi_\delta-\varphi\|_{p'}\leq C\delta^{2-s}\|\varphi\|_{W^{2-s}_{p'}}.
  \end{align}
  
  We now consider the seminorm $[T_{j,\e}^1[\varphi]]_{W^{2-s}_{p'}}$. 
  In view of  \eqref{spr3} we   write 
\begin{align*}
T_{j,\e}^1[\varphi]-\tau_\xi T_{j,\e}^1[\varphi]=T_{1}(\xi)+T_{2}(\xi)+ T_{3}(\xi),\qquad \xi\in\R,
\end{align*}
where
\begin{align*}
T_{1}(\xi)&:=(\chi_j^\e-\tau_\xi \chi_j^\e)\tau_\xi B_{n,m}(f,\ldots,f)[f,\ldots,f, f-f'(x_j^\e)\id_{\R},\chi_j^\e(\varphi-\varphi_\delta)],\\[1ex]
T_{2}(\xi)&:=  \chi_j^\e B_{n,m}(f,\ldots,f)[f,\ldots,f, f-f'(x_j^\e)\id_{\R},\chi_j^\e(\varphi-\varphi_\delta) -\tau_\xi (\chi_j^\e(\varphi-\varphi_\delta))],\\[1ex]
T_{3}(\xi)&:= \chi_j^\e\sum_{j=1}^{n-1}B_{n,m}(f,\ldots,f)[\underset{(j-1)-{\rm times}}{\underbrace{\tau_\xi f,\ldots,\tau_\xi f}},f-\tau_\xi f,f,\ldots, f, f-f'(x_j^\e)\id_{\R},\tau_\xi (\chi_j^\e(\varphi-\varphi_\delta))]\\[1ex]
&\hspace{0,45cm}+\chi_j^\e B_{n,m}(f,\ldots,f)[\tau_\xi f,\ldots,\tau_\xi f, f-\tau_\xi f,\tau_\xi (\chi_j^\e(\varphi-\varphi_\delta))]\\[1ex]
&\hspace{0,45cm}+\chi_j^\e\sum_{j=1}^m B_{n+2,m+1}^j [\tau_\xi f,\ldots,\tau_\xi f, \tau_\xi f-f'(x_j^\e)\id_{\R},\tau_\xi f+f,\tau_\xi f-f,\tau_\xi(\chi_j^\e(\varphi-\varphi_\delta))],
\end{align*}
with
$$B_{n+2,m+1}^j:=B_{n+2,m+1}(\underset{j-{\rm times}}{\underbrace{f,\ldots,f}},\tau_\xi f,\ldots,\tau_\xi f).$$
Lemma \ref{L:MP0} (with $p=p'$) together wit \eqref{MO2} yields
\begin{equation}\label{2E1}
\|T_{1}(\xi)\|_{p'}\leq C\|\chi_j^\e-\tau_\xi\chi_j^\e\|_{\infty}\|\chi_j^\e(\varphi-\varphi_\delta)\|_{p'}\leq C\delta^{2-s} \|\chi_j^\e-\tau_\xi\chi_j^\e\|_{W^1_{p'}}\|\varphi\|_{W^{2-s}_{p'}}.
\end{equation}
With regard to $T_3(\xi)$ we fix $\rho\in(s-\wt s,\min\{2-\wt s,2(s-\wt s)\})$. 
Combining Lemma \ref{L:MP1} (with~$s=\wt s$ and $r=2-\wt s-\rho\in(0,1-1/p)$)  and \eqref{MO2}  we get
\begin{equation}\label{2E2}
\begin{aligned}
\| T_{3}(\xi)\|_{p'}&\leq C\|f-\tau_\xi f\|_{W_{p'}^{2\wt s-1 -\frac{2}{p}+\rho}}\| \chi_j^\e(\varphi-\varphi_\delta)\|_{W^{2-\wt s-\rho}_{p'}}\\[1ex]
&\leq K\delta^{\rho+\wt s-s}\|f-\tau_\xi f\|_{W_{p'}^{2s-1-2/p}}\|\varphi\|_{W^{2-s}_{p'}}.
\end{aligned}
\end{equation}
In remains to estimate  the function $T_{2}(\xi),$ $\xi\in\R.$ 
To this end we  choose $a_{j,\e}\in\R$  such that ${\supp\chi_j^\e\subset[a_{j,\e}-3\e/2,a_{j,\e}+3\e/2]}$ 
and denote by $F_\e$  the Lipschitz continuous function  defined by $F_\e=f$ on ${[a_{j,\e}-2\e,a_{j,\e}+2\e]}$ and $F_\e'=f'(x_j^\e)$ on $\R\setminus[a_{j,\e}-2\e,a_{j,\e}+2\e]$.
Given~$\xi\in\R$ with $|\xi|\geq\e/2,$  it follows from Lemma~\ref{L:MP0} and~\eqref{MO2} that
\begin{align}\label{2E21}
\|T_{2}(\xi)\|_{p'}\leq   C\|\varphi-\varphi_\delta\|_{p'}\leq C\delta^{2-s}\|\varphi\|_{W^{2-s}_{p'}} .
\end{align}
If $|\xi|<\e/2$, then $\xi+ \supp\chi_j^\e\subset[a_{j,\e}-2\e,a_{j,\e}+2\e]$, and, since $f'\in {\rm C}^{s-1-1/p}(\R)$, Lemma~\ref{L:MP0}  and the properties defining~$F_\e$ lead to
\begin{equation}\label{2E22}
\begin{aligned}
\|T_{2}(\xi)\|_{p'}&=  \| \chi_j^\e B_{n,m}(f,\ldots,f)[f,\ldots,f, F_\e-f'(x_j^\e)\id_{\R},\chi_j^\e(\varphi-\varphi_\delta) -\tau_\xi (\chi_j^\e(\varphi-\varphi_\delta))]\|_{p'}\\[1ex]
&\leq C\|f'-f'(x_j^\e)\|_{L_\infty((a_{j,\e}-2\e,a_{j,\e}+2\e))}\|\chi_j^\e(\varphi-\varphi_\delta) -\tau_\xi (\chi_j^\e(\varphi-\varphi_\delta))\|_{p'}\\[1ex]
&\leq \frac{\nu}{24  }\|\chi_j^\e(\varphi-\varphi_\delta) -\tau_\xi (\chi_j^\e(\varphi-\varphi_\delta))\|_{p'},
\end{aligned}
\end{equation}
provided that $\e$ is sufficiently small. 

Combining \eqref{2E1}-\eqref{2E22}, we conclude that if $\e$ is sufficiently small, then
 \begin{equation}\label{E2}
 \begin{aligned}
{[T_{j,\e}^1[\varphi]]_{W^{2-s}_{p'}}}&\leq 3\Big(\sum_{j=1}^3\int_{\R}\frac{\|T_i(\xi)\|_{p'}^{p'}}{|\xi|^{1+(2-s)p'}}\, d\xi\Big)^{1/p'}\\[1ex]
&\leq K \delta^{\rho+\wt s-s} \|\varphi\|_{W^{2-s}_{p'}}(1+\|\chi_j^\e\|_{W^{3-s}_{p'}}+\|f\|_{W^{s}_p})+\frac{\nu}{8}\|\chi_j^\e(\varphi-\varphi_\delta)\|_{W^{2-s}_{p'}}.
 \end{aligned}
 \end{equation}
Invoking  \eqref{MES} and Lemma~\ref{L:MOL}~(i), the estimates  \eqref{E1} and \eqref{E2}  lead us to
 \begin{align*}
 \|T_{j,\e}^1[\varphi]\|_{W^{2-s}_{p'}}&\leq \frac{\nu}{2}\|\varphi\|_{W^{2-s}_{p'}}+K\delta^{\rho+\wt s-s}\|\varphi\|_{W^{2-s}_{p'}}.
 \end{align*}
 We now chose $\delta=\delta(\e)\in(0,1]$ sufficiently small to ensure that $K\delta^{\rho+\wt s-s}\leq  \nu/2.$
This choice together with \eqref{E0} shows that the estimates in \eqref{Eq:EST} hold true and the proof is complete.
\end{proof}
\pagebreak

The next lemma is the second main result of this section and describes how to freeze the kernels at infinity. 
\begin{lemma}\label{L:AL4} 
Let $n,\, m \in \N$, $p\in(1,2],$ $s\in(1+1/p,2)$, $\rho\in(0,(s-1-1/p)/2)$, ${\nu\in(0,\infty)}$, 
and~$f\in W^s_p(\R)$ be given.
For sufficiently small $\e\in(0,1)$   there exists a constant $K$ that depends on $\e,$ $ n, $ $ m,$ $  s, $ $ p,$ $ \rho, $ and $\|f\|_{W^s_p}$  such that
  \begin{equation}\label{LCN0}
\Big\|\pi_N^\e B_{0,m}^0(f)[ \oo]-B_{0,0}[\pi_N^\e \oo]\Big\|_{W^{ s-2}_p}\leq \nu \|\pi_N^\e \oo\|_{W^{ s-2}_p}+K\| \oo\|_{W^{   s-2-\rho}_p}
 \end{equation}
 and
 \begin{equation}\label{LCN}
\|\pi_N^\e B_{n,m}^0(f)[ \oo]\|_{W^{ s-2}_p}\leq \nu \|\pi_N^\e \oo\|_{W^{ s-2}_p}+K\| \oo\|_{W^{s-2-\rho}_p},\quad n\geq 1,
 \end{equation}
for all $\oo\in W^{s-2}_p(\R)$.
\end{lemma}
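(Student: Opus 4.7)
The plan is to follow the same blueprint as Lemma~\ref{L:AL3}, with the \emph{frozen} value of $f'$ being $0$ instead of $f'(x_j^\e)$. This substitution is legitimate because $\chi_N^\e$ is supported in $\{|x|>1/\e-\e\}$, while both $f$ and $f'$ belong to $L_p(\R)$ and are uniformly continuous (via the embeddings $W^s_p(\R)\hookrightarrow {\rm C}^{s-1/p}(\R)$ and $W^{s-1}_p(\R)\hookrightarrow {\rm C}^{s-1-1/p}(\R)$), hence both vanish at infinity. Consequently $\|\chi_N^\e f\|_\infty$ and $\|\chi_N^\e f'\|_\infty$ tend to $0$ as $\e\to 0$.

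For \eqref{LCN0}, the relation $\chi_N^\e\pi_N^\e=\pi_N^\e$ allows me to decompose
\begin{align*}
\pi_N^\e B_{0,m}^0(f)[\oo]-B_{0,0}[\pi_N^\e\oo] &= \bigl(\chi_N^\e B_{0,0}[\pi_N^\e\oo]-B_{0,0}[\chi_N^\e\pi_N^\e\oo]\bigr)\\
&\quad -\chi_N^\e\bigl(B_{0,m}^0(f)[\pi_N^\e\oo]-\pi_N^\e B_{0,m}^0(f)[\oo]\bigr)\\
&\quad +\chi_N^\e\bigl(B_{0,m}^0(f)[\pi_N^\e\oo]-B_{0,0}[\pi_N^\e\oo]\bigr).
\end{align*}
The first two lines are commutator terms and, since $\pi_N^\e$ and $\chi_N^\e$ are pointwise multipliers on the relevant spaces, Lemma~\ref{L:AL1} bounds them in $W^{s-2}_p(\R)$ by $K\|\oo\|_{W^{s-2-\rho}_p}$. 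For the third line I would invoke \eqref{spr3} with $\wt a_i=0$ and $a_i=f$, yielding
\[
B_{0,m}^0(f)[\cdot]-B_{0,0}[\cdot]=-\sum_{i=1}^m B_{2,m+1}(\underbrace{0,\ldots,0}_{i},\underbrace{f,\ldots,f}_{m-i+1})[f,f,\cdot],
\]
so each $\chi_N^\e$-localized summand has the form $\chi_N^\e B_{2,m+1}(\ldots)[f-0\cdot\id_\R,f,\pi_N^\e\oo]$, i.e.\ one of the ``$b$''-slots has been expressed as $f$ minus its frozen value at infinity. For \eqref{LCN} with $n\geq 1$ the argument is even more direct: write $\pi_N^\e B_{n,m}^0(f)[\oo]=\chi_N^\e B_{n,m}^0(f)[\pi_N^\e\oo]+(\text{commutator})$, dispatch the commutator via Lemma~\ref{L:AL1}, and note that the first term equals $\chi_N^\e B_{n,m}(f,\ldots,f)[f-0\cdot\id_\R,f,\ldots,f,\pi_N^\e\oo]$.

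Both reductions funnel into an ``at-infinity'' variant of Lemma~\ref{L:AL2} in which the frozen value $f'(x_j^\e)$ is replaced by $0$ and $j=N$. The proof of this variant runs essentially verbatim: dualize against $\varphi\in W^{2-s}_{p'}(\R)$, mollify $\varphi_\delta=\varphi*\eta_\delta$, split the resulting adjoint expression into a smoothed part (estimated in $W^{2-\wt s}_{p'}$ via Lemma~\ref{L:MP2} with $r=2-\wt s$) and a $\varphi-\varphi_\delta$ remainder (controlled through the multilinear kernel bounds of Lemmas~\ref{L:MP0} and~\ref{L:MP1} combined with \eqref{MO2}), and then tune $\delta=\delta(\e)$ at the end. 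The main obstacle---and the only substantive departure from Lemma~\ref{L:AL2}---is the replacement of the Hölder-based smallness $\|\chi_j^\e(f'-f'(x_j^\e))\|_\infty\leq C\e^{s-1-1/p}$ used in \eqref{2E22} by the vanishing-at-infinity smallness $\|\chi_N^\e f'\|_\infty=o(1)$ as $\e\to 0$. This is precisely what delivers the coefficient $\nu/24$ in the analog of \eqref{2E22}, and all remaining steps transfer without modification.
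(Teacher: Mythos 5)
Your proof is correct and follows essentially the same route as the paper: the same $T_a+T_b$ decomposition (your three-line split regroups to exactly the paper's two terms), Lemma~\ref{L:AL1} for the commutators, \eqref{spr3} to express the leading term via operators with a ``$b$''-slot equal to $f-0\cdot\id_\R$, and then the ``at-infinity variant of Lemma~\ref{L:AL2}'' you describe is precisely the paper's Lemma~\ref{L:AL5}, with the key departure being the replacement of the Hölder oscillation bound by the vanishing of $f'$ at infinity, as you correctly identify.
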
 
\begin{proof}
Similarly as in the proof of Lemma~\ref{L:AL3} we write
\begin{align*}
\pi_N^\e B_{0,m}^0(f)[\oo]-B_{0,0}[\pi_N^\e \oo]=T_{a}+ T_{b},
\end{align*}
 where 
 \begin{align*}
T_{a}&:=(\chi_N^\e B_{0,0}[\pi_N^\e \oo]- B_{0,0}[\chi_N^\e(\pi_N^\e \oo)])-\chi_N^\e( B_{0,m}^0 (f)[\pi_N^\e \oo]-  \pi_N^\e B_{0,m}^0 (f)[ \oo  ]),\\[1ex]
T_{b}&:=  \chi_N^\e( B_{0,m}^0 (f)[\pi_N^\e \oo]-  B_{0,0}[\pi_N^\e \oo]).
\end{align*}
 Since $\pi_j^\e$ is a pointwise multiplier for $W^{s-2-\rho}_p(\R)$ and $\chi_j^\e$ is a pointwise multiplier for~$W^{s-2}_p(\R)$, we get in view of Lemma~\ref{L:AL1} that
\begin{equation}\label{C2aN}
\|T_{a}\|_{W^{ s-2}_p}\leq K\|\oo\|_{W^{ s-2-\rho}_p}.
\end{equation}
With respect to $T_b$ we infer from \eqref{spr3} that
\begin{align*}
T_{b}&=- \chi_N^\e\sum_{i=1}^{m} B_{2,i}^0(f)[\pi_N^\e \oo].
\end{align*} 
If $\e$ sufficiently small, it follows from Lemma~\ref{L:AL5} that
\begin{align}\label{C2bN}
\|T_b\|_{W^{s-2}_p}\leq \nu \|\pi_N^\e \oo\|_{W^{ s-2}_p}+K\| \oo\|_{W^{  s-2-\rho}_p}.
\end{align}
The claim \eqref{LCN0} follows now from \eqref{C2aN} and  \eqref{C2bN}. 
Finally, the assertion \eqref{LCN} is obtained  by arguing as in the proof of \eqref{LCN0}.
\end{proof}

Lemma~\ref{L:AL5} below provides the crucial estimate in the proof of Lemma~\ref{L:AL4}.
\begin{lemma}\label{L:AL5} 
Let $n,\, m \in \N$, $n\geq 1$, $p\in(1,2],$ $ 1+1/p<\wt s<s<2$,  $\nu\in(0,\infty)$, 
and~${f\in W^s_p(\R)}$ be given.
For sufficiently small $\e\in(0,1)$, there exists a positive constant $K=K(\e,\, n,\, m,\, s,\,\wt s,\,\|f\|_{W^s_p})$  such that
 \begin{equation}\label{LCwN}
\|\chi_N^\e B_{n,m}^0(f)[\pi_N^\e \oo]\|_{W^{ s-2}_p}\leq \nu \|\pi_N^\e \oo\|_{W^{ s-2}_p}+K\| \oo\|_{W^{ \wt s-2}_p}
 \end{equation}
for all    $\oo\in W^{s-2}_p(\R)$.
\end{lemma}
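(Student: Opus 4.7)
The proof follows the blueprint of Lemma~\ref{L:AL2}, with one structural adjustment reflecting the fact that $\pi_N^\e$ lives at infinity rather than around a finite point~$x_j^\e$: the smallness of the relevant coefficients must now come from the decay of $f$ and $f'$ at infinity (using $W^{s-1}_p(\R)\hookrightarrow {\rm C}_0^{s-1-1/p}(\R)$, which is valid since $s>1+1/p$) rather than from the H\"older continuity of $f'$ at the finite point~$x_j^\e$.

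\textbf{Duality reduction.} Since the $L_2$-adjoint of $B_{n,m}^0(f)$ is $-B_{n,m}^0(f)$ and $\chi_N^\e\pi_N^\e=\pi_N^\e$, for smooth compactly supported $\oo$ and $\varphi$ we have
\[
\langle\chi_N^\e B_{n,m}^0(f)[\pi_N^\e\oo]\,|\,\varphi\rangle_{W^{s-2}_p\times W^{2-s}_{p'}}=-\langle \pi_N^\e\oo\,|\,T_{N,\e}[\varphi]\rangle_{W^{s-2}_p\times W^{2-s}_{p'}},
\]
with $T_{N,\e}[\varphi]:=\chi_N^\e B_{n,m}^0(f)[\chi_N^\e\varphi]$. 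It therefore suffices to decompose $T_{N,\e}=T_{N,\e}^1+T_{N,\e}^2$ with $\|T_{N,\e}^1[\varphi]\|_{W^{2-s}_{p'}}\le\nu\|\varphi\|_{W^{2-s}_{p'}}$ and $\|T_{N,\e}^2[\varphi]\|_{W^{2-\wt s}_{p'}}\le K\|\varphi\|_{W^{2-s}_{p'}}$ for sufficiently small~$\e$, and to conclude via the same density argument as in the derivation of~\eqref{LCw}.

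\textbf{Mollification and the seminorm decomposition.} Introducing a mollifier $\{\eta_\delta\}$, I set $T_{N,\e}^1[\varphi]:=\chi_N^\e B_{n,m}^0(f)[\chi_N^\e(\varphi-\varphi_\delta)]$ and $T_{N,\e}^2[\varphi]:=\chi_N^\e B_{n,m}^0(f)[\chi_N^\e\varphi_\delta]$. Lemma~\ref{L:MP2} (with $r=2-\wt s\in(0,1-1/p)$) combined with~\eqref{MO1} gives $\|T_{N,\e}^2[\varphi]\|_{W^{2-\wt s}_{p'}}\le K\delta^{\wt s-s}\|\varphi\|_{W^{2-s}_{p'}}$, while Lemma~\ref{L:MP0} and~\eqref{MO2} yield $\|T_{N,\e}^1[\varphi]\|_{p'}\le C\delta^{2-s}\|\varphi\|_{W^{2-s}_{p'}}$. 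What remains is the seminorm $[T_{N,\e}^1[\varphi]]_{W^{2-s}_{p'}}$, which via~\eqref{spr3} I decompose as $T_{N,\e}^1[\varphi]-\tau_\xi T_{N,\e}^1[\varphi]=T_1(\xi)+T_2(\xi)+T_3(\xi)$ exactly as in Lemma~\ref{L:AL2}: $T_1$ carries the factor $\chi_N^\e-\tau_\xi\chi_N^\e$ (which, for $|\xi|$ small, is compactly supported in a bounded annulus around the transition zone of $\chi_N^\e$ and hence lies in $W^1_{p'}(\R)$), $T_3$ carries the factor $f-\tau_\xi f$ coming from freezing the kernel, and $T_2$ carries the translation difference $h(\xi):=\chi_N^\e(\varphi-\varphi_\delta)-\tau_\xi(\chi_N^\e(\varphi-\varphi_\delta))$. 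The contributions of $T_1$ and $T_3$ after integration against $|\xi|^{-1-(2-s)p'}\,d\xi$ are controlled as in Lemma~\ref{L:AL2} via Lemma~\ref{L:MP0}--\eqref{MO2} for $T_1$ and via Lemma~\ref{L:MP1} (with $s=\wt s$ and $r=2-\wt s-\rho$ for small $\rho>0$) together with Lemma~\ref{L:AL0} for $T_3$, both yielding a bound $K\delta^{\rho+\wt s-s}\|\varphi\|_{W^{2-s}_{p'}}$.

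\textbf{Freezing at infinity (the main obstacle).} The $T_2$-contribution is the crux. For $|\xi|\ge\e/2$ a direct application of Lemma~\ref{L:MP0} together with $\|h(\xi)\|_{p'}\le C\|\varphi-\varphi_\delta\|_{p'}\le C\delta^{2-s}\|\varphi\|_{W^{2-s}_{p'}}$ integrates to $K\delta^{2-s}\|\varphi\|_{W^{2-s}_{p'}}$. For $|\xi|<\e/2$, I would introduce a Lipschitz function $F_{N,\e}\colon\R\to\R$ that agrees with $f$ on $\{|x|>1/\e-2\e\}$ and extends linearly on the bounded interval $\{|x|\le 1/\e-2\e\}$, interpolating between the values $f(\pm(1/\e-2\e))$. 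Since $f\in W^s_p(\R)\hookrightarrow {\rm C}_0(\R)$ and $f'\in W^{s-1}_p(\R)\hookrightarrow {\rm C}_0^{s-1-1/p}(\R)$, both the interpolation slope and $\|f'\|_{L_\infty(\{|x|>1/\e-2\e\})}$ tend to zero as $\e\to 0$, so $\|F_{N,\e}'\|_\infty\to 0$. Because $\delta_{[x,y]}g$ depends on $g$ only at the two points $x$ and $x-y$, and because $h(\xi)$ is supported in $\{|x|>1/\e-3\e/2\}\subset\{|x|>1/\e-2\e\}$ when $|\xi|<\e/2$, the integrand of $B_{n,m}^0(f)[h(\xi)](x)$ coincides with that of $B_{n,m}^0(F_{N,\e})[h(\xi)](x)$ for every $x\in\supp\chi_N^\e$. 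Lemma~\ref{L:MP0} then yields $\|T_2(\xi)\|_{p'}\le C\|F_{N,\e}'\|_\infty^{\,n}\|h(\xi)\|_{p'}$, and since $n\ge 1$ integration against $|\xi|^{-1-(2-s)p'}\,d\xi$ produces a factor $\|F_{N,\e}'\|_\infty^n\cdot[\chi_N^\e(\varphi-\varphi_\delta)]_{W^{2-s}_{p'}}$, which is bounded by $\nu\|\varphi\|_{W^{2-s}_{p'}}/2$ for $\e$ small enough. Assembling all contributions and then balancing $\delta=\delta(\e)$ small enough to absorb $K\delta^{\wt s-s}$ completes the proof of~\eqref{LCwN}.
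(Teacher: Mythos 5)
Your proposal is correct and follows the paper's proof essentially verbatim: the same duality reduction, the same mollifier splitting $T_{\e}^1+T_{\e}^2$, the same three-term decomposition of $T_{\e}^1[\varphi]-\tau_\xi T_{\e}^1[\varphi]$ via \eqref{spr3}, and the same device of introducing the Lipschitz function $F_\e$ that equals $f$ outside $\{|x|\le 1/\e-2\e\}$ and is linear inside, with smallness of $\|F_\e'\|_\infty$ coming from $f,f'\in{\rm C}_0(\R)$. The only cosmetic differences are that you replace all $n$ of the $b$-arguments (and the $a$-arguments) by $F_{N,\e}$ whereas the paper replaces only one — both valid since $\delta_{[x,y]}f=\delta_{[x,y]}F_\e$ on the relevant support — and your slightly more conservative threshold $|\xi|<\e/2$ in place of the paper's $|\xi|<\e$.
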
 
\begin{proof}
Given $\varphi,\, \oo\in {\rm C}^\infty_0(\R)$, it  follows by arguing as in the proof of Lemma~\ref{L:AL2} that
 \begin{align*}
\langle\chi_N^\e B_{n,m}^0(f)[\pi_N^\e \oo]|\varphi\rangle_{W^{s-2}_p(\R)\times W^{2-s}_{p'}(\R)}
=-\langle  \pi_N^\e \oo|\chi_N^\e B_{n,m}^0(f)[\chi_N^\e\varphi]\rangle_{W^{s-2}_p(\R)\times W^{2-s}_{p'}(\R)}.
 \end{align*}
We write $\chi_N^\e B_{n,m}^0(f)[\chi_N^\e\varphi]=T_{\e}^1[\varphi]+T^2_{\e}[\varphi]$
where, for  sufficiently small $\e$, we have
 \begin{align}\label{Eq:ESTN}
 \|T_{\e}^1[\varphi]\| _{W^{2-s}_{p'}}\leq \nu\|\varphi\|_{W^{2-s}_{p'}}\qquad\text{and}\qquad \|T_{\e}^2[\varphi]\| _{W^{2-\wt s}_{p'}}\leq K \|\varphi\|_{W^{2-s}_{p'}}
 \end{align}
 for all $\varphi\in W^{2-s}_{p'}(\R).$
The estimates in \eqref{Eq:ESTN} together with the previous identity imply
  \begin{align*}
&\hspace{-0.5cm}|\langle\chi_N^\e B_{n,m}^0(f)[\pi_N^\e \oo]|\varphi\rangle_{W^{s-2}_p(\R)\times W^{2-s}_{p'}(\R)}|\\[1ex]
&\leq \|\pi_N^\e\oo\|_{W^{s-2}_p}\|T_{\e}^1[\varphi]\| _{W^{2-s}_{p'}}
+\|\pi_N^\e\oo\|_{W^{\wt s-2}_p}\|T_{\e}^2[\varphi]\| _{W^{2-\wt s}_{p'}}\\[1ex]
&\leq (\nu\|\pi_N^\e\oo\|_{W^{s-2}_p}+K\|\oo\| _{W^{\wt s-2}_{p}})\|\varphi\|_{W^{2-s}_{p'}},
 \end{align*}
 and \eqref{LCwN} follows.

Let again  $\{\eta_\delta\}_{\delta>0}$ be a mollifier and set 
  \begin{align*}
 T_{\e}^1[\varphi]=\chi_N^\e B_{n,m}^0(f)[\chi_N^\e(\varphi-\varphi_\delta)]\qquad\text{and}\qquad T_{\e}^2[\varphi]=  \chi_N^\e B_{n,m}^0(f)[\chi_N^\e\varphi_\delta],
  \end{align*}
  where $\varphi_\delta:=\varphi*\eta_\delta$ and with $\delta=\delta(\e)\in(0,1]$ which we fix later on.
 Taking advantage of Lemma~\ref{L:MP2} (with~${r=2-\wt s\in(0,1-1/p)}$) and of the fact that  $\chi_N^\e\varphi_\delta\in W^{2-\wt s}_{p'}(\R)$, we conclude that   $T_{\e}^2[\varphi]\in W^{2-\wt s}_{p'}(\R)$ 
  and together with \eqref{MO1} we obtain the   estimate
  \begin{align}\label{E0N}
  \|T_{\e}^2[\varphi]\|_{W^{2-\wt s}_{p'}}\leq K \|\chi_N^\e\varphi_\delta\|_{W^{2-\wt s}_{p'}}\leq K\delta^{\wt s -s}\|\varphi\|_{W^{2-s}_{p'}}.
  \end{align}
  We now estimate  $T_{\e}^1[\varphi].$ 
 Combining Lemma~\ref{L:MP0} and  \eqref{MO2}, we have
  \begin{align}\label{E1N}
  \|T_{\e}^1[\varphi]\|_{p'}\leq C \|\chi_N^\e(\varphi_\delta-\varphi)\|_{ p' }\leq C\|\varphi-\varphi_\delta\|_{p'}\leq C\delta^{2-s}\|\varphi\|_{W^{2-s}_{p'}},
  \end{align}
and it remains to consider  the seminorm $[T_{\e}^1[\varphi]]_{W^{2-s}_{p'}}$. 
Using \eqref{spr3}, we get  
\begin{align*}
T_{\e}^1[\varphi]-\tau_\xi T_{\e}^1[\varphi]=T_{1}(\xi)+T_{2}(\xi)+ T_{3}(\xi),\qquad \xi\in\R,
\end{align*}
where 
\begin{align*}
T_{1}(\xi)&:=(\chi_N^\e-\tau_\xi \chi_N^\e)\tau_\xi (B_{n,m}^0(f)[\chi_N^\e(\varphi-\varphi_\delta)]),\\[1ex]
T_{2}(\xi)&:=  \chi_N^\e B_{n,m}^0(f)[ \chi_N^\e(\varphi-\varphi_\delta) -\tau_\xi (\chi_N^\e(\varphi-\varphi_\delta))],\\[1ex]
T_{3}(\xi)&:= \chi_N^\e\sum_{j=1}^{n}B_{n,m}(f,\ldots,f)[\underset{(j-1)-{\rm times}}{\underbrace{\tau_\xi f,\ldots,\tau_\xi f}},f-\tau_\xi f,f,\ldots, f,\tau_\xi (\chi_N^\e(\varphi-\varphi_\delta))]\\[1ex]
&\hspace{0,45cm}+\chi_N^\e\sum_{j=1}^m B_{n+2,m+1}^j [\tau_\xi f,\ldots,\tau_\xi f,\tau_\xi f+f,\tau_\xi f-f,\tau_\xi(\chi_N^\e(\varphi-\varphi_\delta))],
\end{align*}
with $B_{n+2,m+1}^j$ as defined in the proof of Lemma~\ref{L:AL2}.
Arguing as in Lemma~\ref{L:AL2}, we obtain
\begin{equation}\label{2E1N}
\|T_{1}(\xi)\|_{p'}\leq  C\delta^{2-s} \|\chi_N^\e-\tau_\xi\chi_N^\e\|_{W^1_{p'}}\|\varphi\|_{W^{2-s}_{p'}}
\end{equation}
and, for some fixed $\rho\in(s-\wt s,\min\{2-\wt s,2(s-\wt s)\})$,
\begin{equation}\label{2E2N}
\| T_{3}(\xi)\|_{p'}\leq K\delta^{\rho+\wt s-s}\|f-\tau_\xi f\|_{W_{p'}^{2s-1-2/p}}\|\varphi\|_{W^{2-s}_{p'}}.
\end{equation}

In order to estimate $T_{2}(\xi),$ let $F_\e$ denote the Lipschitz continuous function equal to~$f$ on the set $\{|x|\geq 1/\e-2\e\}$ and  which is linear in the interval $\{|x|\leq 1/\e-2\e\}$.
If $|\xi|\geq\e,$  Lemma~\ref{L:MP0} and~\eqref{MO2} yield
\begin{align}\label{2E21N}
\|T_{2}(\xi)\|_{p'}\leq   C\|\varphi-\varphi_\delta\|_{p'}\leq C\delta^{2-s}\|\varphi\|_{W^{2-s}_{p'}} .
\end{align}
If $|\xi|<\e$,  we note that   $\xi+ \supp\chi_N^\e\subset \{|x|\geq 1/\e-2\e\}$.  
Lemma \ref{L:MP0}, the definition of~$F_\e$, and the observation that
 $\|F_\e'\|_{\infty}\leq 2\e\|f\|_\infty+\|f'\|_{L_\infty(\{|x|\geq 1/\e-2\e\})}$ (which holds if $\e$ is sufficiently small), then lead to
\begin{equation}\label{2E22N}
\begin{aligned}
\|T_{2}(\xi)\|_{p'}&=  \| \chi_N^\e B_{n,m}(f,\ldots,f)[f,\ldots,f, F_\e,\chi_N^\e(\varphi-\varphi_\delta) -\tau_\xi (\chi_N^\e(\varphi-\varphi_\delta))]\|_{p'}\\[1ex]
&\leq C\|F_\e' \|_{\infty}\|\chi_N^\e(\varphi-\varphi_\delta) -\tau_\xi (\chi_N^\e(\varphi-\varphi_\delta))\|_{p'}\\[1ex]
&\leq \frac{\nu}{24  }\|\chi_N^\e(\varphi-\varphi_\delta) -\tau_\xi (\chi_N^\e(\varphi-\varphi_\delta))\|_{p'},
\end{aligned}
\end{equation}
provided that $\e$ is sufficiently small. To obtain the last inequality we have taken advantage of the fact that $f'\in W^{s-1}_p(\R)$ vanishes at infinity. 

From \eqref{2E1N}-\eqref{2E22N}, we conclude, for $\e$  sufficiently small, that
 \begin{equation}\label{E2N}
\hspace{-0.15cm}{[T_{\e}^1[\varphi]]_{W^{2-s}_{p'}}}\leq K \delta^{\rho+\wt s-s} \|\varphi\|_{W^{2-s}_{p'}}(1+\|1-\chi_N^\e\|_{W^{3-s}_{p'}}+\|f\|_{W^{s}_p})+\frac{\nu}{8}\|\chi_N^\e(\varphi-\varphi_\delta)\|_{W^{2-s}_{p'}}.
 \end{equation}
Combining \eqref{MES},  Lemma~\ref{L:MOL}~(i), \eqref{E1N}, and \eqref{E2N}  we get
 \begin{align*}
 \|T_{\e}^1[\varphi]\|_{W^{2-s}_{p'}}&\leq \frac{\nu}{2}\|\varphi\|_{W^{2-s}_{p'}}+K\delta^{\rho+\wt s-s}\|\varphi\|_{W^{2-s}_{p'}}.
 \end{align*}
 Choosing now $\delta=\delta(\e)$ sufficiently small to ensure that $K\delta^{\rho+\wt s-s}\leq  \nu/2,$
we obtain, together with  \eqref{E0N}, the desired estimates \eqref{Eq:ESTN}  and the proof is complete.
\end{proof}

 \section{Estimates for some pointwise multipliers}\label{Sec:b}
In  this appendix  we present the proofs of the estimates \eqref{EQ:Mul1}-\eqref{EQ:Mul2}. 
The estimate \eqref{EQ:Mul2} is used in the proof of Theorem ~\ref{T:AP} and  \eqref{EQ:Mul1} is an important argument when establishing~\eqref{EQ:Mul2}.

\begin{proof}[Proof of  \eqref{EQ:Mul1}]
Since $W^{r}_p(\R)\hookrightarrow {\rm C}(\R)$, we have
\begin{align}\label{Ea1}
\|gh\|_{p'}\leq \|g\|_\infty\|h\|_{p'},
\end{align}
and it remains to estimate the term
\begin{align*}
[gh]_{W^{1-r}_{p'}}^{p'}=\int_\R\frac{\|gh-\tau_\xi(gh)\|_{p'}^{p'}}{|\xi|^{1+(1-r)p'}}\, d\xi\leq 2^{p'}\Big(\|g\|_\infty^{p'}\int_\R\frac{\|h-\tau_\xi h\|_{p'}^{p'}}{|\xi|^{1+(1-r)p'}}\, d\xi
+ \int_\R\frac{\|(g-\tau_\xi g)\tau_\xi h\|_{p'}^{p'}}{|\xi|^{1+(1-r)p'}}\, d\xi\Big).
\end{align*}
According to \cite[Theorem~4.1]{Am91}, the multiplication
\[
[(g,h)\mapsto gh]: W^{2(r-1/p)}_{p'}(\R)\times W^{1-r-\rho}_{p'}(\R)\to L_{p'}(\R) 
\]
is continuous, hence
\begin{align*}
[gh]_{W^{1-r}_{p'}}^{p'}\leq 2^{p'}\|g\|_\infty^{p'}\|h\|_{W^{1-r}_{p'}}^{p'}+C\|h\|_{W^{1-r-\rho}_{p'}}^{p'}\int_\R\frac{\|g-\tau_\xi g\|_{W^{2(r-1/p)}_{p'}}^{p'}}{|\xi|^{1+(1-r)p'}}\, d\xi.
\end{align*} 
  Lemma~\ref{L:AL0} (with $t'=2(r-1/p)$ and $t=r+1-2/p$) yields
\begin{align*}
\int_\R\frac{\|g-\tau_\xi g\|_{W^{2(r-1/p)}_{p'}}^{p'}}{|\xi|^{1+(1-r)p'}}\, d\xi \leq \|g\|_{W^{r+1-2/p}_{p'}}^{p'}\leq C \|g\|_{W^{r}_p}^{p'}.
\end{align*} 
We thus conclude that
\begin{align}\label{Ea2}
[gh]_{W^{1-r}_{p'}}^{p'}\leq2^{p'}\|g\|_\infty^{p'}\|h\|_{W^{1-r}_{p'}}^{p'}+C \|g\|_{W^{r}_p}^{p'}\|h\|_{W^{1-r-\rho}_{p'}}^{p'},
\end{align}
and \eqref{Ea1} together with \eqref{Ea2} lead to the desired claim.
\end{proof}

We are now in a position to prove  \eqref{EQ:Mul2}.

\begin{proof}[Proof of \eqref{EQ:Mul2}]
Given $h,\, \psi\in {\rm C}^\infty_0(\R)$, we have
\[
\langle \varphi h|\psi\rangle_{W^{r-1}_p(\R)\times W^{1-r}_{p'}(\R)}=\langle  h|\varphi \psi\rangle_{W^{r-1}_p(\R)\times W^{1-r}_{p'}(\R)}
=\langle  h|T_1[\psi]+T_2[\psi]\rangle_{W^{r-1}_p(\R)\times W^{1-r}_{p'}(\R)},
\]
where $T_1[\psi]\in W^{1-r}_{p'}(\R)$ and $ T_2[\psi]\in W^{1- r+\rho}_{p'}(\R)$ are defined in~\eqref{tipsi} and satisfy
\begin{align}\label{TD}
\|T_1[\psi]\|_{W^{1-r}_{p'}}\leq 5\|\varphi\|_\infty\|\psi\|_{W^{1-r}_{p'}}\quad\text{and}\quad 
\|T_2[\psi]\|_{W^{1-r+\rho}_{p'}}\leq C\frac{ \|\varphi\|_{W^{r}_{p}}^{1+2r/\rho}}{\|\varphi\|_{\infty}^{2r/\rho}}\|\psi\|_{W^{1-r}_{p'}}.
\end{align}
Having established \eqref{TD}, we get 
\begin{align*}
|\langle \varphi h|\psi\rangle|_{W^{r-1}_p(\R)\times W^{1-r}_{p'}(\R)}&\leq \| h\|_{W^{r-1}_p}\|T_1[\psi]\|_{W^{1-r}_{p'}}+\| h\|_{W^{ r-1-\rho}_p}\|T_2[\psi]\|_{W^{1- r+\rho}_{p'}}\\[1ex]
&\leq  \Big(5\|\varphi\|_\infty \|h\|_{W^{r-1}_p}+C\frac{ \|\varphi\|_{W^{r}_{p}}^{1+2r/\rho}}{\|\varphi\|_{\infty}^{2r/\rho}}\|h\|_{W^{r-1-\rho}_p}\Big)\|\psi\|_{W^{1-r}_{p'}},
\end{align*}
and the estimate \eqref{EQ:Mul2}  follows.
The functions in \eqref{TD} are defined by
\begin{align}\label{tipsi}
T_1[\psi]:=\varphi(\psi-\psi_\delta)\quad\text{and}\quad   T_2[\psi]:=\varphi \psi_\delta,
\end{align}
where $\{\eta_\delta\}_{\delta>0}$ is a mollifier, $\psi_\delta=\psi*\eta_\delta$, and $\delta\in(0,1]$ is chosen below.
Combining~\eqref{EQ:Mul1}, Lemma~\ref{L:MOL}~(i), and~\eqref{MO2}, we get 
\begin{align*}
\|T_1[\psi]\|_{W^{1-r}_{p'}}&\leq 2\|\varphi\|_\infty \|\psi-\psi_\delta\|_{W^{1-r}_{p'}}+C\|\varphi\|_{W^{r}_p}\|\psi-\psi_\delta\|_{W^{1-r-\rho}_{p'}} \\[1ex]
&\leq (4\|\varphi\|_\infty  +C\delta^{\rho}\|\varphi\|_{W^{r}_p})\|\psi\|_{W^{1-r}_{p'}}.
\end{align*}
After eventually choosing $C$ to be larger than the norm of the embedding $W^r_p(\R)\hookrightarrow L_\infty(\R),$ we  set $\delta= (\|\varphi\|_\infty/(C\|\varphi\|_{W^r_p}))^{1/\rho}\in(0,1]$, and obtain that
\begin{align*}
\|T_1[\psi]\|_{W^{1-r}_{p'}}& \leq 5\|\varphi\|_\infty \|\psi\|_{W^{1-r}_{p'}}.
\end{align*}

With respect to $T_2[\psi]$, we note that since $\varphi\in W^{r+1-2/p}_{p'}(\R)$ and $\psi_\delta\in W^{r+1}_{p'}(\R)$, it holds that
$T_2[\psi]\in W^{r+1-2/p}_{p'}(\R)\hookrightarrow  W^{1-r+\rho}_{p'}(\R)$, cf. \eqref{adMP}, and together with~\eqref{adMP} and~\eqref{MO1} we get
\begin{align*}
\|T_2[\psi]\|_{W^{1-r+\rho}_{p'}}\leq C\|\psi_\delta\|_{W^{r+1}_{p'}}\|\varphi\|_{W^{r+1-2/p}_{p'}}\leq C  \delta^{-2r}\|\varphi\|_{W^{r}_{p}}\|\psi\|_{W^{1-r}_{p'}}
\leq C\frac{ \|\varphi\|_{W^{r}_{p}}^{1+2r/\rho}}{\|\varphi\|_{\infty}^{2r/\rho}} \|\psi\|_{W^{1-r}_{p'}}.
\end{align*}
Hence, both estimates in \eqref{TD} hold true and the proof is complete.
\end{proof}

\section*{Acknowledgement}
The authors were partially supported by the RTG 2339 ''Interfaces, Complex Structures, and Singular Limits''
of the German Science Foundation (DFG). The support is gratefully acknowledged.

\bibliographystyle{siam}
\bibliography{AnBo}

\begin{thebibliography}{10}

\bibitem{AB12}
{\sc H.~Abels}, {\em Pseudodifferential and singular integral operators}, De
  Gruyter Graduate Lectures, De Gruyter, Berlin, 2012.
\newblock An introduction with applications.

\bibitem{AbMa20x}
{\sc H.~Abels and B.-V. Matioc}, {\em {Well-posedness of the {M}uskat problem\
  in subcritical $L_p$-Sobolev spaces}},  (2020).
\newblock arXiv:2003.07656.

\bibitem{Am86}
{\sc H.~Amann}, {\em {Quasilinear parabolic systems under nonlinear boundary
  conditions}}, Arch. Rational Mech. Anal., 92 (1986), pp.~153--192.

\bibitem{Am88}
\leavevmode\vrule height 2pt depth -1.6pt width 23pt, {\em {Dynamic theory of
  quasilinear parabolic equations. {I}. {A}bstract evolution equations}},
  Nonlinear Anal., 12 (1988), pp.~895--919.

\bibitem{Am91}
\leavevmode\vrule height 2pt depth -1.6pt width 23pt, {\em {Multiplication in
  {S}obolev and {B}esov spaces}}, in {Nonlinear analysis}, {Sc. Norm. Super. di
  Pisa Quaderni}, Scuola Norm. Sup., Pisa, 1991, pp.~27--50.

\bibitem{Am93}
\leavevmode\vrule height 2pt depth -1.6pt width 23pt, {\em {Nonhomogeneous
  linear and quasilinear elliptic and parabolic boundary value problems}}, in
  {Function spaces, differential operators and nonlinear analysis
  ({F}riedrichroda, 1992)}, vol.~133 of {Teubner-Texte Math.}, Teubner,
  Stuttgart, 1993, pp.~9--126.

\bibitem{Am95}
\leavevmode\vrule height 2pt depth -1.6pt width 23pt, {\em {Linear and
  Quasilinear Parabolic Problems. {V}ol. {I}}}, vol.~89 of {Monographs in
  Mathematics}, Birkh\"auser Boston, Inc., Boston, MA, 1995.
\newblock Abstract linear theory.

\bibitem{A14}
{\sc D.~M. Ambrose}, {\em {The zero surface tension limit of two-dimensional
  interfacial {D}arcy flow}}, J. Math. Fluid Mech., 16 (2014), pp.~105--143.

\bibitem{An90}
{\sc S.~B. Angenent}, {\em {Nonlinear analytic semiflows}}, Proc. Roy. Soc.
  Edinburgh Sect. A, 115 (1990), pp.~91--107.

\bibitem{BV11}
{\sc B.~V. Bazaliy and N.~Vasylyeva}, {\em The {M}uskat problem with surface
  tension and a nonregular initial interface}, Nonlinear Anal., 74 (2011),
  pp.~6074--6096.

\bibitem{AC11}
{\sc A.~Constantin and J.~Escher}, {\em {Analyticity of periodic traveling free
  surface water waves with vorticity}}, Ann. of Math., 173 (2011),
  pp.~559--568.

\bibitem{CGSV17}
{\sc P.~Constantin, F.~Gancedo, R.~Shvydkoy, and V.~Vicol}, {\em Global
  regularity for 2{D} {M}uskat equations with finite slope}, Ann. Inst. H.
  Poincar\'e Anal. Non Lin\'eaire, 34 (2017), pp.~1041--1074.

\bibitem{EEM09c}
{\sc M.~Ehrnström, J.~Escher, and B.-V. Matioc}, {\em {Steady-state fingering
  patterns for a periodic {M}uskat problem}}, Methods Appl. Anal., 20 (2013),
  pp.~33--46.

\bibitem{EMM12a}
{\sc J.~Escher, A.-V. Matioc, and B.-V. Matioc}, {\em {A generalized
  {R}ayleigh-{T}aylor condition for the {M}uskat problem}}, Nonlinearity, 25
  (2012), pp.~73--92.

\bibitem{EMM12}
\leavevmode\vrule height 2pt depth -1.6pt width 23pt, {\em {Modelling and
  analysis of the {M}uskat problem for thin fluid layers}}, J. Math. Fluid
  Mech., 14 (2012), pp.~267--277.

\bibitem{EM11a}
{\sc J.~Escher and B.-V. Matioc}, {\em {On the parabolicity of the {M}uskat
  problem: well-posedness, fingering, and stability results}}, Z. Anal.
  Anwend., 30 (2011), pp.~193--218.

\bibitem{EMW18}
{\sc J.~Escher, B.-V. Matioc, and C.~Walker}, {\em {The domain of parabolicity
  for the Muskat problem}}, Indiana Univ. Math. J., 67 (2018), pp.~679--737.

\bibitem{ES96}
{\sc J.~Escher and G.~Simonett}, {\em {Analyticity of the interface in a free
  boundary problem}}, Math. Ann., 305 (1996), pp.~439--459.

\bibitem{EsSi97}
\leavevmode\vrule height 2pt depth -1.6pt width 23pt, {\em Classical solutions
  for {H}ele-{S}haw models with surface tension}, Adv. Differential Equations,
  2 (1997), pp.~619--642.

\bibitem{FT03}
{\sc A.~Friedman and Y.~Tao}, {\em {Nonlinear stability of the {M}uskat problem
  with capillary pressure at the free boundary}}, Nonlinear Anal., 53 (2003),
  pp.~45--80.

\bibitem{GGL20}
{\sc F.~Gancedo, R.~Granero-Belinchón, and S.~Scrobogna}, {\em Surface tension
  stabilization of the {Rayleigh-Taylor} instability for a fluid layer in a
  porous medium}, Annales de l'Institut Henri Poincaré C, Analyse non
  linéaire,  (2020).

\bibitem{GS19}
{\sc R.~Granero-Belinch\'{o}n and S.~Scrobogna}, {\em Asymptotic models for
  free boundary flow in porous media}, Phys. D, 392 (2019), pp.~1--16.

\bibitem{HTY97}
{\sc J.~Hong, Y.~Tao, and F.~Yi}, {\em {Muskat problem with surface tension}},
  J. Partial Differential Equations, 10 (1997), pp.~213--231.

\bibitem{MBV18}
{\sc B.-V. Matioc}, {\em {Viscous displacement in porous media: the Muskat
  problem in 2D}}, Trans. Amer. Math. Soc., 370 (2018), pp.~7511--7556.

\bibitem{MBV19}
\leavevmode\vrule height 2pt depth -1.6pt width 23pt, {\em {The Muskat problem
  in two dimensions: equivalence of formulations, well-posedness, and
  regularity results}}, Anal. PDE, 12 (2019), pp.~281--332.

\bibitem{MBV20}
\leavevmode\vrule height 2pt depth -1.6pt width 23pt, {\em Well-posedness and
  stability results for some periodic {M}uskat problems}, J. Math. Fluid Mech.,
  22 (2020), pp.~Art. 31, 45.

\bibitem{MP20x}
{\sc B.-V. Matioc and G.~Prokert}, {\em {Two-phase Stokes flow by capillarity
  in full 2D space: an approach via hydrodynamic potentials}},  (2020).
\newblock arXiv:2003.14010, to appear in Proc. Roy. Soc. Edinburgh Sect. A.

\bibitem{MW20}
{\sc B.-V. Matioc and C.~Walker}, {\em On the principle of linearized stability
  in interpolation spaces for quasilinear evolution equations}, Monatshefte
  für Mathematik, 191 (2020), pp.~615--634.

\bibitem{Mu34}
{\sc M.~Muskat}, {\em {Two fluid systems in porous media. The encroachment of
  water into an oil sand}}, Physics, 5 (1934), pp.~250--264.

\bibitem{Ngu20}
{\sc H.~Q. Nguyen}, {\em On well-posedness of the {M}uskat problem with surface
  tension}, Adv. Math., 374 (2020), p.~107344.

\bibitem{NF20x}
{\sc H.~Q. Nguyen and P.~T. Flynn}, {\em {The vanishing surface tension limit
  of the Muskat problem}},  (2020).
\newblock arXiv:2020.10473.

\bibitem{PS16x}
{\sc J.~Prüss and G.~Simonett}, {\em {On the Muskat flow}}, Evol. Equ. Control
  Theory, 5 (2016), pp.~631--645.

\bibitem{PSS15}
{\sc J.~Pr\"uss, Y.~Shao, and G.~Simonett}, {\em {On the regularity of the
  interface of a thermodynamically consistent two-phase {S}tefan problem with
  surface tension}}, Interfaces Free Bound., 17 (2015), pp.~555--600.

\bibitem{PS16}
{\sc J.~Pr\"uss and G.~Simonett}, {\em Moving Interfaces and Quasilinear
  Parabolic Evolution Equations}, vol.~105 of Monographs in Mathematics,
  Birkh\"auser/Springer, [Cham], 2016.

\bibitem{PrS18}
{\sc J.~Pr\"{u}ss and G.~Simonett}, {\em The {V}erigin problem with and without
  phase transition}, Interfaces Free Bound., 20 (2018), pp.~107--128.

\bibitem{PSW19}
{\sc J.~Pr\"{u}ss, G.~Simonett, and M.~Wilke}, {\em The {R}ayleigh-{T}aylor
  instability for the {V}erigin problem with and without phase transition},
  NoDEA Nonlinear Differential Equations Appl., 26 (2019), pp.~Art. 18, 35.

\bibitem{Tao97}
{\sc Y.~Tao}, {\em Classical solution of {V}erigin problem with surface
  tension}, Chinese Ann. Math. Ser. B, 18 (1997), pp.~393--404.
\newblock A Chinese summary appears in Chinese Ann. Math. Ser. A {{\bf{1}}8}
  (1997), no. 4, 528.

\bibitem{To17}
{\sc S.~Tofts}, {\em On the existence of solutions to the {M}uskat problem with
  surface tension}, J. Math. Fluid Mech., 19 (2017), pp.~581--611.

\bibitem{Tr83}
{\sc H.~Triebel}, {\em {Theory of Function Spaces}}, vol.~38 of {Mathematik und
  ihre Anwendungen in Physik und Technik [Mathematics and its Applications in
  Physics and Technology]}, Akademische Verlagsgesellschaft Geest \& Portig
  K.-G., Leipzig, 1983.

\end{thebibliography}
\end{document}